  \crefname{section}{Section}{Sections}
  \crefname{figure}{Figure}{Figures}
  \crefname{theorem}{Theorem}{Theorems}
  \crefname{lemma}{Lemma}{Lemmas}
  \crefname{proposition}{Proposition}{Propositions}  
  \crefname{corollary}{Corollary}{Corollaries}
  \crefname{definition}{Definition}{Definitions}
  \crefname{example}{Example}{Examples}
  \crefname{remark}{Remark}{Remarks}
\newtheorem{theorem}{Theorem}[section]
\newtheorem{lemma}[theorem]{Lemma}
\newtheorem{proposition}[theorem]{Proposition}
\newtheorem{corollary}[theorem]{Corollary}
\newtheorem{problem}[theorem]{Problem}
\newtheorem{example}[theorem]{Example}
\newtheorem{remark}[theorem]{Remark}
\newcommand{\Teich}{Teichm\"{u}ller }
\newcommand{\Hol}{H\"{o}lder }
\newcommand{\tb}[1]{\textbf{#1}}
\newcommand{\bs}{\backslash}
\DeclareMathOperator{\diam}{diam}
\DeclareMathOperator{\fil}{fill}
\DeclareMathOperator{\hcap}{hcap}
\title{Drivers, hitting times, and weldings in Loewner's equation}
\author{Vlad Margarint and Tim Mesikepp}
\date{}
\begin{document}

\maketitle

\begin{abstract}
In addition to conformal weldings $\varphi$, simple curves $\gamma$ growing in the upper half plane generate driving functions $\xi$ and hitting times $\tau$ through Loewner's differential equation.  While the Loewner transform $\gamma \mapsto \xi$ and its inverse $\xi \mapsto \gamma$ have been carefully examined, less attention has been paid to the maps $\xi \mapsto \tau \mapsto \varphi$.  We study their continuity properties and show that uniform driver convergence implies uniform hitting time convergence and uniform welding convergence, even when the corresponding curves do not converge.  Welding convergence implies neither hitting time nor driver convergence, while hitting time convergence implies driver convergence in (at least) the case of constant drivers.

As an application, we show that a curve $\gamma$ of finite Loewner energy can be well approximated by an energy minimizer that matches $\gamma$'s welding on a sufficiently-fine mesh.
\end{abstract}

\section{Introduction and main results}

\subsection{Loewner's equation and associated functions}

A hundred years ago, Charles Loewner \cite{Loewner1923} showed the evolution of maps $g_t$ from the slit disk $\mathbb{D} \backslash \gamma([0,t])$ back to $\mathbb{D}$,  where $\gamma$ is a curve growing into $\mathbb{D}$ from its boundary, satisfy a differential equation which in effect transforms $\gamma$ into a continuous \emph{driving function} $\lambda(t) = g_t(\gamma(t))$ taking values on $\partial \mathbb{D}$. Loewner's approach played an important role in de Branges' proof \cite{bieberbach} of the Bierberbach conjecture in 1985, and received renewed interest following the ground-breaking 2000 work of Schramm \cite{Schramm2000}, who showed that the random curves generated by using a Brownian driving function run at speed $\kappa$ give the only-possible conformally-invariant scaling limits of a number of discrete models from statistical physics.  These processes, typically normalized to live in the upper half plane $\mathbb{H}$ now instead of Loewner's $\mathbb{D}$, and known as \emph{Schramm-Loewner-Evolutions SLE$_\kappa$}, have been intensely studied since and continue to be a topic of active research.\footnote{The literature is vast, and a non-exhaustive sampling of references is \cite{beff}, \cite{BLM}, \cite{JiamVlad},
\cite{Dubedat}, 
\cite{Jamespaper}, 
\cite{ShekharFriz},
\cite{FrizYuan}, 
\cite{GwynneMiller}, 
\cite{VikLawler}, 
\cite{lswerner},
\cite{LawlerVik2}, 
\cite{BasicpropSLE},
\cite{Schrammperc},
\cite{Qzipper}, 
\cite{Zhanrevers}.}  

In the setting of the upper half plane, Loewner's method takes a simple curve $\gamma:[0,T] \rightarrow \mathbb{H} \cup \{x\}$ and produces a real-valued driving function.  In the process, however, it also produces a \emph{hitting time} function and a \emph{conformal welding} $\varphi$.  To see this, consider the reversed Loewner flow in $\mathbb{H}$, described by normalized conformal maps $h_t:\mathbb{H} \rightarrow \mathbb{H}\bs \gamma_t$ which satisfy the ODE
\begin{align}\label{Eq:UpwardsLoewner}
    \partial_t h_t(z) = \frac{-2}{h_t(z) - \xi(t)}, \qquad h_0(z) = z
\end{align}
(see \S\ref{Sec:Prelim} for precise definitions and any unexplained terminology).  Here the $h_t$ conformally map $\mathbb{H}$ to the complement of the curve $\gamma_t$ (or more generally ``compact $\mathbb{H}$-hull'') generated by the continuous function $\xi:[0,T]\rightarrow \mathbb{R}$ on $[0,t]$. The dynamics in \eqref{Eq:UpwardsLoewner} extends to points $x \in \mathbb{R}$ away from $\xi(0)$, which by the ODE flow towards the driving function until the \emph{hitting time} 
\begin{align}\label{Def:HittingTime1}
    \tau(x) := \inf\{\,t \geq 0 \; : \; \liminf_{s \nearrow t} |h_s(x) - \xi(s)| =0  \,\}.
\end{align}
When $\xi$ is sufficiently regular, $\gamma_t$ neither intersects itself nor the real line, other than at its base $\xi(t)$, and in this case the function $x \mapsto \tau(x)$ is continuous.  Furthermore, exactly two points hit $\xi(t)$ to be ``welded together'' at each time $t$ (see Lemma \ref{Lemma:tauContinuous} below).  By \eqref{Eq:UpwardsLoewner} this pair started on opposite sides of $\xi(0)$, and we obtain another map, the \emph{conformal welding} $\varphi$, by sending $x < \xi(0)$ to the unique $\varphi(x) > \xi(0)$ which satisfies $\tau(x) = \tau(\varphi(x))$. We thus arrive at a natural sequence of mappings
\begin{align}\label{Intro:Maps}
    \gamma \mapsto \xi \mapsto \tau \mapsto \varphi.
\end{align}
The first arrow is the Loewner transform, and both it and its inverse have been carefully studied.\footnote{\label{Footnote:LoewnerTrans}Results include: $\gamma \mapsto \xi$ is continuous \cite[Thm. 6.2]{Kemp}, \cite[Thm. 1.8]{YizhengTopology}, but not uniformly so \cite[Figure 6]{LMR}.  $\xi \mapsto \gamma$ is not continuous with respect to capacity parametrization on the $\gamma$ \cite[Ex. 4.49]{Lawler}, but it is when the $\gamma$ are equipped with a Carath\'{e}odory-type topology on their normalized Riemann maps \cite[Prop. 4.47]{Lawler}.  In addition, restricting to more regular $\xi$ yields continuity for $\xi \mapsto \gamma$ with respect to various finer topologies on the $\gamma$.  See \cite[Thm. 4.1]{LMR} for $\xi$ with locally-small H\"{o}lder-$1/2$ norm, \cite[Thm. 2$(v)$,$(vi)$]{ShekharFriz} for $\xi$ of finite Loewner energy and \cite[Thm. 1.2]{ShekharWang} for $\xi$ of ``locally regular'' bounded variation.  See also \cite[Thm. 1.2]{SheffieldSun} for a continuity criterion in the $\xi \mapsto \gamma$ direction involving ``bi-directional and generic closeness'' on the drivers $\xi$.}  The latter maps have received less attention, however, and we preface and motivate our study of them by summarizing what is currently known.

Lind \cite{Lind4} used connections between $\xi, \tau$ and $\varphi$ as part of her argument that a driver $\xi$ with H\"{o}lder-1/2 seminorm $|\xi|_{1/2}<4$ generates a simple curve.  She proved that, given $|\xi|_{1/2}<4$, $\xi$ welds exactly two points at each time, $\tau(x) \asymp (x-\xi(0))^2$,  and $\varphi$ satisfies $(\varphi(x-h)-\varphi(x))/(\varphi(x)-\varphi(x+h)) \asymp 1$ \cite[Lemma 3, Corollary 1, Lemma 4]{Lind4}.  In our Lemma \ref{Lemma:tauContinuous} we generalize the first result to hold whenever $\xi$ generates a simple curve (which is known to be a broader class than $|\xi|_{1/2}<4$).

A connection between $\xi$ to $\tau$ appeared in Tran and Yuan's work on the topological support of SLE$_\kappa$ \cite{Yizheng}, where they proved that if two drivers $\xi$ and $\tilde{\xi}$ are $\delta$-close in sup norm, then $\tau(x) \leq T$ implies $\tilde{\tau}(x+\delta) \leq T$ \cite[Lemma 5.1]{Yizheng}, where $\tau$ and $\tilde{\tau}$ are the hitting times under the flows generated by $\xi$ and $\tilde{\xi}$, respectively.  We give a new simplified proof of this result in Lemma \ref{Lemma:DriverTimes1} and expand on the idea to show continuity properties of $\xi \mapsto \tau$ and $\tau \mapsto \varphi$.

Another recent work on SLE$_\kappa$ \cite{Vlad} studied the connection between $\xi$ and $\varphi$ in the case that $\xi$ a Brownian motion.  In it, the authors showed that the conformal welding $x \mapsto \varphi_\kappa(\omega, x)$ associated to SLE$_\kappa$ has a modification which is a.s. jointly continuous in $\kappa$ and $x$ for $(\kappa, x) \in [0,4]\times(-\infty,0]$.  One component of their proof was to show that, a.s., for all $0 \leq \kappa \leq 4$ simultaneously, the hitting times $x\mapsto \tau_\kappa(\omega, x)$ are continuous and strictly increasing on either side of $0=\xi_\kappa(0)$ \cite[Prop. 4.1$(a)$]{Vlad}.  This actually served as inspiration for the present study, as we wondered what could be said \emph{always} using deterministic Loewner theory, and not just with probability one. Our Lemma \ref{Lemma:tauContinuous} shows that $\tau$ always has these properties when $\xi$ generates a simple curve.  

Furthermore, note that the closeness of $\varphi_\kappa(\omega,x)$ and $\varphi_{\kappa'}(\omega,x)$ is a type of continuity statement about the driver-to-welding map $\xi \mapsto \varphi$, since when $\kappa'$ is sufficiently close to $\kappa$, the Brownian drivers $\xi_{\kappa'}(\omega,t) = \sqrt{\kappa'}B(\omega,t)$ and $\xi_{\kappa}(\omega,t) = \sqrt{\kappa}B(\omega,t)$ are close on a finite interval $[0,T]$ (note $\omega$ is fixed).  We generalize this joint continuity to hold for any drivers producing simple curves in Lemma \ref{Lemma:DriverToWeldingJoint}.  The authors also show \cite[Prop. 4.1$(b)$]{Vlad} that $\kappa \mapsto \tau_\kappa(\omega, \cdot)$ is continuous as a map from the reals to the space of continuous functions, which is likewise a type of continuity statement about $\xi \mapsto \tau$.  We extend this to all $\xi$ generating simple curves in Theorem \ref{Cor:TimesPointwise}.

This latter type of continuity question, for maps of functions to functions, is where our main interest lies, and this is what we study for $\xi \mapsto \varphi$, $\xi \mapsto \tau$ and $\tau \mapsto \varphi$.  

\subsection{Main results}

We equip spaces of continuous functions with the uniform norm, and restrict to the class of $\xi, \tau$, and $\varphi$ corresponding to simple curves $\gamma$.  This is natural to obtain continuous hitting times, as well as conformal weldings in the classical sense of the term.  We obtain the following.  Note that we state some of these informally; in each case see the referenced result for the precise statement, as well as Remark \ref{Remark:DomainTechnicality} below.
\begin{enumerate}[$(i)$]
    \item\label{Results:DriverToTimes} $\xi \mapsto \tau$ is continuous (Theorem \ref{Cor:TimesPointwise}) but not uniformly so (Lemma \ref{Lemma:DriverToTimesNotUniform}): if $\xi_n$ and $\xi$ generate simple curves and $\xi_n \xrightarrow{u} \xi$, then $\tau_n \xrightarrow{u} \tau$.  However, we can find $\xi_n, \tilde{\xi}_n$ with $\|\xi_n-\tilde{\xi}_n\|_\infty \rightarrow 0$ but where $\|\tau_n - \tilde{\tau}_n  \|_\infty > \epsilon$.  In addition, $(x,\xi) \mapsto \tau(x;\xi)$ is pointwise jointly continuous (Lemma \ref{Lemma:DriverToTimesJoint}).
    \item\label{Results:Lipschitz} $\xi \mapsto \tau^{-1}$ is Lipschitz continuous, with optimal Lipschitz constant 1 (Theorem \ref{Lemma:InverseTauLip}).
    \item $\tau \mapsto \varphi$ is continuous (Lemma \ref{Lemma:TimesToWeldingConvergence}). 
    \item\label{Results:DriverToWelding} $\xi \mapsto \varphi$ is continuous (Theorem \ref{Thm:DriverToWeldingConvergence}), but not uniformly so (Lemma \ref{Lemma:DriverToWeldingNotUniform}). In addition, $(x; \xi) \mapsto \varphi(x;\xi)$ is pointwise jointly continuous (Lemma \ref{Lemma:DriverToWeldingJoint}).
    \item Neither $\varphi \mapsto \tau$ nor $\varphi \mapsto \xi$ is continuous (Theorem \ref{Thm:WeldingToOthersNotContinuous}).
    \item\label{Results:TimesToDriver} $\tau \mapsto \xi$ is well defined and continuous at $\tau_{\tb{C}}$, the hitting time function of the constant driver $\tb{C}(t) \equiv C \in \mathbb{R}$ (Theorem \ref{Thm:TimesToDriverContinuousZero}).  That is, if a driver $\xi$ generates hitting times that are the same as $\tau_{\tb{C}}$, then $\xi = \tb{C}$.  Furthermore, if $\xi_n$ are drivers corresponding to simple curves with hitting times $\tau_n$ satisfying $\tau_n \xrightarrow{u} \tau_{\tb{C}}$, then $\xi_n \xrightarrow{u} \tb{C}$.
\end{enumerate}

\subsubsection{Discussion}
One of the contributions of these results is that no regularity is assumed on the drivers $\xi_n,\xi$ other than they belong to $S([0,T])$, the class of drivers generating simple curves on $[0,T]$.\footnote{There is currently no known analytic characterization of drivers $\xi \in S$, and this remains an important open problem.  The literature on this question, in addition to the above-mentioned work of Lind \cite{Lind4} building off \cite{Marshallrohde}, appears to consist of just \cite{LMR},\cite{Schleissingerthesis} and \cite{Zins}.}   It is well known, however, that the inverse Loewner transform $\xi \mapsto \gamma$ acting on $S$ is not continuous: there exist $\xi_n, \xi \in S([0,T])$ such that $\|\xi_n - \xi\|_{\infty[0,T]} \rightarrow 0$ but where the corresponding curves $\gamma_n$ do not even have subsequential limits in their half-plane capacity parametrizations, let alone uniformly converge \cite[Ex. 4.49]{Lawler}.  Our results in $(\ref{Results:DriverToTimes})$ and $(\ref{Results:DriverToWelding})$ say the uniform topologies on $\tau$ and $\varphi$ are oblivious to this pathological behavior in the $\gamma_n$: the $\tau_n$ and $\varphi_n$ generated by $\xi_n$ still converge to the $\tau$ and $\varphi$ generated by $\xi$.  Furthermore, by $(\ref{Results:Lipschitz})$ one even has quantitative convergence of the points $x_n < 0 <y_n$ welded at a given time $t$ by $\xi_n$ to the points $x< 0 <y$ welded at the same time by $\xi$.

Combined with the known results on $\gamma \mapsto \xi$ (see footnote \ref{Footnote:LoewnerTrans} above), our results show that moving from left to right in \eqref{Intro:Maps} is generally moving from stronger to weaker forms of convergence.  More work is needed to understand the precise nature of the middle arrow.  While our results on its inverse in part  $(\ref{Results:TimesToDriver})$ above are very preliminary, as they only cover constant drivers, we believe the questions behind them are natural: given $\xi \in S([0,T])$, does $\tau(\cdot; \xi)$ determine $\xi$?  And if so, is $\xi \mapsto \tau$ a homeomorphism onto its image?  We find it interesting that, even in the simplest case that we consider, proofs of the existence and continuity of the inverse are not entirely trivial; see \S\ref{Sec:TimesToDriver}.  We attempted to build a driver with large oscillations that we suspected could be a counterexample to continuity of $\tau \mapsto \xi$ for more general $\tau$, but numerical simulations showed our construction still converged for relatively smooth data.  We describe this construction and the simulations in \S\ref{Sec:PositiveEvidence} as positive evidence for a broader result.

We also highlight two other contributions.  In \S\ref{Sec:Zipper} we show that our results for the $\xi \mapsto \varphi$ map in  $(\ref{Results:DriverToWelding})$ imply that, for any finite collections of pairs $\{(x_j,y_j)\}_{j=1}^N$ with
\begin{align*}
    x_N < x_{N-1} < \cdots < x_1 < y_1 < \cdots < y_N,
\end{align*}
there exists a curve of minimal Loewner energy which welds each $x_j$ to $y_j$.  We use this in Theorem \ref{Cor:Zipper} to show that we can well-approximate any given finite-energy curve $\gamma([0,T])$ by an energy minimizer on a sufficiently-fine discretization of its welding.  This is related, although not identical, to the welding zipper algorithm of Donald Marshall.  

Finally, in the appendix we list two integral formulas relating all  our main actors $\xi, \tau$ and $\varphi$ that appear to have thus far escaped notice in the literature.

\begin{remark}\label{Remark:DomainTechnicality}
    Some of the statements in the list above, as alluded to, are imprecise as there is technicality regarding domains to deal with.  For instance, for the $\xi \mapsto \tau$ result in $(\ref{Results:DriverToTimes})$, we assume uniform convergence $\|\xi_n - \xi\|_{\infty[0,T]} \rightarrow 0$ on a fixed time interval $[0,T]$.  However, that does not imply that all the hitting time functions $\tau_n, \tau$ share a common domain (see Example \ref{Eg:NoEndpoints}), and so we prove that if the domains of $\tau_n$ and $\tau$ are $[a_n,b_n]$ and $[a,b]$, respectively, then $a_n \rightarrow a$, $b_n \rightarrow b$ and $\tau_n \xrightarrow{u} \tau$ on any compact subinterval $[c,d] \subset (a,b)$.  (In addition, our result in ($\ref{Results:Lipschitz}$) gives sharp quantitative control on $|a_n-a|$ and $|b_n-b|$.)  
    
    In \emph{this sense} we mean $\xi \mapsto \tau$ is continuous, and many of the other results above are similar.  We are thus often using ``continuity'' in a somewhat informal sense; in particular we do not attempt to equip the space $\tilde{C}$ of continuous functions with different domains with a topology $\tilde{\mathcal{C}}$ which would make, for instance, $\xi \mapsto \tau$ continuous from $(C([0,T]), \|\cdot\|_{\infty[0,T]})$ to $(\tilde{C},\tilde{\mathcal{C}})$, although this may be possible. 
\end{remark}

\subsection{Methods}

For results concerning the $\xi \mapsto \tau$, $\tau \mapsto \varphi$ and $\xi \mapsto \varphi$ maps, we primarily rely on Lemma \ref{Lemma:tauContinuous} combined with the surprising power of Lemma \ref{Lemma:DriverTimes1} and the formula \eqref{Eq:LoewnerShift}. Proofs that maps are not continuous or not uniformly continuous are based on explicitly-constructed examples.  Some new machinery was needed to say anything about the $\tau \mapsto \xi$ direction, and our main tool here is Lemma \ref{Lemma:FasterTimes}, which says that the farther a driver welds two points $x_0 < y_0$ from their initial average $(x_0+y_0)/2$, the lower the hitting time.  

\subsection{Organization}
In \S\ref{Sec:Prelim} we establish notation and review background of deterministic Loewner chains.  We prove our main lemmas in \S\ref{Sec:Lemmas}, the results on $\xi \mapsto \tau$ and $\xi \mapsto \tau^{-1}$ in \S\ref{Sec:DriverToTimes} and \S\ref{Sec:DriverToTimesInverse}, respectively, and give results and simulations regarding $\tau \mapsto \xi$ in \S\ref{Sec:TimesToDriver}.  In \S\ref{Sec:BlankToWelding} we cover continuity of $\tau \mapsto \varphi$ and $\xi \mapsto \varphi$, and show by example in \S\ref{Sec:WeldingCounterexample} that $\varphi \mapsto \tau$ and $\varphi \mapsto \xi$ are not continuous.  Our application of Theorem \ref{Thm:DriverToWeldingConvergence} to minimal-energy curves falls in \S\ref{Sec:Zipper}, and we conclude with open problems in \S\ref{Sec:Problems}, and then the appendix.


\bigskip 
\noindent \textbf{Acknowledgements}
The authors are thankful to Yizheng Yuan for pointing our attention to \cite[Lemma 5.1]{Yizheng} and suggesting how it could yield welding convergence, and for looking at a draft of the paper.  We also thank Don Marshall and Steffen Rohde for looking at a very early draft, and we are grateful to have learned the trick \eqref{Eq:LoewnerShift} from Steffen Rohde (perhaps it goes back to Oded Schramm), and to have seen it applied in a similar manner, albeit rougher, to what we do in Theorem \ref{Lemma:InverseTauLip}. This research was partially conducted while the authors were at Mathematical Sciences Research Institute during the spring 2022 semester and is thus partially supported by the US National Science Foundation under Grant No. DMS-1928930.

\section{Notation and preliminaries}\label{Sec:Prelim}
\subsection{Basics of Loewner theory}
We sketch some notation and results concerning the Loewner equation; for more background see, for instance,  \cite{Kemp} or \cite{Lawler}.  We frame the theory largely in terms of the reverse/upwards maps $h_t$, as they induce the hitting times $\tau$.

Indeed, $h_t:\mathbb{H} \rightarrow \mathbb{H} \backslash \gamma_t([0,t])$ in \eqref{Eq:UpwardsLoewner} is the unique conformal map which fixes $\infty$ and satisfies
\begin{align}\label{Eq:UpwardsMapsInfty1}
    h_t(z) = z + O(1/z), \qquad z \rightarrow \infty.
\end{align}
In other words, scaling and translation only occurs locally around $\gamma_t$, not at $\infty$.  We assume $\gamma_t$ is parametrized by \emph{half-plane capacity}, in which case the above expansion is actually
\begin{align}\label{Eq:h_tInfty}
    h_t(z) = z - \frac{2t}{z} + O(1/z^2), \qquad z \rightarrow \infty,
\end{align}
and we say that the half-plane capacity $\hcap(\gamma_t)$ of $\gamma_t$ is $2t$ (so note time $t$ corresponds to $\hcap$ $2t$).  The extension of $h_t(z)$ to the real line maps $\xi(0)$ to the tip $\tilde{\gamma}_t(t)$ of the curve $\tilde{\gamma}_t$ generated by $\xi$ on $[0,t]$, while the base of $\gamma_t$ is at $\xi(t)$.

Note that can define $\hcap(K)$ whenever $K\subset \mathbb{H}$ is a \emph{compact $\mathbb{H}$-hull}, which is to say, $K$ is bounded, relatively closed in $\mathbb{H}$, and $\mathbb{H} \bs K$ is simply connected.  In this case, there is again a unique conformal map $h_K:\mathbb{H} \rightarrow \mathbb{H}\bs K$ satisfying \eqref{Eq:UpwardsMapsInfty1} at $\infty$, and with expansion
\begin{align*}
    h_K(z) = z - \frac{a_1}{z} + O(1/z^2), \qquad z \rightarrow \infty.
\end{align*}
We define $\hcap(K) := a_1$, which is positive when $K \neq \emptyset$.  It follows from the definition that $\hcap$ satisfies $\hcap(K+x) = \hcap(K)$ and $\hcap(rK) = r^2\hcap(K)$ for $r \geq 0$.  Also, $K_1 \subset K_2$ implies $\hcap(K_1) \leq \hcap(K_2)$.  Considering $K= \overline{B_1(0)} \cap \mathbb{H}$ as a concrete example, we find that $h_K^{-1}(z) = z+1/z$, and thus
\begin{align}\label{Eq:Diskhcap}
    \hcap\big(\overline{B_1(0)} \cap \mathbb{H} \big) = 1.
\end{align}
There is also a stochastic definition, which allows us to drop the requirement that $\mathbb{H}\bs K$ be simply connected.  Indeed, one can show
\begin{align}\label{Def:hCapProbability}
    \hcap(K) = \lim_{y \rightarrow \infty}y \mathbb{E}^{iy}(\text{Im } B_\tau),
\end{align}
where $B_t$ is two-dimension Brownian motion started from $iy$, and $\tau$ is the exit time of $\mathbb{H} \bs K$.  See \cite[\S3.4]{Lawler} for these and further properties.

We will interchangeably call the dynamics given by \eqref{Eq:UpwardsLoewner} the ``upwards Loewner flow'' and ``reverse Loewner flow.''  The former is not entirely standard, but is natural from the point of view that the curves $\gamma_t$ grow upwards into $\mathbb{H}$ from $\xi$'s position in $\mathbb{R}$.  The \emph{downwards} or \emph{forwards} map $g_t$ is the unique map from $\mathbb{H}\bs \gamma([0,t])$ to $\mathbb{H}$ which fixes $\infty$ and is $z + O(1/z)$ near infinity, and in this direction the expansion corresponding to \eqref{Eq:h_tInfty} is
\begin{align*}
    g_t(z)= z + \frac{2t}{z} + O(1/z^2), \qquad z \rightarrow \infty,
\end{align*}
and the Loewner equation for the $g_t$ is 
\begin{align}\label{Eq:Loewner}
    \dot{g}_t(z) = \frac{2}{g_t(z)-\lambda(t)}, \qquad g_0(z) = z.
\end{align}

The relation between the $h_t$ and $g_t$, $\xi$ and $\lambda$, and $\gamma_t$ and $\gamma$ is the following.  Let $\gamma:[0,T] \rightarrow \mathbb{H} \cup \{x\}$ be a fixed, simple curve with $\gamma(0)=x$ (which is what our notation for the range means).  The downwards/forward driving function $\lambda$ is just the reversal of $\xi$, $\lambda(t) = \xi(T-t)$.  (If we wish to normalize by starting at zero, we may take $\lambda(t) = \xi(T-t)-\xi(T)$, or equivalently, $\xi(t) = \lambda(T-t) - \lambda(T)$.)  We always write $\lambda$ from the downwards driving function and $\xi$ for the upwards.  The curve $\gamma_t$ generated by $\xi$ on $[0,t]$ is the conformal image $g_{T-t}\big( \gamma([T-t,T]) \big)$ of the last $t$ units $\gamma([T-t,T])$ of $\gamma$ under $g_{T-t}$.  That is,
\begin{align*}
    h_t = g_{T-t} \circ g_T^{-1}, \qquad 0 \leq t \leq T.
\end{align*}
To see this, note that it holds for $t=0$, and observe $\partial_t (g_{T-t}^{-1} \circ h_t ) =0$ by  \eqref{Eq:UpwardsLoewner} and \eqref{Eq:Loewner}.  So for the $g_t$ the underlying $\gamma$ is a fixed curve which is growing at its tip, whereas in the case of the $h_t$ maps, $\gamma_t$ grows at its base and the entire curve is constantly being conformally deformed.

Recall driving functions $\xi, \lambda$ are always continuous and thus members of $C([0,T])$.  We write $C_0$ for the set of continuous functions starting at zero, and $S, S_0$ for those $\xi \in C,C_0$ that generate a \emph{simple curve} $\gamma^\xi$ upwards Loewner flow (where we include the time domain $[0,T]$ as needed).  By this we mean that the final curve $\gamma^\xi$ generated on time $[0,T]$ by $\xi$ is non self-intersecting and also does not touch $\mathbb{R}$ other than at its base: $\gamma^\xi \cap \mathbb{R} = \{\xi(T)\}$.  It is not hard to see this is equivalent to saying that the curve generated by $\xi$ on any time interval $[t_1,t_2] \subset [0,T]$ has these same two properties.

Recall that well-known elements of $S$ include linear drivers \cite{KNK}, drivers with one-sided H\"{o}lder-1/2 norm less than four \cite{Lind4,LMR,Zins} and, a.s., scaled Brownian motion $\sqrt{\kappa}B(t)$ when $0 \leq \kappa \leq 4$ \cite{BasicpropSLE}.

Points $x_0< \xi(0) <y_0$ under \eqref{Eq:UpwardsLoewner} flow along the real line towards $\xi$, and we interchangeably write
\begin{align*}
    x(t) = x(t;\xi) = h_t(x_0;\xi) = h_t(x_0)
\end{align*}
for the image of $x_0$ after $t$ units of time under driver $\xi$, and similarly for $y(t)$.

\subsection{Hitting times and weldings}

Let $\xi$ be continuous.  For $x_0 \in \mathbb{R} \bs \xi(0)$, the dynamics in \eqref{Eq:UpwardsLoewner} becomes 
\begin{align}\label{Eq:UpwardsLoewnerx}
    \dot{x}(t) = \frac{-2}{x(t) - \xi(t)}, \qquad x(0) = x_0. 
\end{align}
As noted in the introduction, the \emph{hitting time} $\tau(x_0)$ of $x_0$ is then
\begin{align}\label{Def:HittingTime1+}
    \tau(x) = \inf\{\,t \geq 0 \; : \; \liminf_{s \nearrow t} |x(s) - \xi(s)| =0  \,\},
\end{align}
and so the flow $t \mapsto x(t;\xi)$ is well defined on $[0,\tau(x_0))$.  Note that if there are no such times $t$ for some $x$, then $\tau(x) = +\infty$, and we say that $x$ is not welded by $\xi$.  We also remark that   \eqref{Def:HittingTime1+} says nothing about whether or not $\xi$ generates a simple curve; $\tau(x)$ is well defined in either case.

Suppose $\tau(x_0) <\infty$.  By \eqref{Eq:UpwardsLoewnerx}, $x_0$ flows monotonically towards $\xi$, and its position is bounded by the maximum of $\xi$ on $[0,\tau(x_0)]$.  In particular, $\lim_{t\nearrow \tau(x_0) }x(t)$ exists, and by \eqref{Def:HittingTime1+} must be $\xi(\tau(x_0))$.  Thus we can extend $t \mapsto x(t)$ from $[0,\tau(x_0))$ to $[0,\tau(x_0)]$ by setting $x(\tau(x_0)) = \xi(\tau(x_0))$, and we have
\begin{align}
    \tau(x) &= \inf\{\,t \geq 0 \; : \; \lim_{s \nearrow t} |x(s) - \xi(s)| =0  \,\} \notag\\
    &= \inf\{\,t \geq 0 \; : \;  x(t)=\xi(t)  \,\}.\label{Def:HittingTime3}
\end{align}


In the case of the zero driver $\tb{0}(t) \equiv 0$, for example, it is not hard to see that the map satisfying \eqref{Eq:UpwardsLoewner} is $h_t(z) = \sqrt{z^2 - 4t}$, and so the points mapping to the base of the curve 0 under the extension of $h_t$ to $\mathbb{R}$ are $\pm 2\sqrt{t}$, yielding the hitting times
\begin{align}\label{Eq:ZeroHittingTime}
    \tau(x; \tb{0})=\frac{x^2}{4}, \qquad x \in \mathbb{R}.
\end{align}

We show in Lemma \ref{Lemma:tauContinuous} that, for $\xi \in S$, $x \mapsto \tau(x)$ is strictly increasing as one moves away from $\xi(0)$.  We can thus think of $\tau$ as consisting of two invertible functions, the left and the right hitting times, which we denote by
\begin{align}\label{Eq:HittingTimesTwoFunctions}
    \tau_- := \tau|_{x\leq \xi(0)} \qquad \text{ and } \qquad \tau_+ := \tau|_{y \geq \xi(0)}.  
\end{align}

As mentioned in the introduction, the \emph{conformal welding} associated to $\xi \in S$ is the homeomorphism of intervals on either side of $\xi(0)$ which satisfies $\tau(x) = \tau(\varphi(x))$ for all $x$ with $\tau(x)<\infty$.  We take the convention that $\varphi$ maps from the left of $\xi(0)$ to the right, and so more precisely, for $x \leq \xi(0)$,
\begin{align*}
    \tau_-(x) = \tau_+(\varphi(x)) \qquad \text{ or } \qquad \tau_+^{-1} \circ \tau_-(x) = \varphi(x).
\end{align*}
The welding can also be defined in terms of the maps $h_t$ via $h_t(\varphi(x)) = h_t(x)$ for $t \geq \tau(x)$.

\subsection{Other notation}
For a curve welding $\varphi:[-a,0] \rightarrow [0,b]$, when one (or both) of $-a,b$ is infinite, we mean that $\varphi$ is defined on $(-a,0]$ and $\lim_{x \rightarrow a^+}\varphi(x) = b$.  

For a continuous function $f$ on an interval $[a,b]$, we write $\|f\|_{\infty[a,b]} := \max_{x \in [a,b]}|f(x)|$.  Of course, $f_n \xrightarrow{u} f$ on $[a,b]$ means $\|f_n-f\|_{\infty[a,b]} \rightarrow 0$.   

We write $a \wedge b := \min\{a,b\}$ and $a \vee b := \max\{a,b\}$, and $A(x) \asymp B(x)$ means there exists constant $C\geq 1$ such that
\begin{align*}
    \frac{1}{C}B(x) \leq A(x) \leq CB(x) 
\end{align*}
for all values of $x$.

\subsection{Elementary lemmas}
We will use the following two easy lemmas.  The first is similar to Dini's theorem and specifies a situation where we can upgrade from pointwise to uniform convergence. 
\begin{lemma}\label{Lemma:OurDini}
    Let $[a,b] \subset \mathbb{R}$ be a closed interval and $f_n:[a,b] \rightarrow \mathbb{R}$ a sequence of functions such that each $f_n$ is non-increasing or non-decreasing.  Suppose $f:[a,b] \rightarrow \mathbb{R}$ is continuous and $f_n \rightarrow f$ pointwise on $[a,b]$.  Then we have $\|f_n-f \|_{\infty[a,b]} \rightarrow 0$.
\end{lemma}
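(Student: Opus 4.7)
The plan is to adapt the standard Dini-style argument, leveraging uniform continuity of the limit $f$ on the compact interval $[a,b]$ together with monotonicity of each individual $f_n$ to control the oscillation between finitely many partition points.

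First, given $\epsilon > 0$, I would use uniform continuity of $f$ on $[a,b]$ to choose a partition $a = x_0 < x_1 < \cdots < x_K = b$ fine enough that $|f(x) - f(x_k)| < \epsilon$ whenever $x \in [x_k, x_{k+1}]$. Since there are only finitely many partition points and $f_n \to f$ pointwise at each of them, I can pick $N$ so large that $|f_n(x_k) - f(x_k)| < \epsilon$ for every $n \geq N$ and every $k \in \{0, 1, \ldots, K\}$.

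Next, fix $n \geq N$ and $x \in [x_k, x_{k+1}]$. The key observation is that, whether $f_n$ is non-decreasing or non-increasing, it sandwiches $f_n(x)$ between its values at the adjacent partition points; that is,
\begin{align*}
\min\bigl(f_n(x_k), f_n(x_{k+1})\bigr) \,\leq\, f_n(x) \,\leq\, \max\bigl(f_n(x_k), f_n(x_{k+1})\bigr).
\end{align*}
Combining this with the two $\epsilon$-estimates above and the triangle inequality gives $f(x) - 2\epsilon \leq f_n(x) \leq f(x) + 2\epsilon$, uniformly in $x$ and in $n \geq N$. Letting $\epsilon \to 0$ yields $\|f_n - f\|_{\infty[a,b]} \to 0$.

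There is no substantive obstacle to this argument. The only point to notice is that we do not need the direction of monotonicity to be consistent across $n$: the sandwich inequality works for each $f_n$ separately, so mixing non-increasing and non-decreasing $f_n$ in the sequence causes no issue. Nor do we need the $f_n$ to be continuous, since we only evaluate them at partition points and exploit their monotonicity in between.
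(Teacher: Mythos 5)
Your proof is correct and follows essentially the same approach as the paper: a partition fine enough for the uniform continuity of $f$, pointwise convergence at the finitely many partition points, and the monotonicity of each individual $f_n$ to control its values in between. The only cosmetic difference is that your sandwich bound $\min\bigl(f_n(x_k),f_n(x_{k+1})\bigr)\leq f_n(x)\leq \max\bigl(f_n(x_k),f_n(x_{k+1})\bigr)$ handles the non-increasing and non-decreasing cases simultaneously, where the paper treats them separately with one-sided estimates.
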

\noindent Note that we do not assume the $f_n$ are continuous or that the monotonicity across the sequence is the same, i.e. some $f_n$ may be increasing and some decreasing.
\begin{proof}
    Choose $\delta>0$ such that $|f(x)-f(y)|<\epsilon/2$ whenever $a \leq x,y \leq b$ satisfy $|x-y|<\delta$, and let $a = x_0 < x_1< \cdots <x_m = b$ be a partition of $[a,b]$ of mesh size less than $\delta$. Let $N$ be large enough so that $|f_n(x_j)-f(x_j)|<\epsilon/2$ for all $j \in \{1,\ldots, m\}$ whenever $n \geq N$.  Choosing such an $n$, suppose $f_n$ is non-decreasing.  Then for any $x_j < x < x_{j+1}$,
    \begin{align*}
        f_n(x) - f(x) \leq f_n(x_{j+1}) - f(x_{j+1}) +f(x_{j+1}) - f(x) < \epsilon,
    \end{align*}
    and similarly
    \begin{align*}
        f(x)-f_n(x) \leq f(x) - f(x_j) + f(x_j) - f_n(x_j) < \epsilon.
    \end{align*}
    The argument is similar when $f_n$ is non-increasing.
\end{proof}

\begin{lemma}\label{Lemma:InversesConverge}
    Let $f_n, f\colon \mathbb{R} \to \mathbb{R}$ be continuous and strictly increasing with $f_n \to f$ point-wise. Then $f_n^{-1} \to f^{-1}$ point-wise.
\end{lemma}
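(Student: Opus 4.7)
The plan is a direct $\epsilon$-sandwich. Fix $y$ in the range of $f$ and set $x \defeq f^{-1}(y)$, so that $f(x)=y$. Given $\epsilon > 0$, the first step would be to exploit strict monotonicity of $f$ at the two fixed auxiliary points $x\pm\epsilon$ to write
\[
    f(x-\epsilon) \;<\; y \;<\; f(x+\epsilon).
\]
Next I would apply the pointwise hypothesis $f_n \to f$ only at these two specific points (so the two convergences together give a single threshold $N$) to conclude that for all $n \geq N$,
\[
    f_n(x-\epsilon) \;<\; y \;<\; f_n(x+\epsilon).
\]
Finally, since each $f_n$ is continuous and strictly increasing, the intermediate value theorem produces a unique $x_n \in (x-\epsilon, x+\epsilon)$ with $f_n(x_n)=y$, i.e.\ $f_n^{-1}(y) = x_n$, and therefore $|f_n^{-1}(y) - f^{-1}(y)| < \epsilon$ for all $n \geq N$.

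The only real subtlety, and the piece that might be flagged as an obstacle, is well-definedness of $f_n^{-1}(y)$: the hypotheses do not require the $f_n$ to be surjective, so a priori $y$ need not lie in the range of every $f_n$. However, the sandwich above already resolves this for free, because the existence of $x_n$ supplied by the intermediate value theorem simultaneously gives both $y \in \mathrm{range}(f_n)$ and the desired two-sided bound $|x_n - x| < \epsilon$. No further machinery, Loewner or otherwise, is needed; the proof is entirely elementary and does not even invoke \Cref{Lemma:OurDini}.
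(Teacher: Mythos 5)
Your proof is correct and follows essentially the same route as the paper: sandwich $y$ between $f(x-\epsilon)$ and $f(x+\epsilon)$, use pointwise convergence only at those two points, and conclude via monotonicity of $f_n$ that $f_n^{-1}(y) \in (x-\epsilon, x+\epsilon)$. Your extra remark invoking the intermediate value theorem to guarantee $y \in \mathrm{range}(f_n)$ is a small, sensible refinement of a point the paper's proof leaves implicit.
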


\begin{proof}
    Let $y \in \mathbb{R}$ and $x = f^{-1}(y)$. By the monotonicity of $f$ we have $f(x-\varepsilon) < f(x) < f(x+\varepsilon)$. Let $\delta := |f(x)-f(x-\varepsilon)| \vee |f(x)-f(x+\varepsilon)|$ and let $n$ be large enough such that $|f(x\pm\varepsilon)-f_n(x\pm\varepsilon)| < \delta$ . Then $f_n(x-\varepsilon) < f(x) < f_n(x+\varepsilon)$, and consequently $f_n^{-1}(y) \in {(x-\varepsilon,x+\varepsilon)}$ by monotonicity of $f_n$.
\end{proof}

\section{Continuity properties of $\xi \mapsto \tau$ and $\xi \mapsto \tau^{-1}$}\label{Sec:TauContinuity}

\subsection{Lemmas}\label{Sec:Lemmas}
We establish some tools before proving our continuity results.  Our first lemma gives basic properties of the hitting times $x \mapsto \tau(x;\xi)$ when $\xi \in S$.  This is a generalization of \cite[Prop. 4.1$(a)$]{Vlad} and \cite[Lemma 3]{Lind4}; the former, because this always holds, rather than only almost surely, and the latter, because we only assume $\xi \in S([0,T])$, not that $\xi \in \text{H\"{o}l}(1/2)$ with $|\xi|_{1/2}<4$.
\begin{lemma}\label{Lemma:tauContinuous}
    If $\xi \in S([0,T])$, $0 < T \leq \infty$,  $x \mapsto \tau(x) = \tau(x;\xi)$ is continuous.  It is strictly increasing for $x\geq\xi(0)$ and strictly decreasing for $x \leq \xi(0)$. In particular, for each $0 <t \leq T$ there are exactly two points $x < \xi(0) < y$ such that $\tau(x;\xi) = t = \tau(y;\xi)$.
\end{lemma}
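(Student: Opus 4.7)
The plan is to push all three claims onto the simple curve $\gamma = \gamma^\xi$ via the forward map $g_T$, using the factorization $h_t = g_{T-t} \circ g_T^{-1}$. First, I would observe that for any $y_0 > \xi(0) = \lambda(T)$ that is eventually welded, writing $g_T^{-1}(y_0) = \gamma(s)$ (viewed as a prime end on the right side of the slit) gives $h_t(y_0) = g_{T-t}(\gamma(s))$. This lies on $\mathbb{R}$ for $0 \leq t \leq T - s$, with $h_{T-s}(y_0) = g_s(\gamma(s)) = \lambda(s) = \xi(T-s)$ being the moment of absorption, whereas for $t > T - s$ the image lies in $\mathbb{H}$. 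Hence
\begin{equation*}
    \tau(y_0) = T - s \quad \text{whenever} \quad g_T^{-1}(y_0) = \gamma(s),
\end{equation*}
and the analogous identification holds on the left of $\xi(0)$.

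Next, since $\gamma$ is simple, $\mathbb{H} \setminus \gamma$ is simply connected with locally connected boundary, so Carath\'{e}odory's theorem gives a continuous extension of $g_T$ to the prime-end closure of $\mathbb{H} \setminus \gamma$. The two sides of $\gamma$ then correspond to disjoint families of prime ends, each mapped homeomorphically to a closed interval on $\mathbb{R}$: the right side to $[\xi(0), b_T]$ and the left to $[a_T, \xi(0)]$, where $a_T < \xi(0) < b_T$ are the images of $\gamma(0)$ from the two sides. Inverting, $y_0 \mapsto s(y_0)$ becomes a continuous, strictly decreasing bijection from $[\xi(0), b_T]$ onto $[0, T]$, and therefore $\tau_+(y_0) = T - s(y_0)$ is continuous and strictly increasing on its domain. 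The left-hand version is symmetric.

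Finally, for each $t \in (0, T]$ the point $\gamma(T - t)$ is uniquely determined by simplicity of $\gamma$, and the boundary extension of $g_T$ pairs its two prime ends to exactly one $x \in [a_T, \xi(0))$ and one $y \in (\xi(0), b_T]$ with $\tau(x) = \tau(y) = t$; uniqueness of the pair then follows from the strict monotonicity just established.

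The main obstacle will be the boundary bookkeeping under Carath\'{e}odory: carefully distinguishing the ``left'' and ``right'' prime ends of the slit, verifying that the extension is a homeomorphism on each welding interval, and ensuring all edge cases at $\xi(0)$, $a_T$, $b_T$ (and the $T = \infty$ regime, reached by exhausting $[0, T]$ by finite subintervals) work out. Once that setup is in place, the three claims follow mechanically from the identity $\tau = T - s$ together with the injectivity of $s \mapsto \gamma(s)$.
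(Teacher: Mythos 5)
Your argument is correct in substance, but it takes a genuinely different route from the paper. You transfer everything to the terminal forward map via $h_t = g_{T-t}\circ g_T^{-1}$, identify the hitting time of a welded point with the curve parameter through $\tau(y_0)=T-s$ where $g_T^{-1}(y_0)$ is the prime end over $\gamma(s)$, and then read off continuity, strict monotonicity, and the exactly-two-points property all at once from the Carath\'{e}odory boundary homeomorphism between each side of the slit and a real interval. The paper instead stays entirely in the upward flow: strict monotonicity comes directly from the ODE (the derivative computation \eqref{Eq:LoewnerIncrement} shows two points on the same side spread apart, so the outer one cannot be absorbed first), and continuity is an intermediate-value argument — for $t_2$ strictly between two hitting times one maps up with $h_{t_2}$, uses the fact that the prime end $\xi(t_2)$ of $\mathbb{H}\setminus\gamma_{t_2}$ has exactly two real preimages, and rules out an earlier absorption time $t_2'<t_2$ by noting the curve grown on $[t_2',t_2]$ has half-plane capacity $2(t_2-t_2')>0$, hence its tip lies strictly in $\mathbb{H}$, contradicting $h_{t_2}(y_2)\in\mathbb{R}$. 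Both proofs invoke the same prime-end fact for simple slits, so you are not assuming more machinery there; what your route buys is a clean global dictionary ($\tau$ equals time remaining until the prime end becomes the tip), from which all three claims are simultaneous corollaries, and what it costs is exactly the ``bookkeeping'' you flag: you must justify that the ODE-defined hitting time \eqref{Def:HittingTime1} coincides with the prime-end absorption time, i.e.\ that the boundary values of $g_{T-t}\circ g_T^{-1}$ agree with the real flow up to the blow-up time and that no earlier liminf-absorption can occur — a standard but nontrivial identification, which the paper's capacity argument sidesteps — and you must handle $T=\infty$ by exhaustion since no terminal map $g_T$ exists. With those boundary identifications carried out carefully, your proof is complete and valid.
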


\begin{proof}
    Without loss of generality $\xi(0)=0$, and by symmetry it suffices to prove continuity and monotonicity for $y> 0$.  If $0 <y_1 < y_2$ and $t < \tau(y_1)$, then \eqref{Eq:UpwardsLoewner} yields
    \begin{align}\label{Eq:LoewnerIncrement}
        \frac{d}{dt} \big(y_2(t)-y_1(t) \big) = \frac{2(y_2(t)-y_1(t))}{(y_2(t) - \xi(t))(y_1(t)-\xi(t))} > 0.
    \end{align}
    Thus the points are getting further apart for $t<\tau(y_1)$, showing $y_1(\tau(y_1)) = \xi(\tau(y_1)) < y_2(\tau(y_1))$.  Since $\xi$ is continuous, $\tau(y_1) < \tau(y_2)$, and we see $y \mapsto \tau(y)$ is strictly increasing.
    
In particular, $\tau$ can only have jump discontinuities.  To see this actually does not happen, pick $0 <y_1 < y_3$ arbitrarily and let $t_2 \in (\tau(y_1), \tau(y_3))$.  We show there exists $y_2$ with $\tau(y_2) = t_2$.  Indeed, map up with $h_{t_2}$.  Since the curve $\gamma_{t_2}$ generated by $\xi([0,t_2])$ is simple, the prime end $\xi(t_2)$ of $\mathbb{H} \backslash \gamma_{t_2}$ corresponds to exactly two pre-images $x_2 < 0 < y_2$ under the extension of $h_{t_2}$.  Since 
\begin{align}\label{Eq:x2hit}
    h_{t_2}(y_2) = \xi(t_2),
\end{align}
$\tau(y_2) \leq t_2$ by definition of the hitting time.  

Suppose that $y_2$'s hitting time is actually earlier, i.e.\begin{align}\label{Eq:x2Assumption}
    \tau(y_2) =: t_2' < t_2,
\end{align}
which means we also have that $h_{t_2'}(y_2) = \xi(t_2')$.  Let $\tilde{h}_{t}$ be the map which, for $t \geq t_2'$, solves
\begin{align*}
    \partial_t \tilde{h}_t(z) = \frac{-2}{\tilde{h}_t(z) - \xi(t)}, \qquad \tilde{h}_{t_2'}(z) = z,
\end{align*}
i.e. which maps to the complement of the curve segment generated by $\xi$ for times $t \geq t_2'$.  Then for $t>t_2'$, $\tilde{h}_t$ sends $\xi(t_2')$ to the tip of the non-trivial curve segment generated on $[t_2',t]$ by $\xi$, which thus has positive imaginary part by the assumption $\xi \in S$ and the fact that its capacity is $2(t-t_2')$ (see \cite[Lemma 4.2]{Kemp} for the latter).  Since $h_{t_2} = \tilde{h}_{t_2-t_2'} \circ h_{t_2'}$, we thus see $\text{Im}\,h_{t_2}(y_2)>0$, which contradicts \eqref{Eq:x2hit}.  We conclude that $\tau(y_2)=t_2$ and that there are no jump discontinuities in $\tau$.

 

The last statement follows from the strict monotonicity and definition of the hitting times \eqref{Def:HittingTime3}.
\end{proof}

We will need the following two observations stemming from the Loewner equation \eqref{Eq:UpwardsLoewner}.  First, for fixed $\delta \in \mathbb{R}$, we have the shifting formula
    \begin{align}\label{Eq:LoewnerShift}
        h_t(x+\delta; \xi+\delta) = h_t(x; \xi)+\delta.
    \end{align}
    This holds for any $x \in \mathbb{H}$ for all $t$ (since solutions starting in $\mathbb{H}$ last forever) and for $x \in \mathbb{R}$ when $0 \leq t \leq \tau(x)$.
    
    Secondly, given two drivers $\xi_1 \leq \xi_2$ and a point $y_0$ with $\xi_2(0) < y_0$ with a time $t \leq \tau(y_0; \xi_2)$, driver monotonicity combined with the Loewner equation yield
    \begin{align}\label{Ineq:LoewnerOrdering}
        h_t(y_0; \xi_2) \leq h_t(y_0; \xi_1).
    \end{align}

The following lemma gives the key inequality for our $\xi \mapsto \tau$ arguments.  While we borrow the statement from \cite{Yizheng}, we offer a new, succinct proof, and also drop the requirement that the drivers start at the same location.  
\begin{lemma}\label{Lemma:DriverTimes1}
    \cite[Lemma 5.1]{Yizheng} Suppose $\xi, \tilde{\xi} \in C([0,T])$ with $\|\xi-\tilde\xi\|_{\infty[0,T]} \le \delta$. Fix $y$ with $\tilde{\xi}(0) < y$.  If $t < \tau(y;\tilde\xi)$ then $t< \tau(y+\delta; \xi)$.  Similarly, for $x < \tilde{\xi}(0)$, if $t< \tau(x; \tilde{\xi})$ then $t < \tau(x-\delta; \xi)$.
\end{lemma}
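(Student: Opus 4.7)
My plan is to reduce the comparison of flows under $\tilde{\xi}$ and $\xi$ to a single application of driver monotonicity \eqref{Ineq:LoewnerOrdering}, after making the two flows comparable via the shifting formula \eqref{Eq:LoewnerShift}. To that end, I would introduce the auxiliary driver $\tilde{\xi}_+ := \tilde{\xi} + \delta$. The hypothesis $\|\xi-\tilde{\xi}\|_{\infty[0,T]} \le \delta$ immediately gives $\xi(s) \le \tilde{\xi}_+(s)$ on $[0,T]$, while \eqref{Eq:LoewnerShift} yields $h_s(y+\delta;\tilde{\xi}_+) = h_s(y;\tilde{\xi}) + \delta$, and in particular $\tau(y+\delta;\tilde{\xi}_+) = \tau(y;\tilde{\xi})$.

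Next, I would apply \eqref{Ineq:LoewnerOrdering} with $\xi_1 = \xi \le \tilde{\xi}_+ = \xi_2$ at initial point $y_0 = y+\delta$, which satisfies $y+\delta > \tilde{\xi}(0) + \delta = \tilde{\xi}_+(0)$. This yields $h_s(y+\delta;\tilde{\xi}_+) \le h_s(y+\delta;\xi)$ for every $s \le \tau(y+\delta;\tilde{\xi}_+) = \tau(y;\tilde{\xi})$. Chaining with the shift identity, for any $s \in [0,t]$ (so $s \le t < \tau(y;\tilde{\xi})$) I obtain the strict lower bound
\begin{equation*}
    h_s(y+\delta;\xi) \;\ge\; h_s(y+\delta;\tilde{\xi}_+) \;=\; h_s(y;\tilde{\xi}) + \delta \;>\; \tilde{\xi}(s) + \delta \;\ge\; \xi(s),
\end{equation*}
where the strict inequality comes from the monotone descent of $h_s(y;\tilde{\xi})$ towards $\tilde{\xi}$ (directly from the Loewner ODE \eqref{Eq:UpwardsLoewnerx}) combined with $s < \tau(y;\tilde{\xi})$. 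Hence the flow of $y+\delta$ under $\xi$ stays strictly above $\xi$ throughout $[0,t]$, so $\tau(y+\delta;\xi) > t$.

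For the left-side statement I would appeal to a reflection symmetry: the involution $z \mapsto -z$ conjugates the Loewner flow with driver $\xi$ to the flow with driver $-\xi$, giving $h_s(-z;-\xi) = -h_s(z;\xi)$ and hence $\tau(-z;-\xi) = \tau(z;\xi)$, and it clearly preserves sup-norm distances between drivers. Applying the already-established right-side case to $-\xi$, $-\tilde{\xi}$ and the reflected point $-x > -\tilde{\xi}(0)$ delivers $t < \tau(-x+\delta;-\xi) = \tau(x-\delta;\xi)$, as desired.

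The only delicate point I anticipate is confirming that \eqref{Ineq:LoewnerOrdering} may be invoked over the entire interval $[0,t]$, even though we do not yet know that the flow of $y+\delta$ under $\xi$ survives to time $t$. This is handled automatically by the ODE comparison principle underlying \eqref{Ineq:LoewnerOrdering}: so long as the flow under the larger driver $\tilde{\xi}_+$ has not been absorbed, neither has the flow under the smaller driver $\xi$, and the ordering between them persists.
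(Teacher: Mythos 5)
Your proposal is correct and follows essentially the same route as the paper's proof: both hinge on combining the shift identity \eqref{Eq:LoewnerShift} with the driver-ordering inequality \eqref{Ineq:LoewnerOrdering} applied to $\tilde{\xi}+\delta$ versus $\xi$, yielding the same chain of inequalities (the paper writes it at time $t$, you track it over all $s\le t$ and make the reflection symmetry for the left-hand case explicit). No gaps; the extra care you take about the flow surviving to time $t$ is exactly what the hypothesis $t\le\tau(y_0;\xi_2)$ in \eqref{Ineq:LoewnerOrdering} already encodes.
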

\noindent Note also that, as in \cite{Yizheng}, we do not assume the drivers generate simple curves.
\begin{proof}
    
    
    By symmetry, it suffices to prove the statement for $y$ with $\tilde{\xi}(0) < y$.  Observing $t < \tau(y;\tilde{\xi})$ is equivalent to $\tilde{\xi}(t) < h_t(y; \tilde{\xi})$, we use \eqref{Eq:LoewnerShift} and \eqref{Ineq:LoewnerOrdering} to conclude
    \begin{equation*}
        \xi(t) \leq \tilde{\xi}(t) + \delta < h_t(y; \tilde{\xi}) + \delta = h_t(y+\delta; \tilde{\xi}+\delta) \leq h_t(y+\delta; \xi),
    \end{equation*}
    and hence $t < \tau(y+\delta; \xi)$.
\end{proof}
\begin{corollary}\label{Cor:TimesSandwich}
    Suppose $\xi, \tilde{\xi} \in C([0,T])$, $0 < T \leq \infty$, with $\|\xi-\tilde\xi\|_{\infty[0,T]} \le \delta$.  Then for any $y > \xi(0)+\delta$,
    \begin{align}\label{Ineq:DriverTimesMonotone}
        \tau(y-\delta ; \xi) \leq \tau(y; \tilde\xi) \leq \tau(y+\delta ; \xi).
    \end{align}
    Similarly, for $x < \xi(0)-\delta$,  $\tau(x-\delta ; \xi) \geq \tau(x; \tilde\xi) \geq \tau(x+\delta ; \xi).$
\end{corollary}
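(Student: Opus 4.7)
The plan is to obtain both inequalities as near-immediate consequences of Lemma \ref{Lemma:DriverTimes1}, applying it twice with the roles of $\xi$ and $\tilde\xi$ swapped, and then reading off the result by taking a supremum over permissible times.

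For the upper bound $\tau(y;\tilde\xi) \leq \tau(y+\delta;\xi)$, I would apply Lemma \ref{Lemma:DriverTimes1} exactly as stated. The only hypothesis to check is $y > \tilde\xi(0)$, which follows from $|\xi(0)-\tilde\xi(0)| \leq \delta$ together with the assumption $y > \xi(0)+\delta$, giving $\tilde\xi(0) \leq \xi(0)+\delta < y$. The lemma then yields the implication $t < \tau(y;\tilde\xi) \Rightarrow t < \tau(y+\delta;\xi)$ for every such $t$, and taking the supremum of admissible $t$ on both sides produces the desired inequality (with the usual convention that $\tau = \infty$ if no finite hitting time exists).

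For the lower bound $\tau(y-\delta;\xi) \leq \tau(y;\tilde\xi)$, the idea is to apply the same lemma with the roles of $\xi$ and $\tilde\xi$ reversed, which is legitimate because the sup-norm hypothesis $\|\xi-\tilde\xi\|_{\infty[0,T]} \leq \delta$ is symmetric. Now the point $y-\delta$ plays the role of ``$y$'' and must exceed the starting value of the driver $\xi$; this is precisely the standing assumption $y > \xi(0)+\delta$. The lemma then gives $t < \tau(y-\delta;\xi) \Rightarrow t < \tau((y-\delta)+\delta;\tilde\xi) = \tau(y;\tilde\xi)$, and the sup argument finishes the bound. The mirror statement for $x < \xi(0)-\delta$ follows identically from the second half of Lemma \ref{Lemma:DriverTimes1}, with the signs of the $\pm\delta$ shifts flipped to reflect that points on the left of the driver drift to the right.

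There is essentially no real obstacle here; this is a corollary in the strict sense. The only thing that requires care is the bookkeeping: making sure the starting-point inequalities are verified in each application, and keeping straight which direction of the shift $\pm \delta$ goes on $y$ (resp.\ on $x$) so that the sandwich comes out with the correct orientation.
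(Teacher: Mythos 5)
Your argument is correct and is essentially the paper's proof: both apply Lemma \ref{Lemma:DriverTimes1} twice, once as stated (checking $\tilde\xi(0)\leq\xi(0)+\delta<y$) and once with the roles of $\xi$ and $\tilde\xi$ swapped at the shifted point $y-\delta$, then pass from the implication on times $t$ to the inequality on hitting times. The paper merely spells out the limiting step via $t=n$ (infinite case) and $\tilde t_n=\tau(y;\tilde\xi)-1/n$, which is the same content as your supremum argument with the $\tau=\infty$ convention.
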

\noindent As no assumption is made on the finiteness of the hitting times, part of the statement of the corollary is that \eqref{Ineq:DriverTimesMonotone} holds whether or not each of the $\tau$'s is finite or infinite.  Note also that we again do not assume that $\xi, \tilde{\xi} \in S$.
\begin{proof}
By symmetry it suffices to consider $y > \xi(0) + \delta$.  If $\tau(y-\delta; \xi) = \infty$, then by using $t = n$ for large $n$ in Lemma \ref{Lemma:DriverTimes1} we find $n<\tau(y;\tilde{\xi})$, and so $\tau(y; \tilde{\xi}) = \infty$ as well.  Thus \eqref{Ineq:DriverTimesMonotone} holds when at least two of the members are infinite.

Suppose that $\tau(y; \tilde{\xi}) < \infty$.  Since $y>\tilde{\xi}(0)$ by assumption, by using times $\tilde{t}_n = \tau(y;\tilde{\xi})-1/n$ in Lemma \ref{Lemma:DriverTimes1} we obtain the right-most inequality in \eqref{Ineq:DriverTimesMonotone} in the limit.  The assumption $\tau(y; \tilde{\xi}) < \infty$ and Lemma \ref{Lemma:DriverTimes1} also yield $\tau(y-\delta; \xi) <\infty$, and then exchanging the roles of $\tilde{\xi}$ and $\xi$ and replacing $y$ with $y-\delta$ in the above argument yields the left inequality as well.  

We conclude \eqref{Ineq:DriverTimesMonotone} holds irrespective of whether any of the three hitting times is finite or infinite.
\end{proof}


\subsection{Continuity of $\xi \mapsto \tau$}\label{Sec:DriverToTimes}

Our above inequalities immediately yield continuity properties of the driver-to-hitting-time map.  Here we restrict to $\xi \in S([0,T])$ to avail ourselves of the continuity of $\tau$ from Lemma \ref{Lemma:tauContinuous}.

\begin{theorem}\label{Cor:TimesPointwise}
    Let $0<T \leq \infty$ and $\xi, \xi_n \in S([0,T])$ be drivers, with $[a,b]$ the maximal interval on which $\tau(\cdot; \xi)$ is finite, and $[a_n,b_n]$ the maximal interval where $\tau(\cdot; \xi_n)$ is finite, $-\infty \leq a < b \leq \infty$, $-\infty \leq a_n < b_n \leq \infty$.  If $\xi_n \xrightarrow{u} \xi$ on $[0,T]$, then $a_n \rightarrow a$, $b_n \rightarrow b$, and $\tau(x;\xi_n) \xrightarrow{u} \tau(x;\xi)$ on any $[c,d] \subset (-a,b)$.  
\end{theorem}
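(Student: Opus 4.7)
The plan is to establish pointwise convergence $\tau(y;\xi_n)\to\tau(y;\xi)$ first, deduce the endpoint convergence $a_n\to a$, $b_n\to b$ from it, and then upgrade to uniform convergence on $[c,d]\subset(a,b)$. The workhorse throughout is Corollary \ref{Cor:TimesSandwich}, which provides the sandwich
\begin{equation*}
    \tau(y-\delta_n;\xi)\le \tau(y;\xi_n)\le \tau(y+\delta_n;\xi)
\end{equation*}
for $y>\xi(0)+\delta_n$ (and the symmetric statement for $y<\xi(0)-\delta_n$), where $\delta_n:=\|\xi_n-\xi\|_{\infty[0,T]}\to 0$, combined with the continuity and strict one-sided monotonicity of $\tau(\cdot;\xi)$ from Lemma \ref{Lemma:tauContinuous}. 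Pointwise convergence at any $y\ne\xi(0)$ is then immediate, since the outer bounds both tend to $\tau(y;\xi)$ by continuity. At $y=\xi(0)$ (where the sandwich does not directly apply) I would separately squeeze $\tau(\xi(0);\xi_n)$ between $0$ and $\tau(\xi(0)\pm\epsilon;\xi_n)$ using the monotonicity of $\tau(\cdot;\xi_n)$ on either side of $\xi_n(0)$, noting $\tau(\xi(0);\xi)=0$.

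For the endpoint statement, I would first observe that when $T<\infty$ we must have $a,b$ finite: a crude integration of \eqref{Eq:UpwardsLoewnerx} for $y_0\gg\xi(0)$ shows points sufficiently far from $\xi(0)$ cannot reach $\xi$ by time $T$. Moreover, $\tau(b;\xi)=T=\tau(b_n;\xi_n)$ (otherwise the continuation argument inside the proof of Lemma \ref{Lemma:tauContinuous} would produce points with hitting time strictly between $\tau(b;\xi)$ and $T$ lying beyond $b$, contradicting maximality). Then Corollary \ref{Cor:TimesSandwich} applied at $y=b_n$ yields $\tau(b_n-\delta_n;\xi)\le T$ and hence $b_n\le b+\delta_n$; applied at $y=b$ it yields $T\le\tau(b+\delta_n;\xi_n)$ and hence $b_n\ge b-\delta_n$. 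Together $|b_n-b|\le\delta_n\to 0$, with $|a_n-a|\le\delta_n$ by the symmetric argument. The case $T=\infty$ with $b=\infty$ is easier: the sandwich forces $\tau(y-\delta_n;\xi_n)\le\tau(y;\xi)<\infty$ for every $y$, so $b_n\to\infty$.

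Finally, to upgrade to uniform convergence on $[c,d]\subset(a,b)$, I would fix a tolerance $\eta>0$, pick $\epsilon>0$ so that $[c-\epsilon,d+\epsilon]\subset(a,b)$ and $\tau(\xi(0)\pm 2\epsilon;\xi)<\eta$ (possible by continuity of $\tau(\cdot;\xi)$ at $\xi(0)$), and take $n$ large enough that $\delta_n<\epsilon$ and $|\xi_n(0)-\xi(0)|<\epsilon$. Assuming $c<\xi(0)<d$ (the other cases being strictly simpler), split $[c,d]$ into the three pieces $[c,\xi(0)-\epsilon]$, $[\xi(0)-\epsilon,\xi(0)+\epsilon]$, $[\xi(0)+\epsilon,d]$. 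On each outer piece the driver base $\xi_n(0)$ lies in the middle block, so $\tau(\cdot;\xi_n)$ is monotone across the whole piece and Lemma \ref{Lemma:OurDini} converts pointwise convergence into uniform convergence there. On the middle piece, the one-sided monotonicity of $\tau(\cdot;\xi_n)$ about $\xi_n(0)$ combined with the sandwich applied at $\xi(0)\pm\epsilon$ bounds $\tau(\cdot;\xi_n)\le \tau(\xi(0)+2\epsilon;\xi)\vee\tau(\xi(0)-2\epsilon;\xi)<\eta$, and the same bound holds for $\tau(\cdot;\xi)$ by monotonicity, so $|\tau(\cdot;\xi_n)-\tau(\cdot;\xi)|<2\eta$ there. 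Since $\eta>0$ was arbitrary, this yields $\tau(\cdot;\xi_n)\xrightarrow{u}\tau(\cdot;\xi)$ on $[c,d]$. The main technical nuisance I expect is precisely this middle-piece analysis: the mismatch between $\xi_n(0)$ and $\xi(0)$, which the sandwich cannot bridge on a $\delta_n$-neighborhood of $\xi(0)$, has to be absorbed by the fact that $\tau$ is uniformly small there.
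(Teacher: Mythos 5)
Your treatment of the uniform convergence on compacts $[c,d]\subset(a,b)$ is correct and is essentially the paper's argument: the sandwich of Corollary \ref{Cor:TimesSandwich} away from $\xi(0)$, plus the observation that all hitting times (for $\xi_n$ and for $\xi$) are uniformly small on a small neighborhood of $\xi(0)$, which absorbs the mismatch between $\xi_n(0)$ and $\xi(0)$. The only cosmetic difference is that on the outer pieces you invoke the Dini-type Lemma \ref{Lemma:OurDini} after pointwise convergence, whereas the paper gets uniformity directly from the modulus of continuity of $\tau(\cdot;\xi)$ via \eqref{Ineq:DriverTimesMonotone}; both routes work.

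The endpoint argument, however, has a genuine gap. Your derivation of $|b_n-b|\le\delta_n$ hinges on the identity $\tau(b;\xi)=T=\tau(b_n;\xi_n)$, which is only available when $T<\infty$; you then treat $T=\infty$ only under the additional assumption $b=\infty$. The case $T=\infty$ with $b<\infty$ is genuinely possible and is not covered: e.g.\ for a steep linear driver on $[0,\infty)$ (say $\xi(t)=-Kt$, which is in $S([0,\infty))$), points $y_0$ with $y_0>2/K$ satisfy $\dot{(y-\xi)}=-2/(y-\xi)+K>0$ and are never welded, so $b$ is finite while $T=\infty$; in that situation $\tau(b;\xi)$ is not $T$ (indeed the finiteness interval is only half-open at $b$), and your two-sided estimate collapses. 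The fix is the paper's route, which never uses $\tau(b;\xi)=T$: for $b<\infty$ and any $\epsilon>0$, pointwise convergence at $b-\epsilon$ gives $\tau(b-\epsilon;\xi_n)<\infty$, hence $b_n\ge b-\epsilon$ eventually, while the sandwich at a point beyond $b$ gives $\tau(b+\epsilon;\xi_n)\ge\tau(b+\epsilon/2;\xi)=+\infty$ once $\delta_n<\epsilon/2$, hence $b_n\le b+\epsilon$. Relatedly, even in your $T<\infty$ case there is a misquoted step: from the sandwich at $y=b$ you cite $T\le\tau(b+\delta_n;\xi_n)$ and conclude $b_n\ge b-\delta_n$, but that inequality gives no lower bound on $b_n$ (it is consistent with $\tau(b+\delta_n;\xi_n)=\infty$ and $b_n$ small); the half you need is $\tau(b-\delta_n;\xi_n)\le\tau(b;\xi)=T<\infty$, which does force $b_n\ge b-\delta_n$.
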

\noindent With regards to the technicality that we cannot include the endpoints $-a$ and $b$ in the convergence, see Example \ref{Eg:NoEndpoints} below.  We give sharp quantitative control on how far $a_n, b_n$ can be from $a,b$ in Theorem \ref{Lemma:InverseTauLip} below.

The notation is slightly imprecise when one of the interval endpoints is $\infty$. For instance, if $a =-\infty$ and $b \in \mathbb{R}$, we mean that $\tau(\cdot; \xi)$ is finite on $(-\infty,b)$ and $a_n$ is either identically $-\infty$ or is eventually less than any $-N$ for all large $n$.  If $T=\infty$, we mean $\xi,\xi_n$ are defined on $[0,\infty)$.
\begin{proof}
    Fix an interval $[c,d] \subset (a,b)$ and $\epsilon >0$, and choose $\delta$ small enough such that the following three conditions hold: $[c-\delta, d+\delta] \subset (a,b)$, 
    \begin{align}\label{Ineq:TimesNearZero}
        \max\{\, \tau(\xi(0)-2\delta;\xi), \tau(\xi(0)+2\delta;\xi) \,\} < \epsilon,
    \end{align}
    and the modulus of continuity $\omega$ of $\tau(\cdot; \xi)$ on $[c-\delta,d+\delta]$ satisfies
    \begin{align}
        \omega(\delta; [c,d]) &:= \sup\{\, |\tau(u;\xi) - \tau(v;\xi)| \; : \; c-\delta\leq u,v \leq d+\delta, \, |u-v| \leq \delta \,\}\notag\\ 
        &< \epsilon.\label{Ineq:TimesModulusSmall}
    \end{align}
    By Corollary \ref{Cor:TimesSandwich}, the first condition yields that $\tau(x;\xi_n) < \infty$ for all $x \in [c,d]$ when $n$ is large enough.  For $x \in [c,\xi(0)-\delta) \cup (\xi(0)+\delta, d]$, by \eqref{Ineq:DriverTimesMonotone} we then have
    \begin{align*}
        |\tau(x;\xi_n) - \tau(x;\xi)| \leq \omega(\delta; [c,d]) < \epsilon.
    \end{align*}
    If $\xi(0)-\delta \leq x \leq \xi(0) + \delta$, then
    \begin{align*}
        |\tau(x;\xi_n) - \tau(x;\xi)| \leq \tau(x;\xi_n) + \tau(x;\xi) \leq \tau(x;\xi_n) + \epsilon
    \end{align*}
    by \eqref{Ineq:TimesNearZero}.  Furthermore, using monotonicity and the fact that $\xi_n(0) \in [\xi(0)-\delta, \xi(0)+\delta]$ for all large $n$, we see
    \begin{align*}
        \tau(x;\xi_n) &\leq \max\{\, \tau(\xi(0)-\delta; \xi_n), \tau(\xi(0)+\delta; \xi_n) \,\}\\ &\leq \max\{\, \tau(\xi(0)-2\delta;\xi), \tau(\xi(0)+2\delta;\xi) \,\} < \epsilon
    \end{align*}
    by \eqref{Ineq:DriverTimesMonotone} and \eqref{Ineq:TimesNearZero}.  We conclude $\tau(\cdot;\xi_n)$ is uniformly close to $\tau(\cdot;\xi)$ on $[c,d]$ for large $n$.

    
    For the welding interval endpoints, by symmetry it suffices to show that $b_n \rightarrow b$.  Suppose first that $b<\infty$ and let $\epsilon >0$.  Since by the above $\tau(b-\epsilon;\xi_n) \rightarrow \tau(b-\epsilon;\xi) < \infty$, monotonicity of the hitting times yields $b -\epsilon \leq b_n$ for large $n$, and thus $b \leq \liminf_{n \rightarrow \infty} b_n$.  On the other hand, however,
    \begin{align*}
        +\infty = \tau(b+\epsilon/2; \xi) \leq \tau(b + \epsilon/2 + \| \xi_n -\xi\|_\infty ; \xi_n) \leq \tau(b + \epsilon; \xi_n)
    \end{align*}
    for all large $n$, where the first inequality is by \eqref{Ineq:DriverTimesMonotone}.  Thus $b_n \leq b+\epsilon$ for all large $n$, showing $\limsup_{n \rightarrow \infty} b_n \leq b$.
    
    If $b=+\infty$, then by the first paragraph we have $\tau(x; \xi_n) \rightarrow \tau(x;\xi)$ for any fixed, finite $x>\xi(0)$.  This implies $b_n \geq x$ for all large $n$, and thus $b_n \rightarrow \infty$.
\end{proof}

\begin{example}\label{Eg:NoEndpoints}
    We cannot conclude that the convergence of hitting times in Theorem \ref{Cor:TimesPointwise} extends all the way to the endpoints of the welding interval $[a,b]$, since, for instance, we may have $b_n <b$ for all $n$.  Consider, for example, $\xi \equiv 0$ on $0 \leq t \leq 1/4$, which generates the vertical line segment $\gamma = [0,i]$.  For the curves generating $\xi_n$, set $\alpha_n := \frac{1}{2}-\frac{1}{n}$ and consider straight line segments $\gamma_n$ from $0$ to
    \begin{align*}
        \gamma_n(\tau_n) = \alpha_n^{\alpha_n-\frac{1}{2}}(1-\alpha_n)^{\frac{1}{2}-\alpha_n} \,e^{i \alpha_n \pi} = \Big(1 + \frac{4}{n^2} + O(n^{-4}) \Big)e^{i \alpha_n \pi}.
    \end{align*}
    The conformal map $F_n: \mathbb{H} \rightarrow \mathbb{H} \backslash \gamma_n([0,\tau_n])$ taking 0 to the tip $\gamma_n(\tau_n)$ which satisfies $F_n(z) = z + O(1)$ as $z \rightarrow \infty$ is explicitly
    \begin{align*}
        F_n(z) &= \Big(z- \sqrt{\frac{\alpha_n}{1-\alpha_n}} \Big)^{\alpha_n}\Big(z + \sqrt{\frac{1-\alpha_n}{\alpha_n}} \Big)^{1- \alpha_n}\\
        &= z + \frac{4}{\sqrt{n^2-4}} - \frac{1}{2z} + O(z^{-2}), \qquad z \rightarrow \infty
    \end{align*}
    (see \cite[The Slit Algorithm]{MRZip}, for instance). From this see that the centered welding $\varphi_n:[-a_n,0] \rightarrow [0,b_n]$ for $\gamma_n$ has endpoints 
    \begin{align}\label{Eq:EndptsPathology}
        -a_n = -\sqrt{\frac{1-\alpha_n}{\alpha_n}}, \qquad b_n = \sqrt{\frac{\alpha_n}{1-\alpha_n}} = \sqrt{\frac{1/2 -1/n}{1/2+1/n}}<1,
    \end{align}
    that the total time for $\gamma_n$ is $\tau_n \equiv 1/4$ and that the driver $\lambda_n$ for $\gamma_n$ has terminal value $\lambda_n(1/4)= 4/\sqrt{n^2-4}$.  As it is well-known that the driver is $\lambda_n(t) = C_n\sqrt{t}$ (see \cite[Example 4.12]{Lawler}, for instance), $\lambda_n$ monotonically increases in $t$ and so the reversed drivers $\xi_n(t) := \lambda_n(1/4-t) -\lambda_n(1/4) \in C_0([0,1/4])$ converge uniformly to zero on $[0,1/4]$ as $n \rightarrow \infty$.  However, as they only weld $[-a_n,b_n]$, we see 
    \begin{align*}
        +\infty \equiv \tau(1; \xi_n) \not\rightarrow \tau(1; \xi) = 1/4.
    \end{align*}
    \end{example}


While the mapping $\xi \mapsto \tau$ is continuous in the sense of Theorem \ref{Cor:TimesPointwise}, the next lemma says there is no global modulus of continuity.
\begin{lemma}\label{Lemma:DriverToTimesNotUniform} 
    There exist two sequences of drivers $\xi_n, \tilde{\xi}_n \in S_0([0,T])$ such that $\|\xi_n - \tilde{\xi}_n\|_{\infty[0,T]} \rightarrow 0$ but where for some points $y_n$ and fixed $\epsilon>0$, \begin{align*}
        \tau(y_n;\xi_n)<\infty, \quad  \tau(y_n;\tilde{\xi}_n)<\infty, \quad \text{ and } \quad |\tau(y_n;\xi_n)- \tau(y_n;\tilde{\xi}_n)| \geq \epsilon
    \end{align*}
    for all $n$.
\end{lemma}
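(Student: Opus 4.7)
The construction exploits the shift formula \eqref{Eq:LoewnerShift}: for the constant driver $\tb{C}(t) \equiv \delta$, hitting times are $\tau(y; \tb{C}) = (y-\delta)^2/4$, so the difference $\tau(y; \tb{0}) - \tau(y; \tb{C}) = \delta y/2 - \delta^2/4$ grows with $y$. Choosing $\delta_n := 1/n$ and $y_n := n$ keeps this difference bounded below as $n \to \infty$. Since nonzero constant drivers are not in $S_0$, I modify by ramping up linearly: take $\xi_n \equiv 0$ and $\tilde{\xi}_n(t) := \min(t,1)/n$, both defined on $[0, T_n]$ with $T_n := n^2/4 + 1$.

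\textbf{Membership in $S_0$ and sup convergence.} Both drivers vanish at $0$. The zero driver $\xi_n$ generates the simple vertical segment $[0, 2i\sqrt{T_n}]$. For $\tilde{\xi}_n$, the estimate $|\tilde{\xi}_n(t) - \tilde{\xi}_n(s)| \leq |t-s|/n$ yields a Hölder-$1/2$ seminorm on $[0, T_n]$ bounded by $1/n < 4$, so by Lind's criterion \cite{Lind4}, $\tilde{\xi}_n \in S_0([0, T_n])$. Also $\|\xi_n - \tilde{\xi}_n\|_{\infty[0,T_n]} = 1/n \to 0$.

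\textbf{Hitting time computation.} With $y_n := n$, \eqref{Eq:ZeroHittingTime} gives $\tau(n; \xi_n) = n^2/4 < T_n$. For $\tau(n; \tilde{\xi}_n)$, decompose the flow into the ramp phase $[0,1]$ and the constant phase $[1, T_n]$. Since $\tilde{\xi}_n$ lies in $[0, 1/n]$ throughout the ramp, driver monotonicity \eqref{Ineq:LoewnerOrdering} sandwiches
\[
    h_t(n; 1/n) \leq h_t(n; \tilde{\xi}_n) \leq h_t(n; 0), \qquad t \in [0,1],
\]
where $h_t(n; 0) = \sqrt{n^2-4t}$ and $h_t(n; 1/n) = 1/n + \sqrt{(n-1/n)^2-4t}$ by \eqref{Eq:LoewnerShift} and \eqref{Eq:ZeroHittingTime}. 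Taylor expansion at $t=1$ shows both bounds equal $n - 2/n + O(1/n^3)$, hence so does $y(1) := h_1(n; \tilde{\xi}_n)$. From $t = 1$ onward, the driver is the constant $1/n$, and the shift formula gives remaining time $(y(1) - 1/n)^2/4 = (n - 3/n + O(1/n^3))^2/4 = n^2/4 - 3/2 + O(1/n^2)$. Adding the one unit of ramp time yields $\tau(n; \tilde{\xi}_n) = n^2/4 - 1/2 + O(1/n^2)$.

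\textbf{Conclusion and obstacle.} Thus $|\tau(n;\xi_n) - \tau(n;\tilde{\xi}_n)| = 1/2 + O(1/n^2)$, which exceeds $\epsilon := 1/4$ for all $n$ large, with both hitting times at most $n^2/4 < T_n$. The main technical obstacle is the quantitative ODE estimate hidden in the computation of $\tau(n; \tilde{\xi}_n)$: a sup-norm bound on the driver alone only gives (via Corollary \ref{Cor:TimesSandwich}) a spread of order $1$ around $n^2/4$, insufficient to guarantee a strict gap. Pinching $y(t;\tilde{\xi}_n)$ between the two exactly-solvable constant-driver flows on $[0,1]$ and comparing Taylor expansions provides the needed precision.
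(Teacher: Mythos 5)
Your computation is correct: sandwiching the ramp-phase flow between the two constant-driver flows via \eqref{Ineq:LoewnerOrdering}, evaluating them with \eqref{Eq:LoewnerShift} and \eqref{Eq:ZeroHittingTime}, and Taylor expanding does give $\tau(n;\xi_n)-\tau(n;\tilde{\xi}_n)=\tfrac12+O(1/n^2)$, and the mechanism is genuinely different from the paper's: you exploit that a sup-norm shift of size $\delta$ displaces the hitting time of a point at distance $y$ by roughly $\delta y/2$, which is of order one once $y\sim 1/\delta$. (A small quibble: the H\"{o}lder-$1/2$ seminorm bound $1/n$ does not follow from the Lipschitz estimate alone on an interval of length $T_n\approx n^2/4$ — that only gives $\approx 1/2$ — you also need that $\osc(\tilde{\xi}_n)\le 1/n$; either way the seminorm is $<4$, so membership in $S_0$ stands.)

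The one genuine issue is the domain. The lemma asks for two sequences in a common $S_0([0,T])$, whereas your drivers live on $[0,T_n]$ with $T_n\to\infty$, the points $y_n=n\to\infty$, and the hitting times themselves blow up. This is not cosmetic: under the Loewner scaling $\xi\mapsto r\xi(\cdot/r^2)$, compressing your construction to a fixed horizon (take $r=1/n$) rescales $y_n$ to $1$ but shrinks the hitting-time gap to $\tfrac{1}{2n^2}\to 0$, so your mechanism cannot exhibit the failure of uniform continuity on any fixed finite interval. You can rescue the letter of the statement by working on $[0,\infty)$ (your drivers extend, remain in $S_0$, and $\|\xi_n-\tilde{\xi}_n\|_{\infty[0,\infty)}=1/n$), but this proves a weaker assertion than the paper's. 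There the counterexample lives on a fixed finite interval with $y_n=2/n\to 0$ and uniformly bounded hitting times: $\xi_n$ plunges linearly to $-1$ at speed $n$, dragging the nearby point along so that $\tau(y_n;\xi_n)=1/n+1/n^2$, while the shifted driver $\tilde{\xi}_n=\xi_n-1/n$ leaves the same point unswallowed for time at least $1/36$. So your route is valid and instructive for the ``large space, large time'' failure of a global modulus of continuity, but to get the statement with a fixed $T$ you need an oscillation-based construction of the paper's type rather than points escaping to infinity.
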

\begin{proof}
    We first construct $\xi_n \in S_0$ and $\tilde{\xi}_n \in S$, and then modify the construction so that all drivers are in $S_0$.  Indeed, set
    \begin{align*}
        \xi(t) = \xi(t,\delta) := \begin{cases}
            -t/\delta & 0 \leq t \leq \delta\\
            -1 & \delta <t,
        \end{cases}
    \end{align*}
    and $\tilde{\xi}(t) = \tilde{\xi}(t,\delta) := \xi(t) - \delta$, and consider $y_0 = y_0(\delta) := 2\delta$.  Note that $\xi, \tilde{\xi} \in S$ since $\xi$ is piecewise linear.  It is easy to see from the Loewner equation \eqref{Eq:UpwardsLoewner} that on $0\leq t \leq \delta$,
    \begin{align*}
        y_0(t):= y_0(t;\xi) = 2\delta - \frac{t}{\delta},
    \end{align*} 
    and thus $y_0(\delta) = -1 + 2\delta$, yielding
    \begin{align}\label{Eq:NotUniformFastTime}
        \tau(y_0; \xi) = \delta + \frac{(2\delta)^2}{4} = \delta + \delta^2
    \end{align}
    by \eqref{Eq:ZeroHittingTime}.  On the other hand, for $\tilde{y}_0(t) := y_0(t;\tilde{\xi})$, it is not hard to see that, as $y_0 - \tilde{\xi}(0) = 3\delta$, $\tilde{y}_0(t)-\tilde{\xi}(t)$ is increasing on $[0,\delta]$ for sufficiently-small $\delta$, and so we have the coarse estimate
    \begin{align}\label{Ineq:MovementOfy0}
        \Delta \tilde{y}|_{[0,\delta]} \leq \frac{2}{3\delta}\delta = \frac{2}{3},
    \end{align}
    yielding
    \begin{align}\label{Ineq:TimeBoundedBelow}
        \tau(y_0; \tilde{\xi}) \geq \delta + \frac{1}{4}\Big( \frac{1}{3} + 2\delta\Big)^2 \geq \frac{1}{36}.
    \end{align}
    Thus setting $\xi_n(t) := \xi(t,1/n)$, $\tilde{\xi}_n(t) := \tilde{\xi}(t,1/n)$, and $y_n:=y_0(1/n)$ we have $|\tau(y_n;\xi_n)- \tau(y_n;\tilde{\xi}_n)|$ bounded below while the drivers become arbitrarily close.
    
    We can easily adjust this construction so that $\tilde{\xi}(\cdot,\delta) \in S_0$ by starting $\tilde{\xi}$ at zero and having it move in time $\delta^2$, say (i.e. extremely fast), to $\delta$, while $\xi(\cdot, \delta)$ remains at zero for $0 \leq t \leq \delta^2$.  We then proceed as in the above construction, setting $y_0$ to be the point which has image $y(\delta^2;\xi)=2\delta$ under $\xi$.  Then $y(\delta^2;\xi) < y(\delta^2;\tilde{\xi})$ and so we still obtain \eqref{Ineq:TimeBoundedBelow}, while from \eqref{Eq:NotUniformFastTime}, $\tau(y_0;\xi) = \delta + 2\delta^2$.
\end{proof}


We also easily have the following pointwise joint continuity of $(x, \xi) \mapsto \tau(x;\xi)$.  

\begin{lemma}\label{Lemma:DriverToTimesJoint}
    Let $0<T \leq \infty$ and $\xi \in S([0,T])$ be a driver, and suppose $\tau(\cdot; \xi)$ is finite on the interval $[-a,b]$.  If $x \in (a,b)$ and $\epsilon>0$, there exists $\delta = \delta(\epsilon, x,\xi)$ such that whenever $|\tilde{x}-x| < \delta$ and $\tilde{\xi} \in S([0,T])$ satisfies $\|\tilde{\xi} - \xi\|_{\infty [0,T]} < \delta$, 
    \begin{align*}
        |\tau(\tilde{x};\tilde{\xi}) - \tau(x; \xi)| < \epsilon.
    \end{align*}
\end{lemma}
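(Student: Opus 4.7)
The plan is to reduce joint continuity to the already-established one-variable continuity of $\tau(\cdot;\xi)$ (Lemma \ref{Lemma:tauContinuous}) by way of the sandwich in Corollary \ref{Cor:TimesSandwich}. Set $\delta := \|\tilde\xi - \xi\|_{\infty[0,T]} \vee |\tilde x - x|$, and estimate by the triangle inequality
\[
    |\tau(\tilde x;\tilde\xi) - \tau(x;\xi)| \le |\tau(\tilde x;\tilde\xi) - \tau(\tilde x;\xi)| + |\tau(\tilde x;\xi) - \tau(x;\xi)|.
\]
The second term is controlled directly by the modulus of continuity of the (continuous, finite) function $\tau(\cdot;\xi)$ on a closed subinterval of $(-a,b)$ containing a neighborhood of $x$. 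The first term is the content of the argument.

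Assume first that $x > \xi(0)$ (the case $x<\xi(0)$ being symmetric). For $\delta$ small enough, $\tilde x > \xi(0) + \delta$ and the whole window $[\tilde x - \delta, \tilde x + \delta]$ lies inside a fixed compact subinterval of $(-a,b)$. Corollary \ref{Cor:TimesSandwich} then gives
\[
    \tau(\tilde x - \delta; \xi) \le \tau(\tilde x; \tilde\xi) \le \tau(\tilde x + \delta; \xi),
\]
so $|\tau(\tilde x;\tilde\xi) - \tau(\tilde x;\xi)|$ is bounded by $\tau(\tilde x + \delta;\xi) - \tau(\tilde x - \delta;\xi)$, which is small again by continuity of $\tau(\cdot;\xi)$. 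Combined with the second term above we obtain the desired bound.

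The step I expect to require a little care is the boundary case $x = \xi(0)$, where $\tau(x;\xi) = 0$ but $\tilde x$ may lie on either side of $\tilde\xi(0)$, and the hypothesis $y > \xi(0) + \delta$ of Corollary \ref{Cor:TimesSandwich} cannot be maintained for $y = \tilde x$. The plan here is to use the monotonicity of $\tau(\cdot;\tilde\xi)$ about $\tilde\xi(0)$ from Lemma \ref{Lemma:tauContinuous}: since $|\tilde x - \tilde\xi(0)| \le 3\delta$, one has
\[
    \tau(\tilde x;\tilde\xi) \le \max\bigl\{\tau(\tilde\xi(0) + 3\delta;\tilde\xi),\; \tau(\tilde\xi(0) - 3\delta;\tilde\xi)\bigr\}.
\]
The arguments on the right are at distance $\ge 2\delta$ from $\xi(0)$, so Corollary \ref{Cor:TimesSandwich} now applies and bounds each by $\tau(\xi(0) \pm 5\delta;\xi)$. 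These two quantities tend to $\tau(\xi(0);\xi) = 0$ as $\delta \to 0$ by continuity of $\tau(\cdot;\xi)$ at $\xi(0)$ (Lemma \ref{Lemma:tauContinuous} again), finishing the proof.
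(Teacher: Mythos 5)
Your proof is correct and follows essentially the same route as the paper: a triangle-inequality split handled by the sandwich inequality of Corollary \ref{Cor:TimesSandwich} together with the continuity of $\tau(\cdot;\xi)$, plus a separate treatment of the boundary case $x=\xi(0)$ via the monotonicity of Lemma \ref{Lemma:tauContinuous}. The only difference from the paper's argument is minor constant bookkeeping ($3\delta$, $5\delta$ versus the paper's $\delta$, $2\delta$) in the boundary case, which is immaterial.
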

\begin{proof}
    Let $\omega(\cdot; \tau)$ be the modulus of continuity of the uniformly-continuous function $\tau(\cdot; \xi)$ on $[a,b]$, and suppose first that $x=\xi(0)$.  Choose $\delta$ such that $[x-2\delta, x+2\delta] \subset [-a,b]$ and $\omega(2\delta; \tau) < \epsilon$.  By hypothesis, $\tilde{\xi}(0)<x+\delta$, and so by Lemma \ref{Lemma:DriverTimes1} we have
    \begin{align*}
        \tau(x+\delta; \tilde{\xi}) \leq \tau(x+2\delta; \xi) < \epsilon,
    \end{align*}
    showing by monotonicity that $\tau(\tilde{x}; \tilde{\xi}) < \epsilon$ for all $\tilde{x} \in [\tilde{\xi}(0), x+\delta]$.  A parallel argument holds for $\tilde{x} \in [x-\delta, \tilde{\xi}(0)]$, and so we conclude that 
    \begin{align*}
        \tau(\tilde{x};\tilde{\xi}) = |\tau(\tilde{x};\tilde{\xi})- \tau(x;\xi)|<\epsilon
    \end{align*}
    whenever $|\tilde{x}-x|<\delta$ and $\| \tilde{\xi} - \xi\| < \delta$.

    In the case that $x \neq \xi(0)$, by symmetry we may assume $\xi(0) < x$, and we choose $\delta$ such that $[x-2\delta,x+2\delta] \subset (\xi(0), b]$ and $\omega(\delta;\tau) <\epsilon/2$.  Then if $|\tilde{x}-x|<\delta$ and $\|\tilde{\xi}-\xi\|<\delta$, we see from \eqref{Ineq:DriverTimesMonotone} that
    \begin{align*}
        |\tau(\tilde{x}; \tilde{\xi}) - \tau(\tilde{x}; \xi)| < \frac{\epsilon}{2},
    \end{align*}
    and thus 
        \begin{equation*}
        |\tau(\tilde{x};\tilde{\xi}) - \tau(x; \xi)| \leq |\tau(\tilde{x};\tilde{\xi}) - \tau(\tilde{x}; \xi)| + |\tau(\tilde{x};\xi) - \tau(x; \xi)| < \epsilon.\qedhere
    \end{equation*}

\end{proof}

\subsection{Continuity of $\xi \mapsto \tau^{-1}$}\label{Sec:DriverToTimesInverse}

The hitting times $\tau(\cdot;\xi)$ associated to $\xi \in S([0,T])$ are strictly monotonic on intervals on either side of $\xi(0)$ by Lemma \ref{Lemma:tauContinuous}.  So, as in \eqref{Eq:HittingTimesTwoFunctions}, we may consider them as two invertible functions $\tau_{\pm}$.  The following lemma says that the maps $\xi \mapsto \tau_+^{-1}$ and $\xi \mapsto \tau_-^{-1}$ are Lipschitz continuous with Lipschitz constant 1, where in each case both the domain and range are equipped with the sup norm on $[0,T]$.  

\begin{theorem}\label{Lemma:InverseTauLip}
    For $\xi, \tilde{\xi} \in S([0,T])$, $0< T \leq \infty$, we have
    \begin{align}\label{Ineq:InverseHittingLipschitz}
        \| \tau_\pm^{-1}(\cdot;\xi) - \tau_\pm^{-1}(\cdot;\tilde{\xi}) \|_{\infty[0,T]} \leq \| \xi - \tilde{\xi} \|_{\infty[0,T]}.
    \end{align}
    Furthermore, $C=1$ is the best-possible Lipschitz constant.
\end{theorem}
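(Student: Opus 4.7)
The plan is to combine Lemma \ref{Lemma:DriverTimes1} with the strict monotonicity of $\tau_\pm(\cdot;\xi)$ from Lemma \ref{Lemma:tauContinuous}: uniformly perturbing the driver by $\delta := \|\xi-\tilde\xi\|_{\infty[0,T]}$ should shift each welded point by at most $\delta$. Sharpness will follow from comparing two constant drivers via the shift identity \eqref{Eq:LoewnerShift}.

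For the upper bound I would fix $t \in [0,T]$ and set $\tilde y := \tau_+^{-1}(t;\tilde\xi)$; since $\xi,\tilde\xi \in S([0,T])$, Lemma \ref{Lemma:tauContinuous} guarantees this inverse is well defined, with $\tilde y > \tilde\xi(0)$ for $t>0$. Applying Lemma \ref{Lemma:DriverTimes1} to each $s<\tau(\tilde y;\tilde\xi)=t$ and letting $s \nearrow t$ yields $\tau(\tilde y + \delta;\xi) \geq t$. Strict monotonicity of $\tau_+(\cdot;\xi)$ then forces $\tau_+^{-1}(t;\xi) \leq \tilde y + \delta$. Swapping the roles of $\xi$ and $\tilde\xi$ gives the reverse inequality, so $|\tau_+^{-1}(t;\xi)-\tau_+^{-1}(t;\tilde\xi)| \leq \delta$. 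Taking the supremum in $t$ proves \eqref{Ineq:InverseHittingLipschitz} for $\tau_+^{-1}$. The argument for $\tau_-^{-1}$ is identical after invoking the second half of Lemma \ref{Lemma:DriverTimes1} and using that $\tau_-(\cdot;\xi)$ is strictly decreasing on $(-\infty,\xi(0)]$.

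For the sharpness claim I would exhibit equality using constant drivers $\xi \equiv 0$ and $\tilde\xi \equiv \delta$ on $[0,T]$. By \eqref{Eq:ZeroHittingTime} we have $\tau_+^{-1}(t;\xi) = 2\sqrt{t}$, and the shift identity \eqref{Eq:LoewnerShift} immediately gives $\tau_+^{-1}(t;\tilde\xi) = 2\sqrt{t} + \delta$. Consequently $\|\tau_+^{-1}(\cdot;\xi)-\tau_+^{-1}(\cdot;\tilde\xi)\|_{\infty[0,T]} = \delta = \|\xi-\tilde\xi\|_{\infty[0,T]}$, so no Lipschitz constant smaller than $1$ can work.

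I do not anticipate any substantive obstacle: the heavy lifting is packaged in Lemmas \ref{Lemma:tauContinuous} and \ref{Lemma:DriverTimes1}. The only mild subtlety is the hypothesis $\tilde y > \tilde\xi(0)$ needed to invoke Lemma \ref{Lemma:DriverTimes1}, which is automatic for $t>0$ by strict monotonicity, while the $t=0$ case reduces to the trivial bound $|\xi(0)-\tilde\xi(0)| \leq \delta$.
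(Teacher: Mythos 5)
Your proof is correct, and it reaches the bound by a slightly different, more modular route than the paper. The paper argues by contradiction directly at the level of the flow maps: fixing $t_0$ and the two welded points $y_0,\tilde y_0$, it combines the shift-plus-ordering estimate $h_t(y_0;\xi)+\delta \le h_t(y_0+\delta;\tilde\xi)$ with a strict interval-expansion inequality (if $\eta(0)<y_1<y_1+\epsilon<y_2$ then $h_t(y_1;\eta)+\epsilon<h_t(y_2;\eta)$, a consequence of \eqref{Eq:LoewnerIncrement}) to force $\|\xi-\tilde\xi\|_\infty+\epsilon\le\tilde\xi(t_0)-\xi(t_0)$, a contradiction. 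You instead take Lemma \ref{Lemma:DriverTimes1} (in the limiting form, essentially Corollary \ref{Cor:TimesSandwich}) to get $\tau(\tilde y+\delta;\xi)\ge t$ and then invert using the strict monotonicity of $\tau_+(\cdot;\xi)$ from Lemma \ref{Lemma:tauContinuous}. The underlying mechanism is the same — the shift identity \eqref{Eq:LoewnerShift} plus driver ordering \eqref{Ineq:LoewnerOrdering}, with strict expansion of intervals hiding inside the strict monotonicity you invoke — but your packaging avoids re-running the flow comparison and the $\epsilon$-contradiction, at the cost of leaning on Lemma \ref{Lemma:tauContinuous} (which requires $\xi,\tilde\xi\in S$, exactly the standing hypothesis, so nothing is lost). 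Your handling of the edge cases ($\tilde y>\tilde\xi(0)$ for $t>0$, the trivial $t=0$ case, and $\tilde y+\delta>\xi(0)$ so that monotone inversion applies) is sound. For sharpness your constant-driver example is the same as the paper's; the paper additionally records that $C=1$ remains sharp under the restriction to $S_0([0,T])$ via a two-driver construction with a fast linear ramp, but that extra refinement is not needed for the theorem as stated.
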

\noindent Note that the Lipschitz constant does not depend on $T$. Compare also Lemma \ref{Lemma:DriverToTimesNotUniform}, where we saw very different behavior for $\xi \mapsto \tau$.

\begin{proof}
    We show \eqref{Ineq:InverseHittingLipschitz} for $\tau_+^{-1}$; the argument for $\tau_-^{-1}$ is similar.  We start with two observations.  Setting $\|\xi - \tilde{\xi}\|_{\infty[0,T]} =:\delta$, we note by \eqref{Eq:LoewnerShift} and \eqref{Ineq:LoewnerOrdering} that
    \begin{align}\label{Ineq:hMapsDiff}
        h_t(y_0;\xi) + \delta = h_t(y_0 + \delta; \xi+\delta) \leq h_t(y_0+\delta; \tilde{\xi})
    \end{align}
    for all $t \leq T$ and $\xi(0)\leq y_0$ such that $t \leq \tau_+(y_0;\xi)$ (which implies $t\leq\tau_+(y_0+\delta; \tilde{\xi})$ by Lemma \ref{Lemma:DriverTimes1}).  We secondly observe that if for some driver $\eta$ we have $\eta(0) < y_1< y_1 + \epsilon < y_2$, then
    \begin{align}\label{Ineq:hMapsDiff2}
        h_t(y_1; \eta) + \epsilon < h_t(y_2; \eta)
    \end{align}
    whenever $t \leq \tau_+(y_1; \eta)$, as follows from the expansion of intervals in the upwards flow; recall \eqref{Eq:LoewnerIncrement}.  
    
    Now, fix $t_0 \in [0,T]$ and consider $y_0: = \tau_+^{-1}(t_0;\xi)$ and $\tilde{y}_0 := \tau_+^{-1}(t_0;\tilde{\xi})$, and suppose, without loss of generality, that $y_0 \leq \tilde{y}_0$.  We proceed by contradiction: if $y_0+\delta + \epsilon < \tilde{y}_0$ for some $\epsilon>0$, then by \eqref{Ineq:hMapsDiff},
    \begin{align*}
        h_{t_0}(y_0;\xi) +\delta +\epsilon \leq h_{t_0}(y_0+\delta; \tilde{\xi}) + \epsilon,
    \end{align*}
    while by \eqref{Ineq:hMapsDiff2} we have
    \begin{align*}
        h_{t_0}(y_0+\delta; \tilde{\xi}) + \epsilon < h_{t_0}(\tilde{y}_0, \tilde{\xi}),
    \end{align*}
    and thus combining these yields 
    \begin{align*}
         \|\xi - \tilde{\xi}\|_{\infty[0,T]} + \epsilon \leq h_{t_0}(\tilde{y}_0; \tilde{\xi})  - h_{t_0}(y_0; \xi) = \tilde{\xi}(t_0) - \xi(t_0), 
    \end{align*}
    a contradiction.  Hence $\tilde{y}_0 - y_0 \leq \delta$, as claimed.
    
    The fact that the Lipschitz constant is optimal is immediately evident from, say, constant drivers $\xi(t) \equiv 0$ and $\tilde{\xi}(t) \equiv \epsilon$.  Here 
    \begin{align*}
        \tau_+^{-1}(0;\tilde{\xi}) - \tau_+^{-1}(0;\xi) = \epsilon = \|\xi - \tilde{\xi} \|_{\infty}.
    \end{align*}
    Note that $C=1$ is still sharp under the more restrictive condition that $\xi, \tilde{\xi} \in S_0([0,T])$.  Indeed, consider $\xi_2(t) \equiv 0$ and $\tilde{\xi}_2(t) = t/\delta$ on $[0,\delta]$ for some small $\delta>0$, with $\tilde{\xi}_2(t) \equiv 1$ for $t \geq \delta$.  We see from \eqref{Eq:ZeroHittingTime} that $\tau_+^{-1}(\delta;\xi_2) = \delta^2/4$, while $\tau_+^{-1}(\delta;\tilde{\xi}_2) \geq \tilde{\xi}_2(\delta)=1$, and thus
    \begin{equation*}
        |\tau_+^{-1}(\delta;\xi)-\tau_+^{-1}(\delta;\tilde{\xi})| \geq 1- 2\sqrt{\delta} = ( 1- 2\sqrt{\delta}\,)\|\xi - \tilde{\xi} \|_{\infty[0,\delta]}.\qedhere
    \end{equation*}
\end{proof}

\subsection{Preliminary results on continuity properties of $\tau \mapsto \xi$}\label{Sec:TimesToDriver}

In this section, we begin to explore the question of whether the map $\tau \mapsto \xi$ can be defined, and if so, if it is continuous.  We show that for the zero driver $\tb{0}(t) \equiv 0$, if $\tau(\cdot; \xi) = \tau(\cdot; \tb{0})$, then $\xi = \tb{0}$, and thus $\tau \mapsto \xi$ is well defined at the zero.  Furthermore, if $\tau_n$ are hitting times for $\xi_n \in S_0([0,T])$ with $\tau_n \rightarrow \tau(\cdot; \tb{0})$ uniformly, then $\xi_n \rightarrow \tb{0}$ uniformly, and so $\tau \mapsto \xi$ is also continuous at $\tb{0}$.  See Theorem \ref{Thm:TimesToDriverContinuousZero} for precise statements.

Before moving to the proofs, we comment that moving from hitting times to drivers is more subtle than moving from drivers to hitting times.  One indication of this is that the proof of the above facts is, to our surprise, not trivial and seems to require some new machinery, and another indication is the breakdown of monotonicity properties that one has in the $\xi \mapsto \tau$ direction.  Indeed, given two drivers $\xi, \tilde{\xi} \in S_0([0,T])$, recall that if $\xi \leq \tilde{\xi}$, then, similar to \eqref{Ineq:LoewnerOrdering}, we have that the corresponding hitting-time functions satisfy 
\begin{align}\label{Ineq:HittingTimesMonotone}
    \tau_-\leq \tilde{\tau}_- \qquad \text{and} \qquad \tilde{\tau}_+ \leq \tau_+
\end{align}
on their common domains.  However, the converse implication does not hold: assuming the hitting times satisfy \eqref{Ineq:HittingTimesMonotone} does not imply $\xi \leq \tilde{\xi}$, as the following example shows.

\begin{example}
    Consider $\xi(t) \equiv 0$ and the driver $\tilde{\xi}$ which begins at 0 but then moves linearly with slope $1/\epsilon$ to reach value $1$ at time $\epsilon$.  Then at time 1, say, $\tilde{\xi}$ moves extremely fast to value $-\epsilon$ linearly with slope $-(1+\epsilon)/\epsilon$, then immediately back to value 1 with slope $(1+\epsilon)/\epsilon$.  We have $\xi, \tilde{\xi} \in S_0([0,1+2\epsilon])$ and the drivers are not monotone, but we claim that \eqref{Ineq:HittingTimesMonotone} still holds for $\epsilon$ sufficiently small.  
    
    Consider first the left inequality 
    \begin{align}\label{Ineq:HittingTimesMonotoneLeft}
        \tau_- \leq \tilde{\tau}_-.
    \end{align}
    This holds for all 
    \begin{align*}
        x\in [\tau_-^{-1}(1)\wedge \tilde{\tau}_-^{-1}(1),0] = [\tilde{\tau}_-^{-1}(1),0]
    \end{align*}
    since $\xi \leq \tilde{\xi}$ on $[0,1]$.  Set $x_0 := \tau_-^{-1}(1)$.  Since $\tilde{\xi}-\xi = 1$ on $[\epsilon,1]$, the flow $\tilde{x}$ of the same initial point $x_0$ under $\tilde{\xi}$ satisfies $\tilde{x}(1)<-\delta(\epsilon) <0$, where $\delta$ is increasing in $\epsilon$.  Thus, for small-enough $\epsilon$, we still have $\tilde{x}(1+2\epsilon) < \tilde{\xi}(1+2\epsilon)$, which shows that \eqref{Ineq:HittingTimesMonotoneLeft} holds on the common domain $[\tilde{\tau}_-^{-1}(1+2\epsilon),0]$ of $\tau_-$ and $\tilde{\tau}_-$.  
    
    That $\tau_+ \geq \tilde{\tau}_+$ on $[0,\tau_+^{-1}(1+2\epsilon)]$ is similarly clear: by \eqref{Eq:ZeroHittingTime} we have $\tau_+^{-1}(1+2\epsilon) = 2\sqrt{1+2\epsilon} = 2 + O(\epsilon)$, while $\tilde{\xi}$ has already eaten points past this on $[0,1]$ when $\epsilon$ is small, as $\tilde{\tau}_+^{-1}(1) \approx 3$.  
\end{example}

In short, to say much about the $\tau \mapsto \xi$ direction one has to find a mechanism for handling rapid oscillations in $\xi$.  The machinery we construct below works in the case of a constant driver $\tau(\cdot; \tb{C})$, which, without loss of generality, we may assume to be the zero driver $\tb{0}$.  Our principal tool is Lemma \ref{Lemma:FasterTimes}, which bounds the welding time $\tau$ for two points $x_0 <y_0$ in terms of how far $\xi(\tau)$ is from the initial average $(x_0+y_0)/2$.  We also find it helpful to write $\xi$ as a convex combination of the points it will weld, which leads to the following integral representation for the interval length $y(t)-x(t)$.
\begin{lemma}\label{Lemma:IntervalWidth}
    Suppose $\xi \in C([0,\tau])$ welds initial points $x_0 < \xi(0) < y_0$ at some time $\tau$.  Let $I(t) := y(t)-x(t)$ be the interval length at time $t$ and let $\alpha:[0,\tau) \rightarrow (0,1)$ be the convex combination coefficient yielding
    \begin{align}\label{Eq:XiConvexCombo}
        \xi(t) = (1-\alpha(t))x(t) + \alpha(t) y(t). 
    \end{align}
    Then 
    \begin{align}\label{Eq:IntervalWidth}
        I(t) = \sqrt{I(0)^2 -4\int_0^t \frac{ds}{\alpha(s)(1-\alpha(s))}}
    \end{align}
    for all $0 \leq t \leq \tau$.
\end{lemma}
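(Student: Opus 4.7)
The proof is essentially a direct calculation starting from the upwards Loewner equation \eqref{Eq:UpwardsLoewnerx}.

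First I would differentiate $I(t) = y(t) - x(t)$ and use \eqref{Eq:UpwardsLoewnerx} to obtain
\begin{align*}
    \dot{I}(t) = \frac{-2}{y(t)-\xi(t)} + \frac{2}{x(t)-\xi(t)} = \frac{2(y(t)-x(t))}{(y(t)-\xi(t))(x(t)-\xi(t))}.
\end{align*}
Next I would rewrite the denominator using the convex combination identity \eqref{Eq:XiConvexCombo}: since $\xi - x = \alpha(y-x) = \alpha I$ and $y - \xi = (1-\alpha)I$, we have
\begin{align*}
    (y(t)-\xi(t))(x(t)-\xi(t)) = -\alpha(t)(1-\alpha(t)) I(t)^2,
\end{align*}
so that
\begin{align*}
    \dot{I}(t) = \frac{-2}{\alpha(t)(1-\alpha(t))\, I(t)}.
\end{align*}

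Then I would observe that $\frac{d}{dt} I(t)^2 = 2 I(t)\dot{I}(t) = -4/[\alpha(t)(1-\alpha(t))]$, and integrate from $0$ to $t$ on $[0,\tau)$ to obtain the squared version of \eqref{Eq:IntervalWidth}. Since $I(t) > 0$ on $[0,\tau)$, taking square roots gives the stated formula on $[0,\tau)$, and the case $t=\tau$ follows by continuity, noting that $I(\tau)=0$ forces the integral over $[0,\tau)$ to equal $I(0)^2/4$.

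The only mild subtlety is to verify that $\alpha(t) \in (0,1)$ and that the integrand is indeed integrable on $[0,\tau)$; both are automatic from the setup ($\xi(t)$ lies strictly between $x(t)$ and $y(t)$ for $t<\tau$ by \eqref{Def:HittingTime3} combined with the fact that $x(t)$ and $y(t)$ approach $\xi$ monotonically from opposite sides), and the integrability on $[0,\tau)$ is forced a posteriori by the validity of the identity on $[0,\tau)$ together with $I(t)\geq 0$. No heavy analysis is required, and I do not anticipate a genuine obstacle.
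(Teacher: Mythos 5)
Your proposal is correct and matches the paper's argument: the paper likewise uses the Loewner equation to compute $\tfrac{d}{dt}\big(I(t)^2\big) = -4I(t)^2/[(y(t)-\xi(t))(\xi(t)-x(t))] = -4/[\alpha(t)(1-\alpha(t))]$, integrates on $[0,\tau)$, and extends to $t=\tau$ by noting $I(t)\to 0$ forces convergence of the integral. The only cosmetic difference is that you differentiate $I$ first and then square, while the paper differentiates $I^2$ directly.
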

\noindent Recall that, as usual, $x(t) = h_t(x_0;\xi)$ and $y(t) = h_t(y_0;\xi)$, with the $h_t$ maps satisfying \eqref{Eq:UpwardsLoewner}.  We note \eqref{Eq:IntervalWidth} is a generalization of the formula $t \mapsto \sqrt{y_0^2 -4t}$ for the image of a point $y_0 >0$ under the zero driver, as in this case 
\begin{align}\label{Eq:ZeroDriverInterval}
    I(t) = 2\sqrt{y_0^2-4t} = \sqrt{I_0^2 - 16t}.
\end{align}
\begin{proof}
    For $0 \leq t < \tau$, the Loewner equation \eqref{Eq:UpwardsLoewner} yields
    \begin{align*}
        \frac{d}{dt}\big(I(t)^2\big) = \frac{-4I(t)^2}{(y(t)-\xi(t))(\xi(t)-x(t))} = \frac{-4}{\alpha(t)(1-\alpha(t))},
    \end{align*}
    yielding \eqref{Eq:IntervalWidth}.  As $t \rightarrow \tau$, $I(t) \rightarrow I(\tau) = 0$, which shows the integral in the radical is convergent and that the formula also holds when $t=\tau$.
\end{proof}
Observe that when $t=\tau$, \eqref{Eq:IntervalWidth} says
\begin{align}\label{Eq:IntervalWidthEnd}
    I(0)^2 = \int_0^\tau \frac{4 ds}{\alpha(s)(1-\alpha(s))} \geq 16 \tau
\end{align}
by calculus on the function $x \mapsto \frac{4}{x(1-x)}$, thus showing 
\begin{align}\label{Ineq:MaxTime}
    \tau \leq \frac{(y_0-x_0)^2}{16}.
\end{align}
This maximum time is achieved by the driver which is constantly the average of $x_0$ and $y_0$ by \eqref{Eq:ZeroDriverInterval} (or \eqref{Eq:ZeroHittingTime}).  

The lemma which immediately yields our result on $\tau \mapsto \xi$ for the zero driver is the following.
\begin{lemma}\label{Lemma:FasterTimes}
    Suppose $\xi \in C([0,\tau])$ welds initial points $x_0 < \xi(0) < y_0$ satisfying $x_0 + y_0 = 0$ at time $\tau$ with $|\xi(\tau)| = \delta$.  Then $\tau \leq f(\delta)$ for some function $f$ which is strictly decreasing in $\delta$ and satisfies $f(0) = \frac{(y_0-x_0)^2}{16}$.
\end{lemma}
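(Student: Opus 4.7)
The plan is to reformulate everything in terms of the convex-combination coefficient $\alpha$ from Lemma \ref{Lemma:IntervalWidth}, change variables from time $t$ to interval width $I$, and then apply a split Cauchy--Schwarz to turn the constraint $|\xi(\tau)|=\delta$ into a quantitative lower bound on the excess $I_0^2 - 16\tau$ appearing in \eqref{Eq:IntervalWidthEnd}.

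First I would set $I_0 := y_0 - x_0$ and let $m(t) := (x(t)+y(t))/2$. From the Loewner equation \eqref{Eq:UpwardsLoewnerx}, using the identities $x-\xi = -\alpha I$ and $y-\xi = (1-\alpha)I$, a direct computation gives $\dot m = (1-2\alpha)/(\alpha(1-\alpha)I)$. Since $x_0+y_0 = 0$ and $x(\tau)=y(\tau)=\xi(\tau)$, this integrates to $\xi(\tau) = \int_0^\tau \dot m\,ds$. Writing $\beta := \alpha - 1/2$ (so $\alpha(1-\alpha) = 1/4-\beta^2$ and $1-2\alpha = -2\beta$), Lemma \ref{Lemma:IntervalWidth} at $t=\tau$ rearranges to $I_0^2 - 16\tau = 16\int_0^\tau \beta^2/(1/4-\beta^2)\,ds$. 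The substitution $dt = -(1/4-\beta^2)I/2\,dI$ (the map $t \mapsto I(t)$ is a strictly decreasing $C^1$ bijection from $[0,\tau]$ onto $[0,I_0]$ by Lemma \ref{Lemma:IntervalWidth}) converts both identities to
\begin{equation*}
\xi(\tau) = -\int_0^{I_0} \beta(I)\,dI, \qquad I_0^2 - 16\tau = 8\int_0^{I_0}\beta(I)^2\,I\,dI.
\end{equation*}

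Proving the lemma then reduces to bounding $\int_0^{I_0}\beta^2 I\,dI$ below by a strictly increasing function of $\delta = |\xi(\tau)|$. My next step would be a split Cauchy--Schwarz: for any $c \in (0,I_0)$,
\begin{equation*}
\delta \le \int_0^{I_0}|\beta|\,dI \le \frac{c}{2} + \Bigl(\int_c^{I_0}\beta^2 I\,dI\Bigr)^{1/2}\sqrt{\ln(I_0/c)},
\end{equation*}
using $|\beta|<1/2$ on $[0,c]$ and pairing $|\beta| = (|\beta|\sqrt{I})(1/\sqrt{I})$ on $[c,I_0]$. Since $\xi(\tau) \in (x_0,y_0)$ forces $\delta < I_0/2$, the choice $c := \delta$ is legal and yields $\int_0^{I_0}\beta^2 I\,dI \ge \delta^2/(4\ln(I_0/\delta))$, hence
\begin{equation*}
\tau \le f(\delta) := \frac{I_0^2}{16} - \frac{\delta^2}{8\ln(I_0/\delta)}, \qquad f(0) := \frac{I_0^2}{16}.
\end{equation*}
A short calculus check shows $f$ is continuous at $0$ and strictly decreasing on $[0, I_0/2]$, because $\delta \mapsto \delta^2/\ln(I_0/\delta)$ is strictly increasing there.

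The main obstacle is strategic rather than computational: an unsplit Cauchy--Schwarz on $\int|\beta|\,dI$ is useless because $\int dI/I$ diverges at $0$, and bounding with $\int \beta^2\,dI$ (without the $I$-weight) goes in the wrong direction. Splitting at a $\delta$-dependent threshold $c$ and absorbing the singular part with the pointwise bound $|\beta|<1/2$ is essentially forced by this logarithmic singularity; once one sees this, everything else is routine.
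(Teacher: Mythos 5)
Your argument is correct, and it takes a genuinely different route from the paper's proof. The paper works directly in the time variable: it fixes a threshold $\alpha_0 = \tfrac12 + \tfrac{\delta}{4y_0}$, looks at the set $T_0$ of times where the convex coefficient $\alpha(t)$ stays within $(1-\alpha_0,\alpha_0)$, and argues by cases — if $|T_0^c|$ is small, the point $y$ cannot travel far enough unless $\tau$ is reduced; if $|T_0^c|$ is large, the integrand in \eqref{Eq:IntervalWidthEnd} exceeds $16$ on a set of definite measure — producing a bound $f = f_1 \wedge f_2$. You instead change variables from $t$ to the interval width $I$, which turns the two pieces of data into exact identities, $\xi(\tau) = -\int_0^{I_0}\beta\,dI$ and $I_0^2 - 16\tau = 8\int_0^{I_0}\beta^2 I\,dI$ with $\beta = \alpha - \tfrac12$ (both computations check out, including $\dot m = (1-2\alpha)/(\alpha(1-\alpha)I)$, the use of $m(0)=0$ from $x_0+y_0=0$, and the Jacobian $dt = -(1/4-\beta^2)I\,dI/2$), and then a split Cauchy--Schwarz with the pointwise bound $|\beta|<1/2$ near $I=0$ converts $\delta \le \int_0^{I_0}|\beta|\,dI$ into the lower bound $\int_0^{I_0}\beta^2 I\,dI \ge \delta^2/(4\ln(I_0/\delta))$; the choice $c=\delta$ is indeed admissible since monotonicity of the flow forces $\delta < I_0/2$, and your monotonicity check of $\delta \mapsto \delta^2/\ln(I_0/\delta)$ is right. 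What your approach buys is a single closed-form bound $f(\delta) = \tfrac{I_0^2}{16} - \tfrac{\delta^2}{8\ln(I_0/\delta)}$ rather than a minimum of two case-dependent expressions, and for small $\delta$ it is quantitatively stronger (deficit of order $\delta^2/\ln(1/\delta)$ versus the paper's $\sim \delta^4$ from the $f_2$ branch); like the paper's proof, it uses only that $\xi$ welds the two points at time $\tau$, not $\xi \in S$. The paper's case analysis, by contrast, avoids the change of variables and any integral inequalities, staying entirely with elementary comparisons of the flow, which is arguably more robust if one wanted to localize the argument in time. The only minor polish needed is to note that $t \mapsto I(t)$ is $C^1$ only on $[0,\tau)$ (its derivative blows up at $\tau$), so the substitution should be performed on $[0,\tau-\epsilon]$ and passed to the limit; since all integrands involved are bounded or of one sign, this is immediate.
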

\noindent Note that, as the proof will make clear, we do not require that $\xi \in S([0,\tau])$, only that $\xi$ welds $x_0$ and $y_0$ at time $\tau$.  
\begin{proof}
    By symmetry it suffices to consider the case $\delta > 0$.  With $\alpha(t)$ as in \eqref{Eq:XiConvexCombo}, define $\alpha_0:= \frac{1}{2}+\frac{\delta}{4y_0}$ so that
    \begin{align*}
        (1-\alpha_0)x_0 + \alpha_0y_0 = \frac{\delta}{2},
    \end{align*}
    and consider the set
    \begin{align*}
        T_0 := \{ \, t \, : \, 1-\alpha_0 < \alpha(t) < \alpha_0 \,\},
    \end{align*}
    which is open and thus a finite or countable collection of open intervals.  For a constant $\epsilon_0 = \epsilon_0(\delta, y_0)>0$ to be determined below, we either have $(i)$ $|T_0^c| < \epsilon_0$ or $(ii)$ $|T_0^c| \geq \epsilon_0$, with $|T_0^c|$ the Lebesgue measure of $T_0^c$.  We proceed to explicitly construct a suitable bounding function for $\tau$ in each case.  
    
    As the intuition is easily obscured, it may be worth explicitly stating: the times $T_0$ represent the ``reasonable'' region where $\xi(t)$ is ``close'' to the average $(x(t)+y(t))/2$.  If $|T_0|$ is large, as in case $(i)$, then $\tau$ cannot be too large, or else $y$ would move past $\delta$ (as for the zero driver and $\tau$ approaching $(y_0-x_0)^2/16$).  If $|T_0|$ is ``small,'' as in $(ii)$, then the interval $I(t)$ is collapsing quickly since $\xi(t)$ is ``close'' to one of $x(t),y(t)$ often, which also makes the time ``small.''
    
    Beginning with case $(i)$, we estimate the movement of $y_0$ on $T_0$.  Since 
    \begin{align}\label{Ineq:IntervalWidthIntegralEst}
        \int_0^t \frac{4 ds}{\alpha(s)(1-\alpha(s))} \geq 16t,
    \end{align} 
    we see from \eqref{Eq:IntervalWidth} that $I(t) \leq \sqrt{I(0)^2 - 16t}$.  Thus, using the Loewner equation \eqref{Eq:UpwardsLoewner}, we find for $t \in T_0$ that
    \begin{align*}
        -\dot{y}(t) = \frac{2}{(1-\alpha(t))I(t)} \geq \frac{2}{\alpha_0\sqrt{I(0)^2 - 16t}},
    \end{align*}
    and that therefore that the cumulative change $\Delta y|_{T_0}$ of $y$ over times $t \in T_0$ satisfies
    \begin{align*}
        y_0 - \delta \geq - \Delta y|_{T_0} &\geq \int_{T_0}\frac{2dt}{\alpha_0\sqrt{I(0)^2 - 16t}}\\
        &\geq \int_0^{|T_0|}\frac{2dt}{\alpha_0\sqrt{I(0)^2 - 16t}}\\
        &= \frac{I(0)}{4\alpha_0} - \frac{\sqrt{I(0)^2-16|T_0|}}{4\alpha_0}.
    \end{align*}
    Here the first inequality is by the assumption that $y(\tau) = \delta$, while the second line follows by monotonicity of the integrand.  Solving for $|T_0|$ yields
    \begin{align}\label{Ineq:TimesFastBound1}
        \frac{I(0)^2}{16}- \frac{1}{16}\big( I(0)-4\alpha_0(y_0-\delta) \big)^2 \geq |T_0| > \tau - \epsilon_0,
    \end{align}
    and so if we choose 
    \begin{align}
        \epsilon_0 := \frac{1}{32}\big( I(0)-4\alpha_0(y_0-\delta) \big)^2= \frac{\delta^2(y_0+\delta)^2}{32y_0^2} >0, \label{Eq:epsilon0}
    \end{align}
    we see that \eqref{Ineq:TimesFastBound1} yields
    \begin{align*}
        \tau \leq \frac{(y_0-x_0)^2}{16}- \frac{\delta^2(y_0+\delta)^2}{32y_0^2} =: f_1(\delta),
    \end{align*}
    which is decreasing in $\delta$ and has the desired boundary value at $\delta=0$.
    
    We turn, then, to the second case $|T_0^c| \geq \epsilon_0$, with $\epsilon_0 = \epsilon_0(\delta)$ still given by \eqref{Eq:epsilon0}.  Using \eqref{Eq:IntervalWidthEnd}, the symmetry of $x \mapsto x(1-x)$ around $1/2$, and \eqref{Ineq:IntervalWidthIntegralEst}, we find that
    \begin{align*}
        \frac{(y_0-x_0)^2}{4} &= \int_{T_0^c}\frac{ds}{\alpha(s)(1-\alpha(s))} + \int_{T_0}\frac{ds}{\alpha(s)(1-\alpha(s))}\\
        &\geq \frac{|T_0^c|}{\alpha_0(1-\alpha_0)} + 4|T_0|\\
        &= \frac{\big( 1 -4 \alpha_0(1-\alpha_0) \big)|T_0^c|}{\alpha_0(1-\alpha_0)} + 4\tau\\
        &\geq \frac{\big( 1 -4 \alpha_0(1-\alpha_0) \big)\epsilon_0}{\alpha_0(1-\alpha_0)} + 4\tau
    \end{align*}
    since $1 - 4\alpha_0(1-\alpha_0) >0$ (recall $x_0$ and $y_0$ are symmetric about zero and $\delta/2 > 0$, and so $\alpha_0 \neq 1/2$).  We thus have
    \begin{align*}
        \tau \leq \frac{(y_0-x_0)^2}{16} - \Big(\frac{1}{4 \alpha_0(1-\alpha_0)} -1\Big)\epsilon_0(\delta) =: f_2(\delta),
    \end{align*}
    which again is decreasing in $\delta$ with initial value $f_2(0) =(y_0-x_0)^2/16$. Combining the cases, we thus see
    \begin{equation*}
        \tau \leq f_1(\delta) \wedge f_2(\delta) =: f(\delta).\qedhere
    \end{equation*}
\end{proof}
\noindent We can immediately conclude the following from Lemma \ref{Lemma:FasterTimes}.
\begin{theorem}\label{Thm:TimesToDriverContinuousZero}
    The map hitting-time-to-driver map $\tau \mapsto \xi$ is well defined and continuous at the zero driver $\tb{0}$.  More precisely, 
    \begin{enumerate}[$(i)$]
        \item If $\xi \in C([0,T])$ has hitting times $\tau(x;\xi) = x^2/4$ on some interval $[-y_0,y_0]$, then $\xi(t) \equiv 0$ on $0 \leq t \leq y_0^2/4$.
        \item If $\tau_n:[a_n,b_n] \rightarrow \mathbb{R}$ are hitting times for drivers $\xi_n \in S_0([0,T_n])$ with $a_n \rightarrow a$, $b_n \rightarrow -a$, and where $\tau_n$ converges uniformly to hitting time function $\tau_{\tb{0}}$ of the zero driver on compact subsets of $(a, -a)$, then $\xi_n$ converges uniformly to $\tb{0}$ on any $[0,T'] \subset [0,T)$, where $T = \tau_{\tb{0}}(a) = a^2/4$.
    \end{enumerate}
\end{theorem}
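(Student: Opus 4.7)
The plan is to derive both statements directly from Lemma~\ref{Lemma:FasterTimes}, using the shifting formula~\eqref{Eq:LoewnerShift} in part $(ii)$ to reduce to the symmetric setting that the lemma requires.

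For $(i)$, first observe that $\tau(0;\xi) = 0$ forces $\xi(0) = 0$. Fix any $y \in (0, y_0]$; the hypothesis gives $\tau(-y;\xi) = y^2/4 = \tau(y;\xi)$, so $-y$ and $y$ are welded together at time $\tau := y^2/4$. Since this initial pair is symmetric about $0$, Lemma~\ref{Lemma:FasterTimes} applies (with $y_0$-role played by $y$) and yields $\tau \leq f(|\xi(\tau)|)$ with $f(0) = (2y)^2/16 = \tau$. The strict monotonicity of $f$ then forces $\xi(y^2/4) = 0$, and letting $y$ range over $(0, y_0]$ together with $\xi(0) = 0$ yields $\xi \equiv 0$ on $[0, y_0^2/4]$.

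For $(ii)$, I would argue by contradiction. If the claim fails, after relabeling a subsequence there exist $\epsilon > 0$ and $t_n \in [0, T']$ with $|\xi_n(t_n)| \geq \epsilon$, and by compactness we may further assume $t_n \to t^* \in [0, T']$. Let $y_n^- < 0 < y_n^+$ denote the pair welded by $\xi_n$ at time $t_n$, i.e.\ $y_n^\pm := \tau_{n,\pm}^{-1}(t_n)$; these exist for large $n$ because $\tau_n \to \tau_{\tb{0}}$ on compact subsets of $(a, -a)$ and $t_n \leq T' < T$. Monotonicity of $\tau_{n,\pm}^{-1}$ (Lemma~\ref{Lemma:tauContinuous}) combined with Lemmas~\ref{Lemma:InversesConverge} and~\ref{Lemma:OurDini} upgrades the hypothesis to $\tau_{n,\pm}^{-1} \to \pm 2\sqrt{\cdot}$ uniformly on compact subsets of $[0, T)$, so that $y_n^\pm \to \pm 2\sqrt{t^*}$ and $c_n := (y_n^- + y_n^+)/2 \to 0$. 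When $t^* = 0$, the monotonicity of the upwards flow~\eqref{Eq:UpwardsLoewnerx} traps $\xi_n(t_n) \in [y_n^-, y_n^+]$, an interval shrinking to $\{0\}$, contradicting $|\xi_n(t_n)| \geq \epsilon$. When $t^* > 0$, I would shift via~\eqref{Eq:LoewnerShift}: the driver $\tilde\xi_n := \xi_n - c_n$ on $[0, t_n]$ welds the exactly symmetric pair $y_n^\pm - c_n$ at time $t_n$, and $|\tilde\xi_n(t_n)| = |\xi_n(t_n) - c_n| \geq \epsilon - |c_n| \geq \epsilon/2$ eventually. Lemma~\ref{Lemma:FasterTimes}, combined with the quantitative gap $f_1(0) - f_1(\delta) \geq \delta^2/32$ visible in its proof (from $(y_0+\delta)^2 \geq y_0^2$ in the formula for $f_1$), then gives
\[
t_n \leq \frac{(y_n^+ - y_n^-)^2}{16} - \frac{|\tilde\xi_n(t_n)|^2}{32};
\]
sending $n \to \infty$ produces $t^* \leq t^* - \epsilon^2/128$, the desired contradiction.

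The step I expect to require the most care is the case $t^* > 0$ in $(ii)$: Lemma~\ref{Lemma:FasterTimes} demands initial data that is \emph{exactly} symmetric about $0$, so the shift by $c_n$ is essential rather than cosmetic, and I need a quantitative form of the lemma's strict monotonicity with a gap constant that is uniform in $n$. Reading off the explicit $f_1$ bound from inside the proof of Lemma~\ref{Lemma:FasterTimes} supplies such a gap; treating $f$ as a black box would not suffice for the limiting argument.
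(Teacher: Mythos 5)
Your part $(i)$ is the paper's argument written out in full: the paper simply cites Lemma \ref{Lemma:FasterTimes}, and your expansion (the symmetric pair $(-y,y)$ is welded at time $y^2/4=f(0)$, so strict monotonicity of $f$ forces $\xi(y^2/4)=0$, with $\xi(0)=0$ coming from $\tau(0;\xi)=0$ or continuity) is exactly what is intended.

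Part $(ii)$ takes a genuinely different route. The paper argues via Arzel\`a--Ascoli: uniform boundedness is immediate, equicontinuity of $\{\xi_n\}$ is proved by contradiction using Lemma \ref{Lemma:FasterTimes}, and then every subsequential limit is identified with $\tb{0}$ by sandwiching its hitting times via Corollary \ref{Cor:TimesSandwich} and invoking part $(i)$. You instead contradict uniform convergence directly: choose $t_n$ with $|\xi_n(t_n)|\ge\epsilon$, locate the welded pair $y_n^\pm$ via the uniform convergence of $\tau_{n,\pm}^{-1}$ (the same use of Lemmas \ref{Lemma:OurDini} and \ref{Lemma:InversesConverge} as in the paper), recenter by $c_n\to 0$ using \eqref{Eq:LoewnerShift} so that Lemma \ref{Lemma:FasterTimes} applies to an exactly symmetric pair, and pass to the limit. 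This is shorter (no compactness argument, no identification of subsequential limits, and part $(i)$ is not even needed for $(ii)$), and your explicit recentering together with the separate case $t^*=0$ is in fact more careful than the paper at the corresponding step, where the lemma is applied to pairs that are only asymptotically symmetric.

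The one step that is not justified as written is the quantitative gap. You use $\tau\le f_1(\delta)$ with $f_1(0)-f_1(\delta)\ge\delta^2/32$, but inside the proof of Lemma \ref{Lemma:FasterTimes} the bound $f_1$ is established only in its case $(i)$, i.e.\ when $|T_0^c|<\epsilon_0$; in case $(ii)$ only $f_2$ is obtained, and
\begin{align*}
    f_2(0)-f_2(\delta)=\Big(\tfrac{1}{4\alpha_0(1-\alpha_0)}-1\Big)\epsilon_0(\delta)
    =\frac{\delta^2}{4y_0^2-\delta^2}\cdot\frac{\delta^2(y_0+\delta)^2}{32y_0^2},
\end{align*}
which is of order $\delta^4/y_0^2$ for $\delta$ small relative to $y_0$, not $\delta^2/32$. (The concluding line ``$\tau\le f_1(\delta)\wedge f_2(\delta)$'' in that proof is itself a slip: the case analysis only yields $\tau\le f_1(\delta)\vee f_2(\delta)$, which is all the lemma's qualitative statement needs, so you cannot lean on the $\wedge$ for your inequality.) Hence the displayed bound $t_n\le\frac{(y_n^+-y_n^-)^2}{16}-\frac{|\tilde\xi_n(t_n)|^2}{32}$ is unsupported. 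The argument nevertheless survives: all you need is a gap that is uniformly positive in $n$, and since $\delta_n:=|\tilde\xi_n(t_n)|\ge\epsilon/2$ while the half-widths $Y_n=(y_n^+-y_n^-)/2\to 2\sqrt{t^*}\le 2\sqrt{T'}$ stay bounded (and $\delta_n\le Y_n$ automatically), the available gap $\min\{f_1(0)-f_1(\delta_n),\,f_2(0)-f_2(\delta_n)\}\ge \frac{(\epsilon/2)^4}{128\,Y_n^2}$ is bounded below by a positive constant depending only on $\epsilon$ and $T'$. Sending $n\to\infty$ then still gives $t^*\le t^*-c$ with $c>0$, so the proof closes once $\epsilon^2/128$ is replaced by this smaller constant.
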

\begin{proof}
    $(i)$ Lemma \ref{Lemma:FasterTimes} implies that $\xi(y^2/4) = 0$ for each $0 \leq y \leq y_0$.
    
    $(ii)$  We use Arzela-Ascoli to show $\{\xi_n\}$ is precompact on any $[0,T'] \subset [0,T)$, and then show all subsequential limits are $\tb{0}$.
    
    Since $a_n < \xi_n(t) <b_n$ for all $0 \leq t \leq T$, the sequence $\{\xi_n\}$ is uniformly bounded.  If it is not equicontinuous on $[0,T'] \subset [0,T)$, then there exist a subsequence $\{\xi_{n_k}\}$, $\epsilon_1 >0$, and times $t_{n_k},t' \in [0,T']$ satisfying $t_{n_k} \rightarrow t'$, such that
    \begin{align}\label{Ineq:NotEquicont}
        |\xi_{n_k}(t_{n_k}) - \xi_{n_k}(t')| \geq \epsilon_1
    \end{align}
    for all $k$, as follows from negating the definition of equicontinuity and using the compactness of $[0,T']$.
    
    We note that $\tau_{n_k,\pm}^{-1} \xrightarrow{u} \tau_{z,\pm}^{-1}$ on $[0,T']$, as follows from choosing an appropriate compact of $(-a,a)$ and using lemmas \ref{Lemma:OurDini} and \ref{Lemma:InversesConverge}. Thus the points $x_{n_k}, y_{n_k}$ and $x_{n_k}', y_{n_k}'$ welded together by $\xi_{n_k}$ at times $t_{n_k}$ and $t'$, respectively, satisfy 
    \begin{align*}
        \max\{\,|x_{n_k} - x'|, |x_{n_k}'-x'|, |y_{n_k}-y'|, |y_{n_k}'-y'| \,\} \rightarrow 0
    \end{align*}
    as $k \rightarrow \infty$, where $x' := \tau_{\tb{0},-}^{-1}(t')$ and $y' := \tau_{\tb{0},+}^{-1}(t')$.  In particular, $x_{n_k} + y_{n_k} \rightarrow 0$ and  $x_{n_k}' + y_{n_k}' \rightarrow 0$.  Since by \eqref{Ineq:NotEquicont} we have either $|\xi_{n_k}(t_{n_k})| \geq \epsilon_1/2$ or $|\xi_{n_k}(t_{n_k}')| \geq \epsilon_1/2$, by Lemma \ref{Lemma:FasterTimes} there is some $\delta>0$ such that
    \begin{align*}
        \min\{\tau_{n_k}(x_{n_k}), \tau_{n_k}(x_{n_k}')\} < \tau(t') - \delta
    \end{align*}
    for all large $k$.  Since this contradicts $\tau_{n_k} \xrightarrow{u} \tau_{\tb{0}}$ on a sufficiently-large compact of $(-a,a)$, we conclude that the sequence $\{\xi_n\}$ is, indeed, equicontinuous on $[0,T']$.  
    
    Take any subsequential limit $\xi_{n_k} \rightarrow \tilde{\xi}$ on $[0,T']$.  We wish to show that the hitting time function $\tilde{\tau}$ for $\tilde{\xi}$ is the same as $\tau_{\tb{0}}$ and then apply part $(i)$ of the theorem. (Unfortunately we cannot jump to use Theorem \ref{Cor:TimesPointwise} because, \emph{a priori}, we do not know that $\tilde{\xi} \in S$.)  Fix $0 < y_0$ and $0 < \epsilon < y_0/2$.  Then by \eqref{Ineq:DriverTimesMonotone},
    \begin{align*}
        \tau_{n_k}(y_0-\epsilon) \leq \tilde{\tau}(y_0) \leq \tau_{n_k}(y_0+\epsilon)
    \end{align*}
    for all sufficiently-large $k$, and so in the limit we find
    \begin{align*}
        \frac{(y_0-\epsilon)^2}{4} \leq \tilde{\tau}(y_0) \leq \frac{(y_0+\epsilon)^2}{4}.
    \end{align*}
    As this holds for any sufficiently-small $\epsilon$, we conclude that $\tilde{\tau}(y_0) = y_0^2/4$, and similarly that $\tilde{\tau}(-y_0) = y_0^2/4$.  By assumption on $\tau_n$ and part $(i)$, we conclude $\tilde{\xi} = \tb{0}$ on $[0,T']$.  Hence all subsequential limits are the same, and $\xi \rightarrow \tb{0}$ uniformly on $[0,T']$.  
\end{proof}

\subsubsection{Positive evidence for the non-zero case}\label{Sec:PositiveEvidence}

In considering whether hitting time convergence implies driver convergence, we ran some numerical experiments to gain intuition.  We thought we may have generated a method to produce a counter-example, but the simulations actually produced convergent driving functions, thus yielding some positive evidence for a generalization of Theorem \ref{Thm:TimesToDriverContinuousZero}$(ii)$.  We share the construction and this evidence here.  

Let $\mathcal{T}_0$ be the collection of hitting-time functions $\tau(\cdot; \xi)$ generated by $\xi \in S_0([0,T])$, and choose $\tau(\cdot; \xi) \in \mathcal{T}_0$ corresponding to $\xi \neq \tb{0}$.  Say $\tau:[-a,b] \rightarrow \mathbb{R}$ with $\tau(a) = \tau(b) = T$.  At stage $n$ consider the pairs 
\begin{align*}
    (x_j,y_j) = \big( \tau_-^{-1}(jT/n; \xi ),\; \tau_+^{-1}(jT/n; \xi ) \big), \qquad j =1,\ldots, n,
\end{align*}
that $\xi$ welds together at times $jT/n$.  We construct a driver $\xi_n \in S_0$ that also welds $x_j$ to $y_j$ in time $jT/n$ but has ``large" oscillations, which will potentially destroy uniform converge to $\xi$.  

Indeed, start $\xi_n$ at zero and have it move linearly with extremely-large speed until it reaches $(x_1(\epsilon;\xi_n)+y_1(\epsilon;\xi_n))/2$ at some time $\epsilon$.  The true welding time for $(x_1,y_1)$ is
\begin{align*}
    \frac{T}{n} \leq \frac{(y_1-x_1)^2}{16}
\end{align*}
by \eqref{Ineq:MaxTime}, where the maximum is attained by the driver which is constantly $(y_1+x_1)/2$.  So if we set $\xi_n$ to be $(x_1(\epsilon;\xi_n)+y_1(\epsilon;\xi_n))/2$ for $\epsilon \leq t \leq \frac{T}{n}-\epsilon$, it does not weld these points, as $\epsilon$ is very small and $\xi$ is not generally constant.  At time $\frac{T}{n}-\epsilon$ we then use a large oscillation in $\xi_n$ to weld both points together in time $\epsilon$ (see below in the proof of Theorem \ref{Thm:WeldingToOthersNotContinuous} for a careful description on how to do this).  Thus $\xi_n$ welds the pair $(x_1,y_1)$ in exactly time $t_1=T/n$.  

We next have $\xi_n$ move in time $\epsilon$ to the average $\big(x_2(t_1+\epsilon; \xi_n)+y_2(t_1+\epsilon; \xi_n)\big)/2$ of the next pair.  Intuitively, since $\xi_1$ stayed ``far away'' from $x_1$ and $y_1$ until the large $\epsilon$-oscillation at the end, $x_2$ and $y_2$ have not moved as far as they normally would have under $\xi$ in $[0,T/n]$, and thus we expect
\begin{align*}
    \frac{T}{n} < \frac{\big( x_2(t_1; \xi_n)+y_2(t_1; \xi_n) \big)^2}{16}.
\end{align*}
So we have $\xi_n$ wait constantly at the average $(x_2(t_1+\epsilon; \xi_n)+y_2(t_1+\epsilon; \xi_n))/2$ on $\frac{T}{n} +\epsilon \leq t \leq \frac{2T}{n} - \epsilon$, and then use another large oscillation to weld the images of $x_2$ and $y_2$ together in time $\epsilon$.  Thus $\xi_n$ also welds $(x_2,y_2)$ in exactly the same amount of time as $\xi$.

We continue in this way to weld all the $(x_j,y_j)$ at times identical to $\tau(\cdot; \xi)$, which by the monotonicity of $\tau_\pm$ implies $\tau(\cdot;\xi_n) \xrightarrow{u} \tau(\cdot;\xi)$.  

We wondered if in the $\epsilon \rightarrow 0$ limit the oscillations of $\xi_n$ at the end of each interval $[jT/n,(j+1)T/n]$ would grow so large that $\|\xi_n - \xi\|_{\infty[0,T]}$ would be bounded below.  In numerical simulations this was not the case, however, suggesting that the convergence of hitting times is fairly robust.  We show a characteristic simulation in Figure \ref{Fig:Simulations}, and submit this as positive evidence for a generalization of Theorem \ref{Thm:TimesToDriverContinuousZero}$(ii)$.

We comment that our constructed driver $\xi_n$ is not the worst-case scenario, as it is not necessarily the one that minimizes the shrinking of the last interval $y_n-x_n$ on $0 \leq t \leq \frac{(n-1)T}{n}$ among all the drivers that weld the pairs $(x_j,y_j)$ at time $jT/n$ for $1 \leq j \leq n-1$.  If such an extremal driver could be approximated and simulated, one could then follow it with the above construction to weld $(x_n,y_n)$ on $\frac{n-1}{nT} \leq t \leq T$ and produce the largest-possible oscillation in $\xi_n$ for welding these last points.  The intuition is that the pair $(x_n,y_n)$ is ``far away'' when the driver is welding the other points, and so does not move very much; if we minimize its movement we then have the opportunity for an extremely large fluctuation in $\xi_n$.  It would be interesting to see if the oscillation would remain macroscopic in the $n \rightarrow \infty$ limit.

\begin{figure}
	\begin{center}
	\includegraphics[scale=0.7]{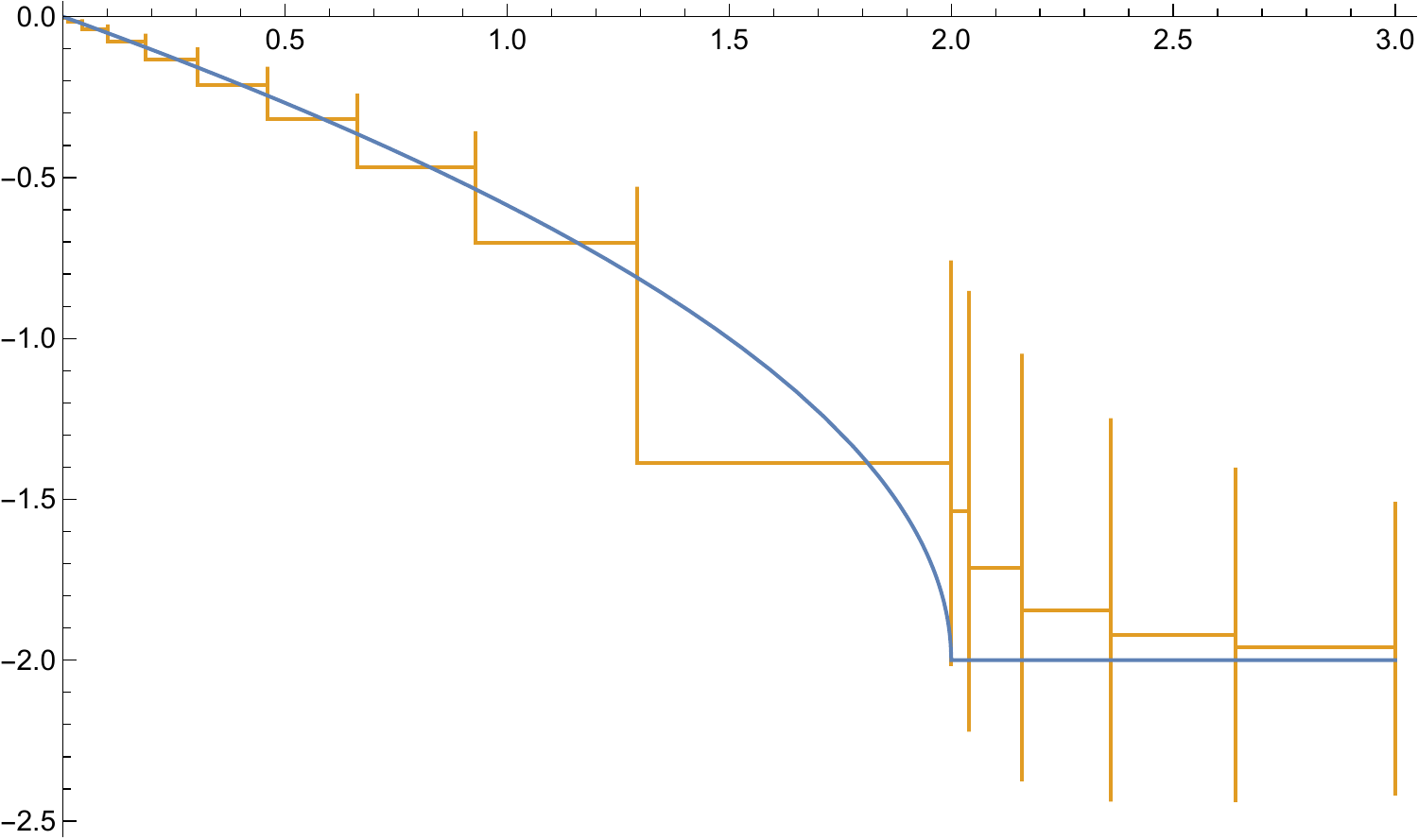}\\
	\bigskip
	\includegraphics[scale=0.7]{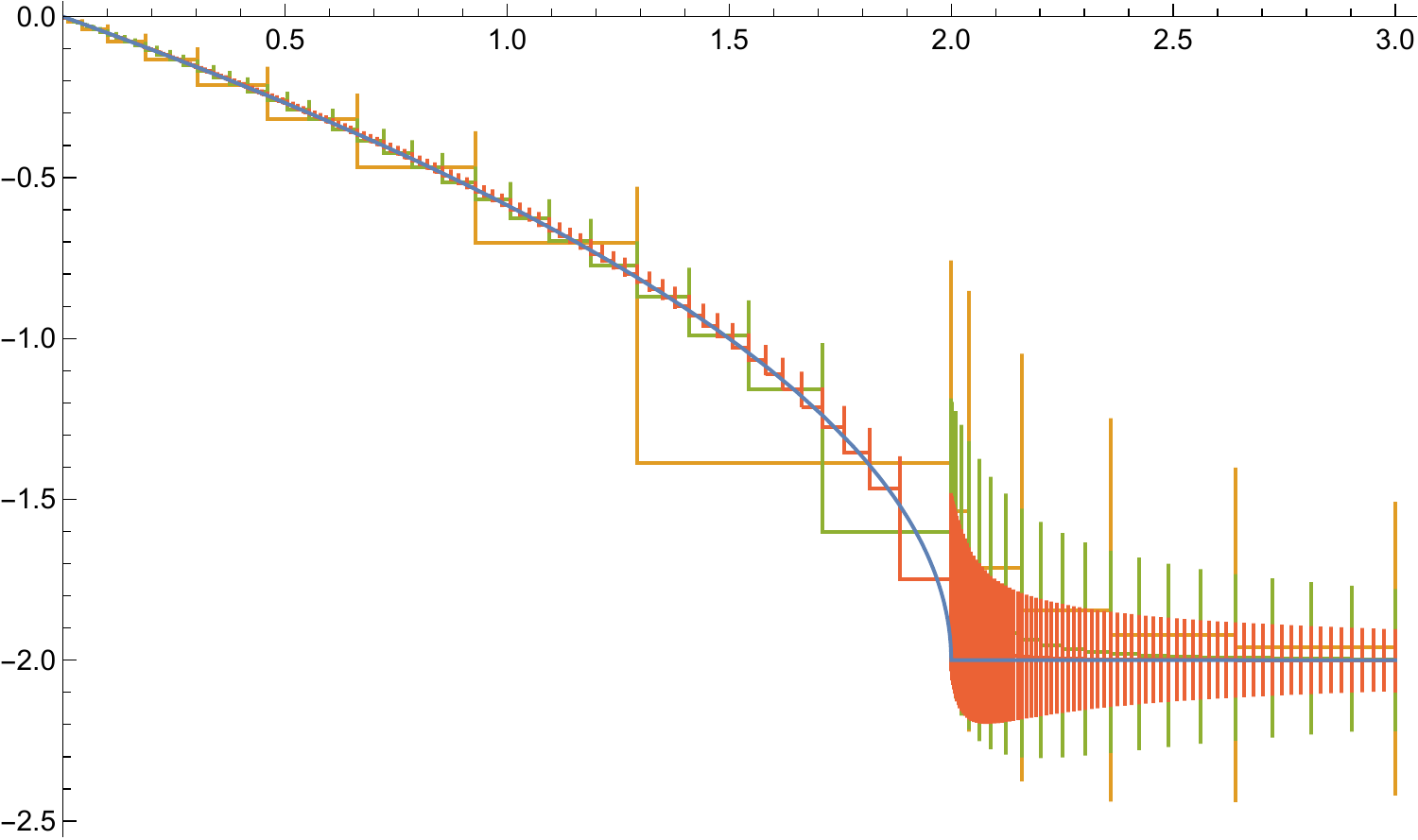}
    \end{center}
	\caption{\small Numerical simulations of drivers $\xi_n$ as constructed in \S\ref{Sec:PositiveEvidence}, in the $\epsilon \rightarrow 0$ limit (where the drivers move instantly and hence yield vertical lines).  Here the true driver $\xi$, pictured in blue, generates a line with angle $\pi/3$ to the positive reals on $0 \leq t \leq 2$, followed by a vertical line segment on $2 \leq t \leq 3$.  With a coarse approximation, the constructed driver $\xi_{n_1}$ in the upper figure struggles to stay close, but we see in the lower figure that the $\xi_n$ still converges as the mesh becomes finer.  Convergence at the corner is apparently only logarithmically fast in the mesh size, however (we quadruple the points in each iteration).  Given the difficulty at the non-smooth point of $\xi$, it is unclear whether the $\xi_n$ would converge for a rough fractal driver, such as, for instance, $\xi$ for the Von Koch snowflake \cite[Figure 2]{Lindrohdespace}.}
	\label{Fig:Simulations}
\end{figure}


\section{Continuity properties of $\tau \mapsto \varphi$ and $\xi \mapsto \varphi$}\label{Sec:Welding}


\subsection{Continuity of $\tau \mapsto \varphi$ and $\xi \mapsto \varphi$}\label{Sec:BlankToWelding}
For convenience in this section, we have our drivers start at zero, drawing them from $S_0$.  We also restrict to times $T<\infty$, as we will need domain compactness for uniform continuity.  

The continuity of $\tau \mapsto \varphi$ follows immediately from writing $\varphi = \tau_+^{-1} \circ \tau_-$ via Lemma \ref{Lemma:tauContinuous} and then using lemmas \ref{Lemma:OurDini} and \ref{Lemma:InversesConverge}; we leave the details to the interested reader.
\begin{lemma}\label{Lemma:TimesToWeldingConvergence}
    Let $0<T < \infty$ and let $\xi,\xi_n \in S_0([0,T])$.   Let $\tau:[-a,b]\rightarrow \mathbb{R}$ and  $\tau_n:[-a_n,b_n] \rightarrow \mathbb{R}$ be the hitting times for $\xi$ and $\xi_n$, respectively, and $\varphi:[-a,0] \rightarrow [0,b]$ and $\varphi_n:[-a_n,0] \rightarrow [0,b_n]$ their conformal weldings.  If $a_n \rightarrow a$, $b_n \rightarrow b$ and $\tau_n \xrightarrow{u} \tau$ on any $[c,d] \subset (-a,b)$, then $\varphi_n \xrightarrow{u} \varphi$ on $[-d,0]$ for any $[-d,0]\subset (-a,0]$.
\end{lemma}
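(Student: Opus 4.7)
The plan follows the hint: use Lemma \ref{Lemma:tauContinuous} to write $\varphi = \tau_+^{-1}\circ \tau_-$ and $\varphi_n = \tau_{n,+}^{-1}\circ \tau_{n,-}$ (both sides are well-defined since each $\tau_\pm$ and $\tau_{n,\pm}$ is continuous and strictly monotone), then obtain uniform convergence of each factor separately and conclude by composition.

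Fix $[-d,0] \subset (-a,0]$, and introduce a buffer by choosing $d' \in (d,a)$ so that $[-d',0] \subset (-a,0]$ as well. Since $[-d',0] \subset (-a,b)$, the hypothesis yields $\tau_{n,-} \xrightarrow{u} \tau_-$ on $[-d',0]$, hence also on $[-d,0]$. Set $M := \tau_-(-d')$, which is a strict upper bound for $\tau_-(-d)$ (since $\tau_-$ is strictly decreasing on $[-a,0]$); by the uniform convergence on $[-d',0]$, also $\tau_{n,-}(x) \leq M$ for every $x \in [-d,0]$ once $n$ is large. Thus $\tau_{n,-}([-d,0]) \subset [0,M]$ for all such $n$.

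Next I would show $\tau_{n,+}^{-1} \xrightarrow{u} \tau_+^{-1}$ on $[0,M]$. Pointwise convergence of $\tau_{n,+} \to \tau_+$ on $[0,\varphi(-d')]$ follows from the hypothesis $\tau_n \xrightarrow{u} \tau$ on the compact subset $[0,\varphi(-d')] \subset (-a,b)$, and Lemma \ref{Lemma:InversesConverge} (applied to the relevant restrictions) then yields pointwise convergence $\tau_{n,+}^{-1} \to \tau_+^{-1}$ on $[0,M]$. Since each $\tau_{n,+}^{-1}$ is monotone increasing and the limit $\tau_+^{-1}$ is continuous on the compact interval $[0,M]$, Lemma \ref{Lemma:OurDini} upgrades pointwise to uniform convergence.

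Finally, for $x \in [-d,0]$ and $n$ large, I would estimate
\begin{align*}
    |\varphi_n(x) - \varphi(x)| \leq \bigl|\tau_{n,+}^{-1}(\tau_{n,-}(x)) - \tau_+^{-1}(\tau_{n,-}(x))\bigr| + \bigl|\tau_+^{-1}(\tau_{n,-}(x)) - \tau_+^{-1}(\tau_-(x))\bigr|.
\end{align*}
The first term is bounded by $\|\tau_{n,+}^{-1} - \tau_+^{-1}\|_{\infty[0,M]}$, which tends to zero by the previous step. The second tends to zero uniformly in $x$ by the uniform continuity of $\tau_+^{-1}$ on the compact $[0,M]$, combined with $\tau_{n,-} \xrightarrow{u} \tau_-$ on $[-d,0]$. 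Hence $\varphi_n \xrightarrow{u} \varphi$ on $[-d,0]$. The only real bookkeeping issue is ensuring that $\tau_{n,+}^{-1}$ is defined on all of $[0,M]$ for large $n$; this is precisely what the buffer $d' > d$ secures, via $M < T$ and $b_n \to b$.
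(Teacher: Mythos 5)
Your proposal is correct and follows exactly the route the paper indicates: writing $\varphi = \tau_+^{-1}\circ\tau_-$ via Lemma \ref{Lemma:tauContinuous}, using Lemma \ref{Lemma:InversesConverge} and Lemma \ref{Lemma:OurDini} to get uniform convergence of $\tau_{n,+}^{-1}$ on a suitable compact time interval, and concluding by the triangle inequality. The buffer $d'$ and the range bookkeeping you add are precisely the ``details left to the interested reader,'' and they are handled correctly (indeed, since each $\tau_{n,\pm}$ is a bijection onto $[0,T]$, the domain issue for $\tau_{n,+}^{-1}$ on $[0,M]$ with $M<T$ is automatic).
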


We are more interested in the continuity of $\xi \mapsto \varphi$, the content of the following theorem.
\begin{theorem}\label{Thm:DriverToWeldingConvergence}
    Let $0<T < \infty$ and let $\xi,\xi_n \in S_0([0,T])$.   Let $\varphi:[-a,0] \rightarrow [0,b]$ and  $\varphi_n:[-a_n,0] \rightarrow [0,b_n]$ be the conformal weldings for $\xi$ and $\xi_n$, respectively.  If $\xi_n \xrightarrow{u} \xi$ on $[0,T]$, then $a_n \rightarrow a$, $b_n \rightarrow b$, and $\varphi_n \xrightarrow{u} \varphi$ on $[-c,0]$ for any $[-c,0]\subset (-a,0]$.
\end{theorem}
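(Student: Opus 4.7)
The proof strategy is to factor the map $\xi \mapsto \varphi$ through the hitting times, writing $\xi \mapsto \tau \mapsto \varphi$, and then composing the two continuity results already established.

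First, I would invoke Theorem~\ref{Cor:TimesPointwise} on the given drivers $\xi_n \xrightarrow{u} \xi$ in $S_0([0,T])$. This immediately yields the convergence of the welding interval endpoints, $a_n \to a$ and $b_n \to b$, as well as uniform convergence $\tau_n \xrightarrow{u} \tau$ on any compact subinterval $[c,d] \subset (-a,b)$. This handles the endpoint conclusion of the theorem directly.

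Next, to obtain $\varphi_n \xrightarrow{u} \varphi$ on a fixed $[-c,0] \subset (-a,0]$, I would apply Lemma~\ref{Lemma:TimesToWeldingConvergence}. The hypotheses of that lemma are precisely what the first step delivered: $\xi, \xi_n \in S_0([0,T])$, convergence of the welding interval endpoints, and uniform convergence of the hitting time functions on compact subintervals of $(-a,b)$. The conclusion of the lemma then gives the desired uniform welding convergence on $[-c,0]$.

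The main subtlety, rather than an obstacle, is verifying that the domain conventions of Theorem~\ref{Cor:TimesPointwise} and Lemma~\ref{Lemma:TimesToWeldingConvergence} match up so that the composition makes sense, particularly when $-a$ or $b$ is infinite. Since $c < a$, we can choose $d > 0$ with $[-c,d] \subset (-a,b)$ and apply Theorem~\ref{Cor:TimesPointwise} on $[-c,d]$; this suffices because the welding $\varphi_n$ restricted to $[-c,0]$ is determined by the left hitting time on $[-c,0]$ and the right hitting time on $[0, \varphi_n(-c)]$, both of which live inside compact subintervals of $(-a,b)$ once $n$ is large (using the convergence $a_n \to a$, $b_n \to b$, and continuity of $\varphi$ at $-c$). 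With these domain matters verified, the proof reduces to two lines citing the prior results.
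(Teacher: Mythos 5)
Your composition is correct: the conclusion of Theorem~\ref{Cor:TimesPointwise} ($a_n \rightarrow a$, $b_n \rightarrow b$, and $\tau_n \xrightarrow{u} \tau$ on compact subintervals) is exactly the hypothesis of Lemma~\ref{Lemma:TimesToWeldingConvergence}, so the chain $\xi \mapsto \tau \mapsto \varphi$ gives the theorem, and your domain bookkeeping (choosing $[-c,d] \subset (-a,b)$; here $a,b<\infty$ anyway since $T<\infty$) is if anything more careful than needed. The paper, however, assembles the same ingredients differently: rather than citing Lemma~\ref{Lemma:TimesToWeldingConvergence} (whose proof it leaves to the reader), it writes $\varphi = \tau_+^{-1}\circ\tau_-$ and estimates $|\varphi_n(x)-\varphi(x)|$ directly, using the Lipschitz continuity of $\xi \mapsto \tau_+^{-1}$ from Theorem~\ref{Lemma:InverseTauLip} for the first term and the modulus of continuity of $\tau_+^{-1}$ together with Theorem~\ref{Cor:TimesPointwise} for the second. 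This buys two things your route does not: the near-quantitative bound \eqref{Ineq:AlmostQuantitative}, which is reused verbatim in the proof of Lemma~\ref{Lemma:DriverToWeldingJoint}, and the sharp endpoint estimate $\max\{|a_n-a|,|b_n-b|\} \leq \|\xi_n-\xi\|_{\infty[0,T]}$ of \eqref{Ineq:EndpointsQuantWeldings}, whereas you get the endpoint convergence only qualitatively from Theorem~\ref{Cor:TimesPointwise}. Your route buys brevity at the cost of these quantitative byproducts (and implicitly defers the real work to the omitted proof of Lemma~\ref{Lemma:TimesToWeldingConvergence}); both are valid proofs of the stated theorem.
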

\begin{remark}
    As in Theorem \ref{Cor:TimesPointwise}, note that we also have
    \begin{align}\label{Ineq:EndpointsQuantWeldings}
        \max\{\, |a_n-a|, |b_n-b| \,\} \leq \|\xi_n-\xi\|_{\infty[0,T]}
    \end{align}
    by Theorem \ref{Lemma:InverseTauLip}.
\end{remark}

\begin{proof}
    Since $a_n \rightarrow a$ and $b_n \rightarrow b$ by \eqref{Ineq:EndpointsQuantWeldings}, $\varphi_n \in C_0([-c,0])$ for all large $n$.  Noting $\varphi = \tau_+^{-1}\circ \tau_-$ and writing $\tau_\pm$ and $\tau_{n,\pm}$ for the hitting-time functions generated by $\xi$ and $\xi_n$, respectively, we see for $-c \leq x \leq 0$ and sufficiently large $n$ that
    \begin{align}
        |\varphi_n(x) &- \varphi(x)|\notag \\
        &= |\tau_{n,+}^{-1}\big(\tau_{n,-}(x)\big)-\tau_+^{-1}\big(\tau_-(x)\big)| \notag\\
        &\leq |\tau_{n,+}^{-1}\big(\tau_{n,-}(x)\big)-\tau_+^{-1}\big(\tau_{n,-}(x)\big)| + |\tau_+^{-1}\big(\tau_{n,-}(x)\big) - \tau_+^{-1}\big(\tau_{-}(x)\big)| \notag\\
        &\leq \| \xi_n - \xi \|_{\infty[0,T]} + \omega\big(\|\tau_{n,-} - \tau_-\|_{\infty[-c,0]}; \tau_+^{-1}\big)\label{Ineq:AlmostQuantitative}
    \end{align}
    by Theorem \ref{Lemma:InverseTauLip}, where $\omega(\cdot; \tau_+^{-1})$ is the modulus of continuity of $\tau_+^{-1}$ on $[0,T]$, which exists by Lemma \ref{Lemma:tauContinuous}.  Since $\|\tau_{n,-} - \tau_-\|_{\infty[-c,0]} \rightarrow 0$ by Theorem \ref{Cor:TimesPointwise}, we have $\| \varphi_n - \varphi \|_{\infty[-c,0]} \rightarrow 0$.
\end{proof}

\begin{remark}
    Note that if one could control $\omega(\cdot; \tau_+^{-1})$ by the modulus of continuity $\omega(\cdot; \xi)$ of $\xi$ on $[0,T]$, then \eqref{Ineq:AlmostQuantitative} would yield a type of quantitative estimate.
\end{remark}

We saw in Theorem \ref{Cor:TimesPointwise} a sense in which $\xi \mapsto \tau$ is continuous, and in Lemma \ref{Lemma:DriverToTimesNotUniform} that it is not uniformly so.  The map $\xi \mapsto \varphi$ is analogous: it is continuous in the sense of Theorem \ref{Thm:DriverToWeldingConvergence} but there is again no universal modulus of continuity.

\begin{lemma}\label{Lemma:DriverToWeldingNotUniform}
There exist drivers $\xi_n, \tilde{\xi}_n \in S_0([0,T])$ such that $\|\xi_n - \tilde{\xi}_n\|_{\infty[0,T]} \rightarrow 0$ but where there exists $x$ welded by both $\xi_n$ and $\tilde{\xi}_n$ and $\epsilon>0$ such that   \begin{align}\label{Ineq:WeldingsFar}
    |\varphi_n(x)- \tilde{\varphi}_n(x)| \geq \epsilon
\end{align}
for all $n$, where $\varphi_n$ and $\tilde{\varphi}_n$ are the weldings for $\xi_n$ and $\tilde{\xi}_n$, respectively.
\end{lemma}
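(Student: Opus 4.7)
The plan is to construct two explicit sequences of drivers $\xi_n, \tilde{\xi}_n \in S_0([0,T])$ that are close in sup norm but whose weldings differ by a bounded-below amount at some point, in the spirit of the parallel Lemma~\ref{Lemma:DriverToTimesNotUniform}.  The main challenge here, compared with that earlier lemma, is that by Theorem~\ref{Lemma:InverseTauLip} each of $\tau_-$ and $\tau_+^{-1}$ is $1$-Lipschitz in $\xi$, so small driver perturbations translate into only $O(\|\xi_n-\tilde\xi_n\|_\infty)$ shifts of the welded pair at \emph{each fixed time}.  The welding discrepancy must therefore arise from the \emph{fixed-$x$} mismatch built into the composition $\varphi = \tau_+^{-1}\circ \tau_-$.

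My first attempt would be to recycle the Lemma~\ref{Lemma:DriverToTimesNotUniform} construction: piecewise-linear $\xi_n$ that rushes from $0$ down to $-1$ on $[0,\delta_n]$ and then remains at $-1$, with $\tilde\xi_n$ essentially a $-\delta_n$-shift of $\xi_n$ adjusted to lie in $S_0$. That lemma already shows the point $y_n := 2\delta_n$ has macroscopically different hitting times under the two drivers (cf.\ \eqref{Eq:NotUniformFastTime}--\eqref{Ineq:TimeBoundedBelow}), and a short constant-driver analysis then gives the left welding partners of $y_n$ as being separated by approximately $1$ (one near $-1$, under $\xi_n$, because $\xi_n$ spends most of $[0,T]$ at $-1$; the other near $-2$, under $\tilde\xi_n$, because $y_n$ is swallowed only at a much later time when the constant-$(-1-\delta_n)$ welding has progressed further). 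This is a macroscopic discrepancy in the \emph{inverse} weldings at the common right point $y_n$, and I would try to translate it into a discrepancy of $\varphi_n$ vs.\ $\tilde\varphi_n$ at one of the preimages via the shift formula \eqref{Eq:LoewnerShift} and the explicit constant-driver welding.

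If the pure shift construction turns out to merely produce a welding gap of size $O(\delta_n)$ or $O(\sqrt{\delta_n})$ at every common $x$ (which naive use of \eqref{Eq:LoewnerShift} suggests may happen, since a rigid shift of $\xi$ shifts $\varphi$ rigidly), I would modify the construction by appending a second \emph{non-shift} phase to the drivers—for instance, letting $\xi_n$ return from $-1$ back towards $0$ after time $\delta_n$ while $\tilde\xi_n$ performs the same return but delayed by $\delta_n^2$, so that the two sup-norm-$O(\delta_n)$ drivers cannot be matched by a single global translation.  The hitting-time discrepancy from Lemma~\ref{Lemma:DriverToTimesNotUniform} survives this modification, while the asymmetric second phase breaks the welding's rigid shift, producing a fixed-size $x$-independent residual discrepancy in $\varphi_n(x_n) - \tilde\varphi_n(x_n)$ for a suitable $x_n$ (possibly depending on $n$, as is the case with $y_n$ in Lemma~\ref{Lemma:DriverToTimesNotUniform}).

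The main obstacle is showing the welding gap does not get absorbed into the $1$-Lipschitz control of Theorem~\ref{Lemma:InverseTauLip}:  an ``internal'' discrepancy of $\tau_-(x;\xi_n)$ versus $\tau_-(x;\tilde\xi_n)$ could in principle be cancelled by a compensating discrepancy of $\tau_+^{-1}$.  Verifying that they do not cancel will require a careful simultaneous accounting of both legs of $\varphi = \tau_+^{-1}\circ\tau_-$ via Lemma~\ref{Lemma:tauContinuous}, Corollary~\ref{Cor:TimesSandwich} and the shift formula \eqref{Eq:LoewnerShift}, together with an explicit choice of $x_n$ that tracks precisely the asymmetry between $\xi_n$ and $\tilde\xi_n$.
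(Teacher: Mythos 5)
You have the right construction and the right core estimate: reusing the drivers of Lemma~\ref{Lemma:DriverToTimesNotUniform}, the welding partner of $y_n=2\delta_n$ is near $-1$ under $\xi_n$ and near $-2$ under $\tilde\xi_n$, i.e.\ the \emph{inverse} weldings differ by a definite amount at the common right-hand point $y_n$. This is exactly what the paper proves (via \eqref{Ineq:MovementOfy0} plus the fact that the partners barely move during the rush phase), with the coarser bound $|\varphi_n^{-1}(y_n)-\tilde\varphi_n^{-1}(y_n)|\geq 1/4$. The genuine gap is in converting this into the statement \eqref{Ineq:WeldingsFar}, which concerns the \emph{forward} weldings at a common point $x$. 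Your plan to evaluate ``at one of the preimages'' fails: setting $x_n:=\varphi_n^{-1}(y_n)\approx -1$ and $\tilde x_n:=\tilde\varphi_n^{-1}(y_n)\approx -2$, monotonicity of $\tilde\varphi_n$ gives $0\leq\tilde\varphi_n(x_n)\leq\tilde\varphi_n(\tilde x_n)=y_n=2\delta_n$, so $|\varphi_n(x_n)-\tilde\varphi_n(x_n)|\leq 2\delta_n\to 0$; a similar computation shows both weldings at $\tilde x_n$ tend to $0$ as well. For this pair of drivers the forward weldings are close precisely where they are flat; the macroscopic discrepancy lives entirely in the steep, inverse direction.

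The missing ingredient is a one-line trick the paper uses: reflect the drivers across the origin. With $-\xi_n,-\tilde\xi_n\in S_0$, the flow symmetry gives $\varphi_{-\xi_n}(-y_n)=-\varphi_n^{-1}(y_n)$ and $\varphi_{-\tilde\xi_n}(-y_n)=-\tilde\varphi_n^{-1}(y_n)$, so the common left point $x=-y_n$ witnesses \eqref{Ineq:WeldingsFar} with $\epsilon=1/4$. Relatedly, your fallback worry that the shift structure forces $|\varphi_n-\tilde\varphi_n|=O(\delta_n)$ at every common $x$ misreads the consequence of \eqref{Eq:LoewnerShift}: a rigid shift gives roughly $\tilde\varphi_n(x)\approx\varphi_n(x+\delta_n)-\delta_n$, and an argument shift of size $\delta_n$ produces an order-one change in value wherever the welding (equivalently its inverse, after reflection) is steep at scale $\delta_n$ --- which is exactly what this construction arranges. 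The appended ``second non-shift phase'' is therefore unnecessary, and since you leave both it and the ``careful simultaneous accounting'' unverified, the proposal as written does not yet yield the lemma.
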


\begin{proof}
    We use the same drivers $\xi(\cdot, \delta)$ and $\tilde{\xi}(\cdot,\delta)$ as in the proof Lemma \ref{Lemma:DriverToTimesNotUniform} (including the modification in the last paragraph so that both are in $S_0$), and show that 
    \begin{align*}
        |\varphi^{-1}(y_0)- \tilde{\varphi}^{-1}(y_0)| \geq \epsilon,
    \end{align*}
    with $y_0$ the same point selected in that proof (to obtain \eqref{Ineq:WeldingsFar} reflect the drivers across the origin).  Consider the situation at time $t=\delta^2+\delta$, when $\xi$ and $\tilde{\xi}$ arrive at $-1$ and $-1-\delta$, respectively, for the first time.  We have that $y(\delta^2+\delta; \xi) = -1+2\delta$, and thus the point $x_0$ which will weld to it is, at that moment, at 
    \begin{align}\label{Eq:WeldingNotUniformx}
        x(\delta^2+\delta;\xi) = -1-2\delta.
    \end{align}
    By \eqref{Ineq:MovementOfy0}, 
    \begin{align*}
        y(\delta^2+\delta; \tilde{\xi}) - \tilde{\xi}(\delta^2+\delta) \geq 3\delta + \frac{1}{3},
    \end{align*}  
    and thus the point $\tilde{x}_0$ which welds to $y_0$ under $\tilde{\xi}$ satisfies
    \begin{align*}
        \tilde{x}(\delta^2+\delta; \tilde{\xi}) \leq -1-\delta - \Big( 3\delta + \frac{1}{3} \Big) = -4\delta - \frac{4}{3},
    \end{align*}
    and so by \eqref{Eq:WeldingNotUniformx} we see
    \begin{align}\label{Ineq:PreWeldFar}
        x(\delta^2+\delta; \xi) - \tilde{x}(\delta^2+\delta; \tilde{\xi}) \geq \frac{1}{3} + 2\delta.
    \end{align}
    As $\delta \rightarrow 0+$, simple estimates with the Loewner equation show $x(\delta^2+\delta;\xi) - x_0 \rightarrow 0$ and $\tilde{x}(\delta^2+\delta;\tilde{\xi}) - \tilde{x_0} \rightarrow 0$, and thus \eqref{Ineq:PreWeldFar} shows
    \begin{align*}
        x_0 -\tilde{x}_0 = \varphi^{-1}(y_0) - \tilde{\varphi}^{-1}(y_0) \geq \frac{1}{4}
    \end{align*}
    for all small $\delta$.
\end{proof}
As in Lemma \ref{Lemma:DriverToTimesJoint}, we can easily conclude from above results that $(x;\xi) \mapsto \varphi(x;\xi)$ is pointwise jointly continuous in $x$ and $\xi$.  This generalizes \cite[Thm. 1.2$(b)$]{Vlad}, as our statement covers all drivers $\xi$ generating simple curves, not just the a.s. Brownian motion case.

\begin{lemma}\label{Lemma:DriverToWeldingJoint}
    Let $0<T < \infty$ and $\xi \in S_0([0,T])$ a driver with welding $\varphi:[-a,0] \rightarrow [0,b]$. If $x \in (-a,0]$ and $\epsilon>0$, there exists $\delta = \delta(\epsilon, x,\xi)$ such that whenever $\tilde{x}\leq 0$ and $\tilde{\xi} \in S_0([0,T])$ satisfy 
    \begin{align*}
        \max\{\, |\tilde{x}-x|, \|\tilde{\xi} - \xi\|_{\infty [0,T]}  \,\} < \delta,
    \end{align*}
    then
    \begin{align*}
        |\varphi(\tilde{x};\tilde{\xi}) - \varphi(x; \xi)| < \epsilon.
    \end{align*}
\end{lemma}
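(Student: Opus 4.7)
The plan is to mimic the structure of the proof of Lemma \ref{Lemma:DriverToTimesJoint}, with Theorem \ref{Thm:DriverToWeldingConvergence} playing the role that Theorem \ref{Cor:TimesPointwise} played there, and with the continuity and strict monotonicity of $\varphi(\cdot;\xi)$ from Lemma \ref{Lemma:tauContinuous} (together with the identity $\varphi=\tau_+^{-1}\circ \tau_-$) taking the place of the analogous facts about $\tau(\cdot;\xi)$. I would split into the interior case $x \in (-a,0)$ and the boundary case $x=0$.

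For $x \in (-a,0)$, I would first pick a closed interval $[-c,0] \subset (-a,0]$ whose interior contains $x$. By \eqref{Ineq:EndpointsQuantWeldings}, choosing $\delta < (a-c)/2$ guarantees that every $\tilde\xi$ with $\|\tilde\xi-\xi\|_{\infty[0,T]}<\delta$ welds all of $[-c,0]$, so in particular $\tilde x$ lies in the welding domain of $\tilde\xi$ whenever $|\tilde x - x|<\delta$ and $\delta$ is small. I would then apply the triangle inequality
\begin{align*}
|\varphi(\tilde x;\tilde\xi) - \varphi(x;\xi)| \leq |\varphi(\tilde x;\tilde\xi) - \varphi(\tilde x;\xi)| + |\varphi(\tilde x;\xi) - \varphi(x;\xi)|.
\end{align*}
The second term is below $\epsilon/2$ whenever $|\tilde x - x|$ is small enough, by the (uniform) continuity of $\varphi(\cdot;\xi)$ on the compact $[-c,0]$, which follows from Lemma \ref{Lemma:tauContinuous}. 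For the first term, Theorem \ref{Thm:DriverToWeldingConvergence} (reformulated in $\epsilon$-$\delta$ form by contradiction: otherwise extract a sequence $\tilde\xi_n \xrightarrow{u} \xi$ with $\|\varphi(\cdot;\tilde\xi_n) - \varphi(\cdot;\xi)\|_{\infty[-c,0]} \not\to 0$) gives uniform closeness on $[-c,0]$ once $\|\tilde\xi-\xi\|_{\infty[0,T]}$ is sufficiently small.

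For the boundary case $x = 0$, I would exploit the monotonicity of $\varphi(\cdot;\xi)$. Note $\varphi(0;\xi)=0$ and, because $\varphi = \tau_+^{-1}\circ\tau_-$ with $\tau_-$ strictly decreasing and $\tau_+^{-1}$ strictly increasing, $\varphi(\cdot;\xi)$ and $\varphi(\cdot;\tilde\xi)$ are both strictly decreasing on their (left) welding domains. I would fix $\delta_0 > 0$ small enough that $-\delta_0 \in (-a,0)$ and $\varphi(-\delta_0;\xi) < \epsilon/2$, possible by continuity of $\varphi(\cdot;\xi)$ at $0$. Then choose $\delta \leq \delta_0$ such that $\|\tilde\xi-\xi\|_{\infty[0,T]}<\delta$ forces $-\delta_0$ to lie in the welding domain of $\tilde\xi$ (via \eqref{Ineq:EndpointsQuantWeldings}) and $|\varphi(-\delta_0;\tilde\xi) - \varphi(-\delta_0;\xi)| < \epsilon/2$ (via Theorem \ref{Thm:DriverToWeldingConvergence}). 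Monotonicity then yields, for any $\tilde x \in [-\delta,0]$,
\begin{align*}
0 \leq \varphi(\tilde x;\tilde\xi) \leq \varphi(-\delta_0;\tilde\xi) < \varphi(-\delta_0;\xi) + \epsilon/2 < \epsilon.
\end{align*}

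I do not anticipate a serious obstacle here; everything reduces to packaging the sequential continuity from Theorem \ref{Thm:DriverToWeldingConvergence} as an $\epsilon$-$\delta$ statement and combining it with the pointwise continuity and monotonicity of $\varphi(\cdot;\xi)$. The only real technicality, as in Lemma \ref{Lemma:DriverToTimesJoint}, is verifying that $\tilde x$ lies in the welding domain of $\tilde\xi$; this is handled uniformly by the sharp Lipschitz bound on welding-domain endpoints from Theorem \ref{Lemma:InverseTauLip}.
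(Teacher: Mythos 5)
Your proposal is correct and follows essentially the same route as the paper's proof: the same triangle-inequality split into $|\varphi(\tilde x;\tilde\xi)-\varphi(\tilde x;\xi)|$ (controlled by the uniform welding convergence of Theorem \ref{Thm:DriverToWeldingConvergence}, whose quantitative core is \eqref{Ineq:AlmostQuantitative}) plus a modulus-of-continuity term for $\varphi(\cdot;\xi)$, with Theorem \ref{Lemma:InverseTauLip} guaranteeing $\tilde x$ lies in the welding domain of $\tilde\xi$. The only cosmetic differences are that the paper invokes the explicit three-term estimate \eqref{Ineq:AlmostQuantitative} (making $\delta$ more directly traceable) instead of repackaging the sequential statement by contradiction, and it treats $x=0$ within the same argument on $[(x-a)/2,0]$, so your separate monotonicity case at the endpoint, while valid, is not needed.
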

\begin{proof}
    Write $\tilde{\varphi}$ and $\varphi$ for $\varphi(\cdot; \tilde{\xi})$ and $\varphi(\cdot; \xi)$, respectively, and similarly for their drivers and hitting times.  For $\frac{x-a}{2}\leq \tilde{x} \leq 0$, we have that $\tilde{\varphi}(\tilde{x})$ is defined whenever $\|\tilde{\xi}-\xi\|_{\infty[0,T]}$ is sufficiently small by Theorem \ref{Lemma:InverseTauLip}, and for such $\tilde{x}$, the triangle inequality yields
    \begin{align*}
        |\tilde{\varphi}(\tilde{x}) - \varphi(x)| &\leq \|\tilde{\varphi} - \varphi\|_{\infty[(x-a)/2,0]} + \omega(|\tilde{x}-x|; \varphi)\\
        &\leq \| \tilde{\xi} - \xi \|_{\infty[0,T]} + \omega\big(\|\tilde{\tau} - \tau\|_{\infty[(x-a)/2,0]}; \tau_+^{-1}\big) + \omega(|\tilde{x}-x|; \varphi)
    \end{align*}
    by \eqref{Ineq:AlmostQuantitative}, where $\omega(\cdot; \varphi)$ is the modulus of continuity of $\varphi$ on $[-a,0]$, and $\omega(\cdot; \tau_+^{-1})$ that for $\tau_+^{-1}$ on $[0,T]$.  As $\xi$ and $\varphi$ are fixed, $\tau_+^{-1}$ is determined by $\xi$, and the hitting times are continuous by Theorem \ref{Cor:TimesPointwise}, we may choose $\delta$ small enough such that each of the three terms is less than $\epsilon/3$.
\end{proof}

\subsection{$\varphi \mapsto \tau$ and $\varphi \mapsto \xi$ are not continuous}\label{Sec:WeldingCounterexample}
\begin{figure}[h]
	\begin{center}
	\includegraphics[width=0.9\textwidth]{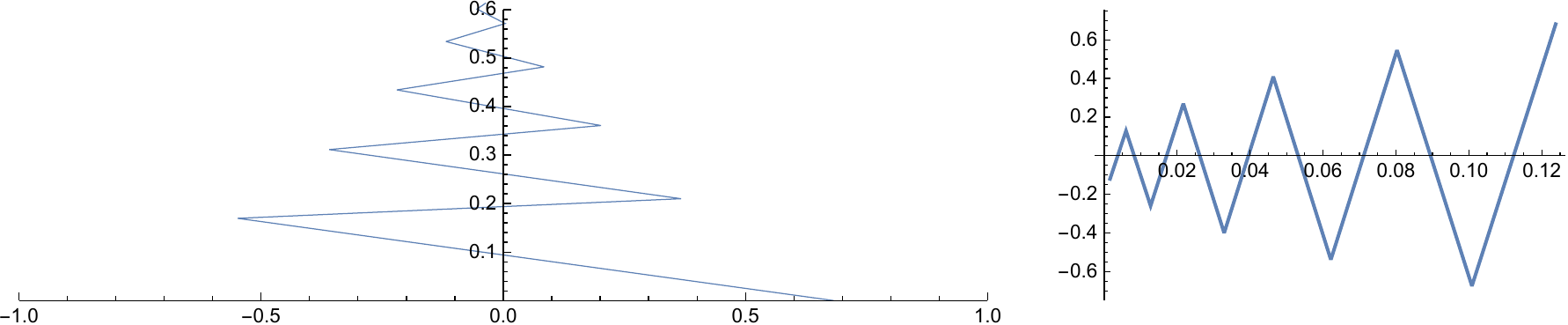}\\
	\bigskip
	\includegraphics[width=0.9\textwidth]{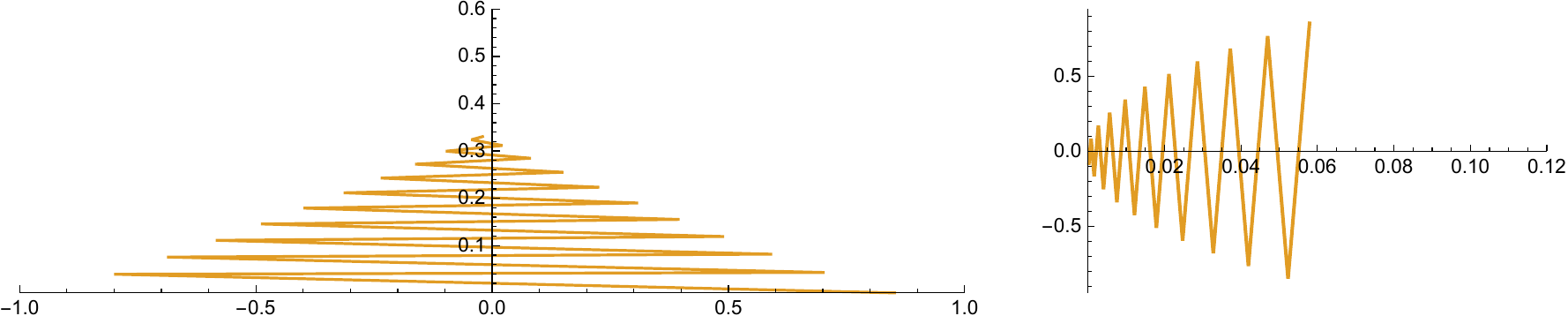}\\
	\bigskip
	\includegraphics[width=0.9\textwidth]{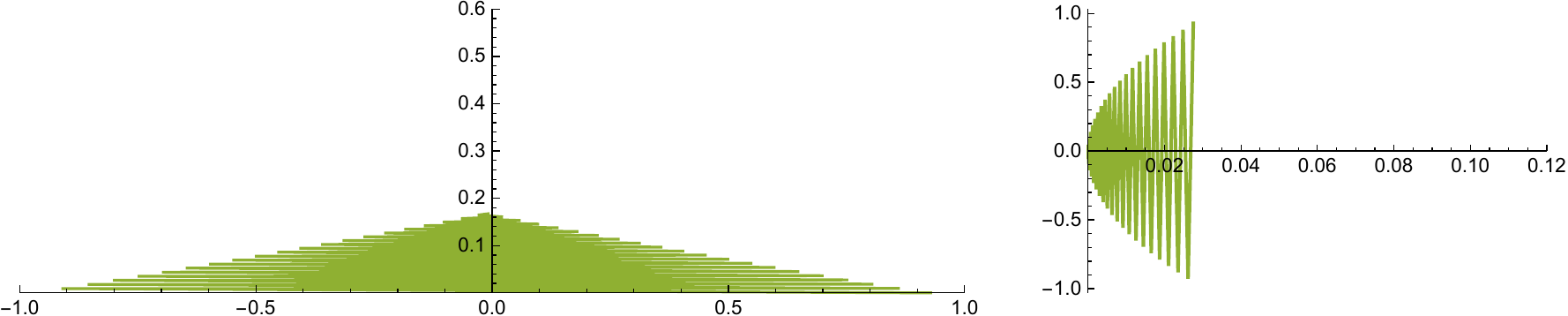}
    \end{center}
	\caption{\small Numerical simulations of curves $\gamma_n$ (left) and the drivers $\xi_n$ (right) which give a counterexample to the converse of Theorem \ref{Thm:DriverToWeldingConvergence}.  Here the weldings $\varphi_n$ of the $\gamma_n$ converge uniformly on $[-1,0]$ to $\varphi(x)=-x$, but the drivers stay far in supremum distance from the zero driver. (Note we have simplified the $\gamma_n$ by drawing them as piecewise polygonal, which would not quite be the case.)}
	\label{Fig:CounterExample}
\end{figure}
\begin{theorem}\label{Thm:WeldingToOthersNotContinuous}
    Let $\varphi(x) =-x$ on $[-1,0]$ be the welding for the vertical line segment $[0,i]$, with corresponding driver $\tb{0}$ and hitting times $\tau_{\tb{0}}$.  There exist $\varphi_n$ corresponding to simple curves $\gamma_n$ with drivers $\xi_n$ and hitting times $\tau_n$ such that $\|\varphi_n - \varphi\|_{\infty[0,1]} \rightarrow 0$ but where $\xi_n \not\rightarrow \tb{0}$ and $\tau_n \not\rightarrow \tau_{\tb{0}}$.
\end{theorem}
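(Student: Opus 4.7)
The plan is to exhibit a sequence of simple curves $\gamma_n$ (as suggested by Figure \ref{Fig:CounterExample}) whose weldings converge uniformly to $\varphi(x)=-x$ but whose drivers stay uniformly bounded away from $\mathbf{0}$. Once we have driver non-convergence, hitting-time non-convergence comes for free from Theorem \ref{Thm:TimesToDriverContinuousZero}(ii), applied contrapositively: if $\tau_n \xrightarrow{u} \tau_{\mathbf{0}}$ on compact subsets, then $\xi_n \xrightarrow{u} \mathbf{0}$, so $\xi_n \not\xrightarrow{u} \mathbf{0}$ forces $\tau_n \not\xrightarrow{u} \tau_{\mathbf{0}}$.

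For the construction I would let $\gamma_n$ be a piecewise-polygonal curve built from $n$ consecutive line segments whose capacities partition $[0,1/4]$ into equal pieces of length $1/(4n)$. The segments alternate between two directions, say angles $\pi/2 - \alpha$ and $\pi/2 + \alpha$ relative to $\mathbb{R}$ for some \emph{fixed} $\alpha \in (0,\pi/2)$ independent of $n$. Using the explicit tilted-segment formula recalled in Example \ref{Eg:NoEndpoints} (where for a line at angle $\alpha \pi$, the driver is $C_\alpha \sqrt{t}$, with $C_\alpha$ nonzero for $\alpha \neq 1/2$), one sees that at each corner the driver $\xi_n$ must swing by an amount of order $\sqrt{1/(4n)}$ in absolute terms from one side, but crucially the cumulative effect of many alternations prevents it from being uniformly close to $\mathbf{0}$: a careful bookkeeping of the signed square-root contributions at odd- vs.\ even-indexed segments produces an oscillation that persists in sup norm, of size at least some $c(\alpha) > 0$ depending only on $\alpha$.

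For the welding convergence, the idea is that fixed-amplitude alternation of equal-capacity tilted segments produces a curve whose ``macroscopic'' shape is essentially vertical (the Hausdorff limit is $[0,i]$), so the welding pairs become symmetric about $0$ as $n \to \infty$. Quantitatively, I would compare $\xi_n$ to a second driver $\eta_n$ obtained by \emph{reflecting} the alternation pattern about its midpoint (swapping odd and even segments). By the $x \mapsto -x$ symmetry, the weldings of $\gamma_n$ and its reflection are reflections of each other; by Theorem \ref{Lemma:InverseTauLip} (the Lipschitz bound for $\tau_{\pm}^{-1}$) applied to each sub-interval $[(j-1)/(4n), j/(4n)]$, the two weldings of $\gamma_n$ and the reflected curve differ by $O(1/\sqrt{n})$ at each dyadic welding pair, and this forces each welding to converge uniformly to its own reflection, i.e., to a welding that satisfies $\varphi(\varphi(x)) = x$ and sends $[-1,0]$ onto $[0,1]$ continuously and monotonically. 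Combined with the symmetry one recovers $\varphi(x) = -x$.

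The main obstacle is the welding-convergence step: one must carefully quantify how the small per-segment asymmetries accumulate and either cancel or shrink with $n$, without appealing to driver convergence (which fails). The inverse-hitting-time Lipschitz bound of Theorem \ref{Lemma:InverseTauLip}, applied locally on each sub-interval together with the shift-symmetry \eqref{Eq:LoewnerShift}, is what makes this plausible: it gives a quantitative handle on how far the endpoints of each welded pair can drift due to driver perturbations on a single segment, and summing these bounds across $n$ segments yields the desired $o(1)$ control. Once the welding convergence is established, the driver non-convergence reduces to a direct computation of $\|\xi_n\|_\infty$ from the explicit piecewise structure, and the hitting-time statement follows from Theorem \ref{Thm:TimesToDriverContinuousZero}(ii) as noted above.
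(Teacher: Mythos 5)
There is a genuine gap, and it is in the central step: your zigzag construction does not produce drivers that stay away from $\tb{0}$. Each of your $n$ segments has capacity $1/(4n)$, so the driver increment over any one segment is $O(n^{-1/2})$; because the tilt alternates, these increments alternate in sign rather than accumulate, and $\|\xi_n\|_{\infty}$ in fact tends to $0$. More conceptually: your curves are uniform quasi-arcs that stay within horizontal distance $O(n^{-1/2})$ of a vertical segment, so in their half-plane-capacity parametrizations they converge uniformly to $[0,i]$; since the Loewner transform $\gamma \mapsto \xi$ is continuous (the result of Kemp/Tran--Yuan cited in the footnote of the introduction), this forces $\xi_n \xrightarrow{u} \tb{0}$, which is exactly what the theorem needs to avoid. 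Your claimed lower bound $c(\alpha)>0$ on the oscillation cannot hold. (There is also an internal inconsistency in your welding step: summing $n$ per-segment errors of size $O(n^{-1/2})$ gives $O(\sqrt{n})$, not $o(1)$, so even that part would need a different argument.)

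The mechanism that actually works requires the curves $\gamma_n$ to \emph{degenerate} rather than converge to $[0,i]$: the paper's proof takes a piecewise-linear $\xi_n$ that welds $-k/n$ to $k/n$ for every $k$, using rapid oscillations near $0$ to weld each pair almost instantly. Then the outer points barely move during the early weldings, so to capture the last pair $(\pm 1)$ the driver must travel distance close to $1$, giving $\sup|\xi_n| > 1 - 3/n$, while all points are welded by time $2/n^2$, so $\tau_n(1) \to 0 \neq 1/4 = \tau_{\tb{0}}(1)$ directly (the curves collapse onto $[-1,1]$; this is what Figure \ref{Fig:CounterExample} depicts). Monotonicity of the weldings plus agreement on the mesh $\{\pm k/n\}$ gives $\varphi_n \xrightarrow{u} \varphi$ with no delicate bookkeeping. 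Your idea of deducing the hitting-time statement contrapositively from Theorem \ref{Thm:TimesToDriverContinuousZero}$(ii)$ is logically admissible once driver non-convergence is in hand, but it is unnecessary (the explicit construction makes $\tau_n \to 0$ pointwise) and, as it stands, it has nothing to apply to, since your construction fails at the driver step.
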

\begin{proof}
We build $\gamma_n$ by a piece-wise linear driving function $\xi_n$ which welds each $k/n$ to $-k/n$, $k=1,\ldots, n$, under its upwards Loewner flow.  The intuition is that $\xi_n$ will capture the initial points extremely fast through rapid oscillations, and hence the latter points will not have time to move very far, affording $\xi_n$ the opportunity to travel far from 0 to capture them.  By construction, the welding $\varphi_n$ generated by $\xi_n$ will satisfy $\varphi_n(-k/n) = k/n$ for all $k \in \{1, \ldots, n\}$, and since the weldings are monotone, this yields $\varphi_n \xrightarrow{u} \varphi$ on $[-1,0]$.  

We first note that we can weld $1/n$ to $-1/n$ in an arbitrarily-small amount of time.   Write $x_1 = -1/n$ and $y_1=1/n$, with $x_1(t)$ and $y_1(t)$ their images after time $t$ in the upwards Loewner flow generated by $\tilde{\xi}_1$, which we now construct.  Indeed, starting from $\tilde{\xi}_1(0)=0$, move $\tilde{\xi}_1$ linearly to $y_1(\epsilon_1)-\epsilon_1$ in time $\epsilon_1$, for some small $\epsilon_1 >0$.  Then $\dot{y}_1(\epsilon_1) = -2/\epsilon_1$, and have $\tilde{\xi}_1$ rush back towards $x_1(t)$ at precisely this same speed, until the time $t_1$ when it is exactly half-way between $x_1(t_1)$ and $y_1(t_1)$.  Then we freeze $\tilde{\xi}_1$ at this point $\tilde{\xi}_1(t_1)$ and let $x_1$ and $y_1$ flow together until they weld at time $T_1$.

We have $\tilde{\xi}_1$ is piecewise linear thus an element of $S_0([0,T_1])$, and we note $T_1$ is small: when moving back towards $x_1$, the distance $\tilde{\xi}_1$ travels is less than $2/n$, and so the time required is less than $\epsilon_1/n$.  Once $\tilde{\xi}_1$ stops, the image of $y_1$ is still $\epsilon_1$ away, and so takes $\epsilon_1^2/4$ units of time to reach $\tilde{\xi}_1(t_1)$.  Thus 
\begin{align*}
    T_1 < \epsilon_1 + \epsilon_1/n + \epsilon_1^2/4.
\end{align*}

After flowing up with such a driver to weld the pair $(x_1,y_1)$, we can repeat the idea to capture subsequent points with $\tilde{\xi}_j$'s, and can thus generate a $\xi_n$ that welds the first $n-1$ pairs in $T_{n-1}<1/n^2$ time.  The last mesh point remaining on the right is the image $y_n(T_{n-1})$ of $y_n(0)=1$, and since for all $0 \leq s \leq T_{n-1}$ we have the very coarse estimate
\begin{align*}
    y_n(s) - \xi(s) \geq y_n(s) - y_{n-1}(s) \geq y_n(0) - y_{n-1}(0) = \frac{1}{n}
\end{align*}
by \eqref{Eq:LoewnerIncrement}, we see $y_n$ has moved towards $\xi_n$ no more than $2n(1/n^2) = 2/n$ units, showing
\begin{align*}
    \sup |\xi_n| > 1 - \frac{3}{n}
\end{align*}
when $\xi_n$ moves fast enough after $T_{n-1}$ towards $y_n$.  

We conclude $\varphi_n \xrightarrow{u} \varphi$ but $\xi_n \not\rightarrow \tb{0}$.  Furthermore, we may build $\xi_n$ to weld all the points in time $2/n^2$, showing the hitting times also do not converge to $\tau_{\tb{0}}$. 
\end{proof}
Figure \ref{Fig:CounterExample} gives a numerical approximation for the curves $\gamma_n$ generated by the $\xi_n$, which collapse to the real interval $[-1,1]$ as $n \rightarrow \infty$.

\subsection{An application: convergence of a zipper-like algorithm for welding using minimal-energy curves}\label{Sec:Zipper}

Let $\gamma([0,T])$ be a finite simple curve in $\mathbb{H}\cup\{0\}$ with associated upwards driver $\xi$ and conformal welding $\varphi:[-a,0] \rightarrow [0,b]$.  Recall that $\gamma$ has finite \emph{Loewner energy} if the Dirichlet energy of $\xi$ is finite, i.e. $\xi$ is absolutely continuous and
\begin{align}\label{Eq:LoewnerEnergy}
    I_L(\xi) := \frac{1}{2}\int_0^T \dot{\xi}(t)^2dt < \infty.
\end{align}
The Loewner energy was introduced in \cite{ShekharFriz} and subsequently saw rapid development in \cite{RohdeWang,YilinFredwelding,YilinFredKufarev,WangReverse,WangEquiv}, to give an incomplete list.  In short, it has fascinating connections to a diverse array of fields: probability theory, complex analysis, hyperbolic geometry and geometric measure theory \cite{Bishop}, and even \Teich theory.  See \cite{Yilinsurvey} for a helpful overview.  

We use Loewner energy minimizers in this section to address a conformal welding approximation question.  Given a partition $\mathcal{P} = \{(x_j,y_j)\}_{j=1}^N$ of $[-a,b]$,
\begin{align}\label{Ineq:Partition}
    -a = x_N < x_{N-1} < \cdots < x_1 < x_0=0=y_0 < y_1 < \cdots < y_N = b
\end{align}
with $\varphi(x_j) = y_j$ for each $j$, an interesting question is when a curve $\tilde{\gamma}$ which welds each pair $(x_j,y_j)$ together is close to $\gamma$.  We have seen in \S\ref{Sec:WeldingCounterexample} that this is not always the case.  Indeed, given $\varphi \mapsto \xi$ and $\xi \mapsto \gamma$ are both not continuous, we expect $\varphi \mapsto \gamma$ to exhibit a number of pathologies.  

This question of closeness of $\gamma$ given closeness of $\varphi$ is related to the \emph{(domain) zipper algorithm} of Don Marshall \cite{Marshallrohde}, which seeks to approximate a conformal map $f$ to a domain $\Omega$ via a map $f_n$ which maps to a domain $\Omega_n$ whose boundary agrees with  $\partial \Omega$ on a given mesh/discretization $\mathcal{Q} \subset \partial \Omega$.\footnote{Thus note all ``zippers'' in this section are distinct from Sheffield's quantum zipper in probability theory.}  The map $f_n$ is built from composing $\#\mathcal{Q}$ conformal maps, where each subsequent map, in effect, draws a boundary segment between the next two points in $\mathcal{Q}$.  The question here is, when $\mathcal{Q}$ is very fine (and one ``draws'' a reasonable arc between successive points), is $\partial\Omega_n$ actually uniformly close to $\partial\Omega$?  There is a similar algorithm for weldings, Marshall's \emph{welding zipper algorithm}, which seeks to reconstruct $\gamma$ through composing $N$ conformal maps which ``zip up'' the partition \eqref{Ineq:Partition} discretizing $\varphi$ one pair at a time, producing a curve $\gamma_n$ whose welding $\varphi_n$ agrees with $\varphi$ at each $x_j$.  The question for the welding zipper is: what conformal maps can you use for each zip to guarantee that $\gamma_n$ is close to $\gamma$?

Both versions of the zipper, it turns out, work remarkably well in practice and have become something of industry standards for numerically computing conformal maps.  Proving convergence, however, has been elusive. For the domain zipper, the only proof is for when one draws hyperbolic geodesic segments between subsequent boundary points \cite{Marshallrohde}, and convergence for the welding zipper remains open (though see \cite{Mesikepp} for a partial result and further discussion). 

In this subsection, we give as a corollary of Theorem \ref{Thm:DriverToWeldingConvergence} a positive convergence result to an algorithm \emph{similar} to the welding zipper.  We construct curves $\gamma_n$ whose weldings match $\varphi$ on $\mathcal{P}_n$, but we create each $\gamma_n$ ``all at once'' through minimizing Loewner energy among all such curves, instead of building it through $N$ compositions.\footnote{We do not propose concrete means to actually compute the minimizers $\gamma_n$, and so we are admittedly using the term ``algorithm'' rather loosely.  Our point is to allude to the welding zipper algorithm, our source of inspiration.}  That is, whereas the zipper welds the first two points $x_1,y_1$ with some map $F_1$, and then welds the images $F_1(x_2),F_1(y_2)$ of the next two points under $F_1$ with some $F_2$, and so on, solving the welding problem with $F_N\circ \cdots \circ F_1$, we start with curves which already weld all pairs in $\mathcal{P}$ and minimize energy among them.  This makes the problem more tractable, and we only need existing results once Theorem \ref{Cor:TimesPointwise} establishes existence of minimizers.

Let us write $|\mathcal{P}| := \max\{\, x_{j-1}-x_j, y_j-y_{j-1} \,\}_{j=1}^N$ for the norm of the partition and say $\varphi$ \emph{welds} $\mathcal{P}$ if $\varphi(x_j) = y_j$ for all $1 \leq j \leq N$.

\begin{theorem}\label{Cor:Zipper}
    Let $\gamma:[0,T] \rightarrow \mathbb{H} \cup \{x\}$ be a finite curve of finite Loewner energy, with upwards driver $\xi \in S_0([0,T])$ and welding $\varphi:[-a,0] \rightarrow [0,b]$.  
    \begin{enumerate}[$(i)$]
        \item\label{Thm:MinimizersExist} For any partition $\mathcal{P}$ of $[-a,b]$ as in \eqref{Ineq:Partition}, there exists a curve $\gamma_{\mathcal{P}}$ with driver $\xi_{\mathcal{P}} \in S_0$ which minimizes the Loewner energy among all curves welding $\mathcal{P}$.  
        \item\label{Thm:MinimizersConverge} If $\{\mathcal{P}_n\}$ is any sequence partitions with $|\mathcal{P}_n| \rightarrow 0$, and $\{\gamma_n\}$ a sequence of corresponding  Loewner-energy minimizers from $(i)$, then in the half-plane capacity parametrizations of the curves, 
    \begin{align}\label{Lim:CurvesConverge}
        \lim_{n \rightarrow \infty} \| \gamma_n - \gamma\|_{\infty[0,T']} =0
    \end{align}
    for any $[0,T'] \subset [0,T)$.  Furthermore, the Loewner energies of the entire curves satisfy
    \begin{align}\label{Lim:EnergiesIncrease}
        \lim_{n \rightarrow \infty} I_L(\gamma_n) = I_L(\gamma).
    \end{align}
    If the partitions are nested, $\mathcal{P}_{n} \subset \mathcal{P}_{n+1}$ for all $n$, this limit is non-decreasing.
    \end{enumerate}
\end{theorem}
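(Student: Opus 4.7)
Fix $\mathcal{P}$ and let $\mathcal{F}_{\mathcal{P}}$ denote the set of drivers $\tilde\xi \in S_0$ whose curve welds each $(x_j,y_j) \in \mathcal{P}$, restricted further to drivers that stop at the welding time of the outermost pair $(-a,b)$ (prolonging a driver beyond this time strictly increases energy). This set is non-empty: a zipper-style construction as in \S\ref{Sec:PositiveEvidence} produces such a driver. I would apply the direct method of the calculus of variations. Set $I^* := \inf_{\tilde\xi \in \mathcal{F}_{\mathcal{P}}} I_L(\tilde\xi)$ and take a minimizing sequence $\tilde\xi^{(k)}$. By \eqref{Ineq:MaxTime} each terminal time $T^{(k)}$ is at most $T_0 := (a+b)^2/16$, so extending each $\tilde\xi^{(k)}$ constantly to $[0,T_0]$ (which preserves the Dirichlet energy) produces a sequence on a common interval. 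Cauchy--Schwarz turns the energy bound into a uniform Hölder-$1/2$ bound, so Arzela--Ascoli furnishes a subsequence converging uniformly to some $\xi_{\mathcal{P}} \in C_0([0,T_0])$. Weak lower semi-continuity of the Dirichlet integral yields $I_L(\xi_{\mathcal{P}}) \le I^*$, and finite Loewner energy forces $\xi_{\mathcal{P}} \in S_0$ by \cite{ShekharFriz}. Lemma \ref{Lemma:DriverToTimesJoint} then tells us that the hitting times of $\xi_{\mathcal{P}}$ at each $x_j, y_j$ are the limits of those of $\tilde\xi^{(k)}$, so $\xi_{\mathcal{P}}$ still welds $\mathcal{P}$; hence $\xi_{\mathcal{P}} \in \mathcal{F}_{\mathcal{P}}$ and $I_L(\xi_{\mathcal{P}}) = I^*$.

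\textbf{Part (ii).} Because $\xi \in \mathcal{F}_{\mathcal{P}_n}$ for every $n$, we get the uniform bound $I_L(\xi_n) \le I_L(\xi)$. Repeating the compactness argument of (i) on the common interval $[0,T_0]$, every subsequence of $\{\xi_n\}$ admits a further subsequence $\xi_{n_k}$ converging uniformly to some $\xi_\infty \in S_0$ with $I_L(\xi_\infty) \le I_L(\xi)$. By Theorem \ref{Thm:DriverToWeldingConvergence} (and Lemma \ref{Lemma:DriverToWeldingJoint}), the welding $\varphi_\infty$ of $\xi_\infty$ agrees with $\varphi$ on $\bigcup_n \mathcal{P}_n$, a dense subset of $[-a,b]$ since $|\mathcal{P}_n| \to 0$; continuity of both weldings upgrades this to $\varphi_\infty \equiv \varphi$ on $[-a,b]$. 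The uniqueness of the finite-Loewner-energy curve realising a prescribed welding (a consequence of the quasisymmetry of finite-energy weldings \cite{YilinFredwelding,WangEquiv}) then gives $\xi_\infty = \xi$. Since every subsequence admits a further subsequence converging to $\xi$, the full sequence satisfies $\xi_n \xrightarrow{u} \xi$, and the half-plane capacity convergence \eqref{Lim:CurvesConverge} now follows from continuity of $\xi \mapsto \gamma$ within the finite-energy class \cite[Thm.~2$(v),(vi)$]{ShekharFriz}.

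\textbf{Remaining pieces and main obstacle.} The energy limit \eqref{Lim:EnergiesIncrease} is a sandwich: $\limsup I_L(\gamma_n) \le I_L(\gamma)$ from the feasibility of $\gamma$ for each $\mathcal{P}_n$, and $\liminf I_L(\gamma_n) \ge I_L(\gamma)$ from weak lower semi-continuity of the Dirichlet integral along $\xi_n \to \xi$. The monotonicity under nested partitions is immediate: $\mathcal{P}_n \subset \mathcal{P}_{n+1}$ shrinks the feasible set, $\mathcal{F}_{\mathcal{P}_{n+1}} \subset \mathcal{F}_{\mathcal{P}_n}$, and forces $I_L(\gamma_n) \le I_L(\gamma_{n+1})$. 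The hard part, I expect, is the identification $\xi_\infty = \xi$ in part (ii): matching weldings on a dense set is routine once Theorem \ref{Thm:DriverToWeldingConvergence} is in hand, but pinning $\xi_\infty$ down as \emph{the} original $\xi$ needs the non-trivial uniqueness of finite-energy curves realising a prescribed welding, a result external to this paper. A secondary technical nuisance is that the constant extension of drivers past their terminal times perturbs welding intervals; this is mild, but some care is needed to confirm that $\xi_\infty = \xi$ holds on all of $[0,T]$ and not merely on each $[0,T']$ with $T' < T$.
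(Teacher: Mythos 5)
Your overall architecture matches the paper's: part $(i)$ by the direct method (non-empty feasible set, energy bound $\Rightarrow$ H\"{o}lder-$1/2$ equicontinuity, Arzela--Ascoli, lower semicontinuity, and the paper's own driver-to-welding continuity to see the limit still welds $\mathcal{P}$), and part $(ii)$ by the bound $I_L(\xi_n)\leq I_L(\xi)$ from feasibility of $\xi$, compactness, identification of every subsequential limit with $\xi$ via welding agreement plus injectivity of the welding-to-curve map for quasiarcs, and the same energy sandwich and nestedness remark. Where you genuinely diverge is the last step of $(ii)$: you obtain \eqref{Lim:CurvesConverge} by citing continuity of $\xi\mapsto\gamma$ within the finite-energy class \cite{ShekharFriz}, whereas the paper gives a self-contained argument (rescaling $\tilde{\gamma}_n=\sqrt{T/T_n}\,\gamma_n$, quasiconformal normal families via \cite{WangReverse}, the $\hcap$-continuity of Corollary \ref{Cor:hcapContinuousCurves}, and Proposition \ref{Prop:LMR} to identify the limit curve). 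Your shortcut is legitimate and much shorter, but to apply it you must handle the time-horizon mismatch $T_n\neq T$ explicitly: pass to the downward drivers $\lambda_n(t)=\xi_n(T_n-t)-\xi_n(T_n)$ restricted to $[0,T']$, $T'<T$, and use $T_n\to T$ together with the common modulus of continuity from the energy bound to get $\lambda_n\xrightarrow{u}\lambda$ there before invoking the cited continuity theorem; this is exactly the bookkeeping the paper's rescaling performs by hand. The trade-off is self-containedness (the paper) versus brevity by leaning on an external result (you); note the paper also leans on external inputs (quasiarc welding rigidity, quasiconformal compactness), so your reliance on welding uniqueness is of the same nature as theirs.

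Two smaller points need repair. First, your appeal to Lemma \ref{Lemma:DriverToTimesJoint} (and to Theorem \ref{Thm:DriverToWeldingConvergence} in $(ii)$) at the outermost pair $(x_N,y_N)=(-a,b)$ is not directly valid: all of the hitting-time and welding convergence statements hold only on compact subsets of the \emph{interior} of the welded interval, and Example \ref{Eg:NoEndpoints} shows the endpoints can genuinely misbehave. The paper's fix is to extend the drivers constantly a bit further in time, so that $[-a,b]$ (respectively $[-a-\epsilon,b+\epsilon]$) lies strictly inside the interval welded by every driver in play; alternatively one can reach the endpoints through Theorem \ref{Lemma:InverseTauLip}, since $\tau_{\pm}^{-1}(T_U;\cdot)$ is $1$-Lipschitz in the driver. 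You flag this as a ``mild nuisance'' but it is precisely the step that needs the extension trick, so it should be carried out. Second, the claim that ``prolonging a driver beyond this time strictly increases energy'' is false --- the constant prolongation you use two sentences later adds no energy --- but this is harmless, since truncating at the outermost welding time never increases energy and that is all your normalization of the feasible set requires.
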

\noindent We note that the reason we must restrict to $[0,T'] \subset [0,T]$ in \eqref{Lim:CurvesConverge} is that, \emph{a priori}, hcap$(\gamma_n)$ could be less than hcap$(\gamma)$ for all $n$.\footnote{Recall the similar technicality due to the changing domains of the $\tau_n$ discussed in Example \ref{Eg:NoEndpoints}.}  The proof will show that if we rescale all the $\gamma_n$'s to have the ``correct'' time $T$ via setting $\tilde{\gamma} := \sqrt{\frac{T}{T_n}}\gamma_n$, then
\begin{align}\label{Lim:CurvesConverge2}
    \|\tilde{\gamma}_n - \gamma\|_{\infty[0,T]} \rightarrow 0
\end{align}
in the half-plane-capacity parametrizations.  In fact, our strategy to prove \eqref{Lim:CurvesConverge} will be to first show \eqref{Lim:CurvesConverge2}.

Note also that we normalize so that $\xi(0)=0$ (corresponding to the conformal welding exchanging intervals on either side of the origin).  Considering $\gamma$ as generated by $\xi$ on $[0,T]$, we thus have $\gamma(0) = x=\xi(T)$, which is not necessarily zero.  

We precede the proof by collecting several known results that we will use.  
\begin{proposition}[Lemma 4.2 \cite{LMR}]\label{Prop:LMR}
    Let $\gamma_n, \tilde{\gamma}:[0,T] \rightarrow \mathbb{H} \cup \{0\}$ be simple curves parametrized by half-plane capacity, with $\lambda_n \in S_0([0,T])$ the downwards driving functions for $\gamma_n$.  If $\|\gamma_n - \tilde{\gamma}\|_{\infty[0,T]} \rightarrow 0$ and there exists $\lambda \in C_0([0,T])$ such that  $\|\lambda_n - \lambda\|_{\infty[0,T]} \rightarrow 0$, then $\tilde{\gamma} = \gamma^\lambda$.  That is, $\tilde{\gamma}$ is the curve generated by $\lambda$.
\end{proposition}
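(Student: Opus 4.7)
The plan is to pass to the limit in Loewner's ODE for the downwards maps $g_t^n$ associated with $\gamma_n$, and then identify the limit in two ways: as the solution to Loewner's equation with driver $\lambda$, and as the normalized conformal map from $\mathbb{H} \setminus \tilde\gamma([0,t])$ onto $\mathbb{H}$. Since the Loewner hull is determined by its normalized uniformizing map, this will force $\tilde\gamma([0,t]) = \gamma^\lambda([0,t])$ for every $t \in [0,T]$, and in particular (tracking the tip) $\tilde\gamma(t) = (g_t^\lambda)^{-1}(\lambda(t)) = \gamma^\lambda(t)$.

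Step 1 (set up the ODE away from the curves). Fix a compact set $K \subset \mathbb{H}\setminus \tilde\gamma([0,T])$. Uniform convergence $\gamma_n \xrightarrow{u} \tilde\gamma$ gives Hausdorff convergence of images, so for $n$ large, $\mathrm{dist}(K, \gamma_n([0,T]))$ is uniformly bounded below, and hence $g_t^n$ is well defined on $K$ for all $t \in [0,T]$. I would use the half-plane capacity normalization $g_t^n(z) = z + 2t/z + O(1/z^2)$ at $\infty$, together with the fact that $\mathrm{hcap}(\gamma_n([0,t])) = 2t$ stays bounded on $[0,T]$, to obtain via standard hull estimates (e.g.\ Prop.~3.46 in Lawler) a uniform $n$-independent modulus for the family $\{g_t^n\}$ on $K$, and in particular a uniform lower bound $|g_t^n(z) - \lambda_n(t)| \geq c(K) > 0$ for $z \in K$, $t \in [0,T]$.

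Step 2 (extract a limit and match with $\lambda$). With the above bounds, the integral form of \eqref{Eq:Loewner} shows the family $t \mapsto g_t^n(z)$ is uniformly Lipschitz in $t$, and Montel/Arzel\`a--Ascoli provides a subsequential limit $\tilde g_t(z)$, holomorphic in $z$ on $\mathbb{H}\setminus \tilde\gamma([0,T])$ and locally Lipschitz in $t$. Because $\lambda_n \xrightarrow{u} \lambda$ and $|g_t^n(z) - \lambda_n(t)|$ is bounded below uniformly, the integrand $2/(g_t^n(z) - \lambda_n(t))$ converges uniformly to $2/(\tilde g_t(z) - \lambda(t))$, so passing to the limit in the Volterra equation yields
\[
\tilde g_t(z) = z + \int_0^t \frac{2\,ds}{\tilde g_s(z) - \lambda(s)}.
\]
That is, $\tilde g_t = g_t^\lambda$, the downward flow driven by $\lambda$. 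Uniqueness of this solution also shows every subsequential limit coincides, so in fact $g_t^n \to g_t^\lambda$ locally uniformly on $(\mathbb{H}\setminus \tilde\gamma([0,T])) \times [0,T]$.

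Step 3 (identify the limit via Carath\'eodory kernel convergence and conclude). The normalized maps $(g_t^n)^{-1}: \mathbb{H} \to \mathbb{H}\setminus \gamma_n([0,t])$ fix $\infty$ with derivative $1$ there. Uniform convergence $\gamma_n \xrightarrow{u} \tilde\gamma$ on $[0,t]$ gives kernel convergence of $\mathbb{H}\setminus \gamma_n([0,t])$ to $\mathbb{H}\setminus \tilde\gamma([0,t])$ with respect to the base point $\infty$ (the latter domain is simply connected since $\tilde\gamma$ is the uniform limit of simple curves rooted on $\mathbb{R}$, and one verifies it contains no interior hulls). By the Carath\'eodory kernel theorem in the version for hydrodynamically normalized maps, $(g_t^n)^{-1}$ converges locally uniformly to the normalized conformal map from $\mathbb{H}$ onto $\mathbb{H}\setminus \tilde\gamma([0,t])$, and inverting, $g_t^\lambda$ is the hydrodynamically normalized uniformizer of $\mathbb{H}\setminus \tilde\gamma([0,t])$. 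Uniqueness of such a map identifies the Loewner hull $K_t^\lambda$ with $\tilde\gamma([0,t])$, and tracking the pre-image of $\lambda(t)$ recovers $\tilde\gamma(t) = \gamma^\lambda(t)$.

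The main obstacle is Step 1: extracting the uniform lower bound on $|g_t^n(z) - \lambda_n(t)|$ for $z$ bounded away from $\tilde\gamma$. This is not automatic from the ODE alone (the imaginary part can decay along the downward flow), and the cleanest argument uses the geometric/conformal interpretation of $g_t^n$ together with hcap or distortion estimates rather than a direct ODE computation. Once this bound is in place, Steps 2 and 3 are standard continuous-dependence and normal-family arguments.
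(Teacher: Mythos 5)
The paper does not prove this statement; it is imported verbatim as Lemma~4.2 of \cite{LMR}, so there is no in-paper argument to compare against. Your overall architecture --- identify the hulls generated by $\lambda$ with the sets $\tilde\gamma([0,t])$ by playing two modes of convergence against each other and invoking uniqueness in the \Cara kernel theorem --- is the standard (and correct) route. Your Step~3 is essentially the whole proof: uniform convergence $\gamma_n \xrightarrow{u} \tilde\gamma$ gives kernel convergence of $\mathbb{H}\setminus\gamma_n([0,t])$ to $\mathbb{H}\setminus\tilde\gamma([0,t])$ (the latter is simply connected because $\tilde\gamma$ is \emph{assumed} simple, so no ``interior hulls'' need to be ruled out), while $\lambda_n \xrightarrow{u} \lambda$ gives locally uniform convergence of $(g_t^n)^{-1}$ to $(g_t^\lambda)^{-1}$ by the standard continuity of the driver-to-hull map in the \Cara topology (\cite[Prop.~4.47]{Lawler}, already cited in this paper's footnotes). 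Uniqueness of the hydrodynamically normalized map onto the kernel then forces $K_t^\lambda=\tilde\gamma([0,t])$ for every $t$, and $\gamma^\lambda(t)=\bigcap_{s>t}\overline{K_s^\lambda\setminus K_t^\lambda}=\tilde\gamma(t)$ by continuity and injectivity of $\tilde\gamma$. Your Steps~1--2 merely re-derive \cite[Prop.~4.47]{Lawler} by hand, and this is where the gap sits.

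The gap is the uniform lower bound $|g_t^n(z)-\lambda_n(t)|\geq c(K)>0$. It is true, but it does not follow from the estimate you cite (the hull estimates in \cite[\S3.4]{Lawler} bound $|g_K(z)-z|$ from \emph{above}; they give no lower bound on $\operatorname{Im}g_t^n(z)$), and you concede you do not have an argument. Note also that the na\"ive Koebe route is circular: the distortion theorem gives $\operatorname{Im}g_t^n(z)\asymp d\bigl(z,\partial(\mathbb{H}\setminus K_t^n)\bigr)/|(f_t^n)'(g_t^n(z))|$, and bounding $|(f_t^n)'|$ from above near the boundary is exactly as hard as the original claim. A non-circular proof: observe that $\operatorname{Im}g_t^n(z)=G_{\mathbb{H}\setminus K_t^n}(z,\infty)$, the Green's function with pole at $\infty$ normalized to be $\sim\operatorname{Im}z$ there. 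By domain monotonicity of the Green's function, at a reference point $z_0$ outside a half-disk $\overline{B_R(0)}\cap\mathbb{H}$ containing all the hulls one has $G_{\mathbb{H}\setminus K_t^n}(z_0,\infty)\geq \operatorname{Im}(z_0)\bigl(1-R^2/|z_0|^2\bigr)>0$ uniformly in $n,t$; then Harnack's inequality on a compact connected set joining $K$ to $z_0$ inside the fixed domain $\mathbb{H}\setminus\tilde\gamma([0,T])^{\delta/2}$ (which contains all $\mathbb{H}\setminus K_t^n$ for large $n$) transports this lower bound to all of $K$, uniformly in $n$ and $t$. Since $|g_t^n(z)-\lambda_n(t)|\geq\operatorname{Im}g_t^n(z)$, this closes Step~1; alternatively, delete Steps~1--2 and cite \cite[Prop.~4.47]{Lawler} directly, which is almost certainly what \cite{LMR} do.
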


\begin{proposition}[Prop. 2.1$(iii)$ \cite{ShekharFriz}]\label{Prop:Holder1/2}
    If $I_L(\gamma) \leq M$, the half-plane capacity parametrization $\gamma$ is H\"{o}lder-$1/2$ with \Hol semi-norm $|\gamma|_{1/2} \leq Ce^{CM}$ for some $C>0$.
\end{proposition}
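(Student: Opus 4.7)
The plan is to leverage Cauchy–Schwarz, the small-Hölder-norm regularity theory for Loewner chains, and a conformal chaining argument across $\sim M$ subintervals to upgrade finite energy to Hölder-$1/2$ regularity with exponential dependence on $M$. First, for any $[s,t] \subset [0,T]$, Cauchy–Schwarz gives
\begin{align*}
|\xi(t)-\xi(s)|^2 \le (t-s)\int_s^t \dot\xi(u)^2\,du \le 2(t-s)\,I_L(\xi|_{[s,t]}),
\end{align*}
so the local Hölder-$1/2$ seminorm of $\xi$ on $[s,t]$ is bounded by $\sqrt{2\,I_L(\xi|_{[s,t]})}$. Choose a partition $0 = s_0 < s_1 < \cdots < s_N = T$ with $N \asymp M$ such that $I_L(\xi|_{[s_k,s_{k+1}]}) \le c_0$ on each block for a small universal constant $c_0$; then $|\xi|_{1/2,[s_k,s_{k+1}]} < 4$ on every block.

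On each block, the shifted driver $\tilde\xi_k(u) := \xi(s_k+u)-\xi(s_k)$ starts at $0$ with Hölder-$1/2$ seminorm strictly below $4$, so by the Marshall–Rohde–Lind small-norm theory $\tilde\xi_k$ generates a simple curve $\eta_k$ whose half-plane-capacity parametrization satisfies a uniform bound $|\eta_k|_{1/2} \le C_0$, with $C_0$ independent of $k$ and $M$. By conformal covariance of the Loewner flow, the segment $\gamma|_{[s_k,s_{k+1}]}$ equals $f_{s_k}(\eta_k + \lambda(s_k))$, where $f_{s_k} := g_{s_k}^{-1}$ is the inverse downward Loewner map at time $s_k$ and $\lambda$ is the downward driver (whose energy equals $I_L(\xi)=M$). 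Hence $|\gamma|_{1/2,[s_k,s_{k+1}]} \le L_k \cdot C_0$, where $L_k$ is the Lipschitz constant of $f_{s_k}$ along the path $\eta_k + \lambda(s_k)$ in $\overline{\mathbb{H}}$.

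The remaining step, which I expect to be the main obstacle, is bounding each $L_k$ by $Ce^{Ck}$ uniformly, so that $L_k \le Ce^{CM}$. Using the forward Loewner ODE $\partial_t g_t'(z) = -2 g_t'(z)/(g_t(z)-\lambda(t))^2$, one chooses evaluation points along $\eta_k + \lambda(s_k)$ whose imaginary parts are controlled from below by the $C_0$-Hölder bound on $\eta_k$, and integrates the ODE backward in time to show that $\log|g_{s_j}'(z)^{-1}|$ grows at most linearly in $j \le k$. Once $L_k \le Ce^{CM}$ is in hand, a telescoping triangle inequality splitting any pair $s<t$ at the intervening partition points and combining the per-block Hölder estimates delivers the global bound $|\gamma|_{1/2,[0,T]} \le Ce^{CM}$.

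The delicate technical point in the final step is that $f_{s_k}$ is singular at $\lambda(s_k)$ (the base of $\gamma([0,s_k])$), so the evaluation points must be uniformly separated from $\lambda(s_k)$ in a way geometrically controlled by $C_0$, and the accumulated multiplicative distortion across the $\asymp M$ blocks must be verified to stay exponential rather than growing super-exponentially; this is precisely where one must exploit the (parametrization-invariant) energy bound rather than just the local Hölder norms.
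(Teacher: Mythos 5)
This proposition is quoted in the paper from \cite{ShekharFriz}; the paper gives no proof of its own, only the remark that a qualitative version (without the explicit constant $Ce^{CM}$) follows from the proof of \cite[Lemma 4.1]{LMR} together with the fact that finite-energy drivers have locally small H\"{o}lder-$1/2$ norm. Your reduction is exactly in that spirit: Cauchy--Schwarz to get per-block H\"{o}lder norm below $4$, the small-norm trace theory on each block, and concatenation via $\gamma|_{[s_k,s_{k+1}]} = f_{s_k}(\eta_k+\lambda(s_k))$. All of that is sound, and the final telescoping (which costs only an extra factor $\sqrt{N}\asymp\sqrt{M}$, absorbed into $e^{CM}$) is fine.

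The genuine gap is the step you yourself flag: the bound $L_k \le Ce^{CM}$ on the distortion of $f_{s_k}$ along the incoming arc is asserted with a heuristic that does not close as written. Two concrete problems. First, the Koebe-type estimate $|f_{s_k}'(w)| \asymp \operatorname{dist}\bigl(f_{s_k}(w), \partial(\mathbb{H}\setminus\gamma[0,s_k])\bigr)/\operatorname{Im}(w)$ makes the numerator precisely the quantity you are trying to control (the distance of $\gamma(s_k+u)$ from the tip $\gamma(s_k)$), so the argument is circular unless you set up a genuine bootstrap. Second, the claim that $\log|g_{s_j}'(z)^{-1}|$ accrues only $O(1)$ per block is false for a fixed evaluation point near $\mathbb{R}$: the per-block derivative distortion of the Loewner flow at a point whose height is much smaller than the square root of the block's \emph{time} length is unbounded (the blocks are cut by energy quantiles, so their time lengths are uncontrolled), and a point at the correct scale for block $k$ is at the wrong scale for every earlier block. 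Making this work requires a careful multi-scale choice of evaluation points, and nothing in the sketch guarantees the accumulated distortion is $e^{O(M)}$ rather than worse. The actual proof in \cite{ShekharFriz} avoids the block decomposition entirely: it bounds $|\hat f_t'(iy)|$ uniformly in $y$ and $t$ by integrating the Loewner ODE for $\log f_t'$ and estimating the resulting integral against the Dirichlet energy via Cauchy--Schwarz, which is where the $e^{CM}$ really comes from. As it stands, your argument establishes the qualitative H\"{o}lder-$1/2$ statement modulo the (known, citable) local theory, but not the claimed bound $|\gamma|_{1/2}\le Ce^{CM}$.
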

\noindent Note that this result also follows, although without the explicit bound on $|\gamma|_{1/2}$, from \cite[proof of Lemma 4.1]{LMR} and the fact that finite-energy curves have locally-small \Hol norm.

We also need a continuity property of half-plane capacity.  For a set $F \subset \mathbb{H}$, the \emph{(closed) $\epsilon$-neighborhood of $F$ in $\mathbb{H}$} is
\begin{align}\label{Def:EpsilonNeighborhood}
    F^\epsilon := \mathbb{H} \cap \bigcup_{z \in F} \overline{B_\epsilon(z)},
\end{align}
and if $F$ is bounded and relatively closed in $\mathbb{H}$, $\fil(F)$ is the complement of the unbounded connected component of $F^c$.
\begin{proposition}[Lemma 4.4 \cite{Kemp}]\label{Prop:hcapUniform1}
    There are positive constants $\alpha$ and $C=C(R)$ such that if a compact $\mathbb{H}$-hull $K$ satisfies $K \subset \fil(K^\epsilon) \subset B_R(x)$ for some $x \in \mathbb{R}$, then
    \begin{align}\label{Ineq:hcapHolder}
        \hcap(K^\epsilon) - \hcap(K) \leq C\epsilon^\alpha.
    \end{align}
\end{proposition}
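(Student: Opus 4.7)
The plan is to use the probabilistic representation of half-plane capacity from \eqref{Def:hCapProbability} together with a Brownian coupling and a harmonic-measure estimate. First, by monotonicity of $\hcap$ and $K \subset K^\epsilon \subset \tilde K := \fil(K^\epsilon)$, it suffices to bound $\hcap(\tilde K) - \hcap(K) \leq C\epsilon^\alpha$; after translation we may take $x = 0$, so that $\tilde K$ is a compact $\mathbb{H}$-hull in $B_R(0)$.

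Second, I would couple the two hitting problems via the same Brownian motion. Let $B$ be BM from $iy$ with exit times $\tau_K, \tau_{\tilde K}$ of $\mathbb{H}\setminus K$ and $\mathbb{H}\setminus\tilde K$ respectively. On the events $\{B_{\tau_{\tilde K}} \in \mathbb{R}\}$ and $\{B_{\tau_{\tilde K}} \in \partial K \cap \partial \tilde K\}$ both hulls contribute the same expected imaginary part to \eqref{Def:hCapProbability}. On the remaining event $\{B_{\tau_{\tilde K}} \in \partial \tilde K \setminus K\}$, the strong Markov property combined with the standard identity $\mathbb{E}^z[\text{Im}(B_{\tau_K})] = \text{Im}(z) - \text{Im}(g_K(z))$ for $z \in \mathbb{H}\setminus K$ (the imaginary part of the identity $g_K(z) = z - \hcap(K)/z+\cdots$ read off boundary data) yields
\begin{align*}
    \hcap(\tilde K) - \hcap(K) = \lim_{y \to \infty} y\, \mathbb{E}^{iy}\!\left[\text{Im}(g_K(B_{\tau_{\tilde K}}))\,\mathbf{1}_{B_{\tau_{\tilde K}} \in \partial \tilde K \setminus K}\right].
\end{align*}

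Third, I would bound both factors uniformly in the hull. The containment $\tilde K \subset B_R(0) \cap \mathbb{H}$, together with comparison against the explicit half-disk conformal map $z \mapsto z + R^2/z$, gives $y \cdot \mathbb{P}^{iy}[B_{\tau_{\tilde K}} \in \partial \tilde K \cap \mathbb{H}] \leq C_1(R)$ for all sufficiently large $y$. For the integrand, every $w \in \partial \tilde K \setminus K$ lies within $\epsilon$ of $K$ (since $\tilde K \setminus K \subset K^\epsilon \setminus K$), and the harmonic function $v(z) := \text{Im}(g_K(z))$ on $\mathbb{H}\setminus K$ vanishes continuously on $\partial K \cap \mathbb{H}$ and on $\mathbb{R}\setminus K$ while being bounded by $O(R)$ on $B_{2R}(0) \cap (\mathbb{H}\setminus K)$. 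Beurling's projection theorem then gives $v(w) \leq C_2(R)\epsilon^{1/2}$, so $\alpha = 1/2$ works and $C = C_1(R)C_2(R)$.

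The main obstacle is the last step, the uniform H\"{o}lder boundary estimate for $\text{Im}(g_K)$: the hull $K$ can be arbitrarily irregular (even fractal), so one cannot invoke any smoothness of $\partial K$, and the $\epsilon^{1/2}$ decay must come from a universal harmonic-measure principle. Beurling's projection theorem is the natural tool, since it depends only on $\text{dist}(w,K)$ and the enclosing radius $R$, not on the shape of $K$.
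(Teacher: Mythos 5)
The paper does not prove this proposition --- it is quoted verbatim (in slightly weakened form) from Kemp's lecture notes --- so there is no in-paper argument to compare against. Your proof is correct and is essentially the standard argument one would expect behind the citation: write $\hcap$ probabilistically via \eqref{Def:hCapProbability}, apply the strong Markov property at $\tau_{\tilde K}$ together with $\mathbb{E}^z[\mathrm{Im}(B_{\tau_K})]=\mathrm{Im}(z)-\mathrm{Im}(g_K(z))$ to isolate the error term $\lim_y y\,\mathbb{E}^{iy}[\mathrm{Im}(g_K(B_{\tau_{\tilde K}}))\mathbf{1}_{B_{\tau_{\tilde K}}\in\mathbb{H}\setminus K}]$, bound $y\,\mathbb{P}^{iy}[B_{\tau_{\tilde K}}\in\partial\tilde K\cap\mathbb{H}]\le C_1(R)$ by comparison with the half-disk, and bound the integrand by $C_2(R)\sqrt{\epsilon}$ via Beurling. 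Two small points to tighten. First, your justification ``since $\tilde K\setminus K\subset K^\epsilon\setminus K$'' is false in general: filling can add bounded complementary components whose points are far from $K$ (e.g.\ $K$ three sides of a square with a sub-$\epsilon$ gap). What saves you is that you only evaluate $\mathrm{Im}(g_K)$ at $w=B_{\tau_{\tilde K}}\in\partial\tilde K\cap\mathbb{H}$, and $\partial\,\mathrm{fill}(K^\epsilon)\subset\partial K^\epsilon\subset\overline{K^\epsilon}$, so these boundary points genuinely lie within $\epsilon$ of $K$; state it that way. Second, for Beurling you should note that the obstacle $\overline K\cup\mathbb{R}$ is connected (every component of $\overline K$ meets $\mathbb{R}$, else $\mathbb{H}\setminus K$ would not be simply connected) and joins $\partial B_\epsilon(w)$ to $\partial B_{cR}(w)$, and that the vanishing of $\mathrm{Im}(g_K)$ on $\partial K$ need only hold at regular boundary points (irregular points are polar), which the Brownian representation handles automatically. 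With those remarks the argument is complete, with $\alpha=1/2$.
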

\noindent The actual statement in \cite{Kemp} is slightly stronger, using $\hcap(\fil(K^\epsilon))$ instead of $\hcap(K^\epsilon)$, but \eqref{Ineq:hcapHolder} suffices for our purposes (while $K^\epsilon$ may not have simply-connected complement, its half-plane capacity is still well defined through the probabilistic definition of $\hcap$ in  \eqref{Def:hCapProbability}.) Recall that the Hausdorff distance between closed sets $E,F$ is 
\begin{align*}
    d_h(E,F) := \inf\Big\{\, \epsilon >0 \; | \; E \subset \bigcup_{z \in F} \overline{B_\epsilon(z)} \quad \text{and} \quad F \subset \bigcup_{z \in E} \overline{B_\epsilon(z)} \,\Big\}.
\end{align*}
Proposition \ref{Prop:hcapUniform1} immediately yields the following.
\begin{corollary}\label{Prop:hcapHolder}
    Fix $R>0$.  The half-plane capacity, as a map from the compact $\mathbb{H}$-hulls inside $\overline{B_R(0)}$ equipped with $d_h(\cdot,\cdot)$ to the reals, is \Hol continuous.
\end{corollary}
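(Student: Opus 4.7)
The plan is to reduce the Hölder continuity of $\hcap$ in the Hausdorff metric directly to Proposition \ref{Prop:hcapUniform1} via the monotonicity of half-plane capacity, applied symmetrically. Fix $K_1, K_2$ compact $\mathbb{H}$-hulls contained in $\overline{B_R(0)}$, and let $\epsilon := d_h(K_1,K_2)$. By the definition of Hausdorff distance, $K_1 \subset K_2^\epsilon$ and $K_2 \subset K_1^\epsilon$, where the $\epsilon$-neighborhoods are as in \eqref{Def:EpsilonNeighborhood}.

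Next I would invoke monotonicity of $\hcap$ (immediate from the probabilistic definition \eqref{Def:hCapProbability}, since enlarging the hull can only increase the expected imaginary part of the exit location from below). This yields $\hcap(K_1) \leq \hcap(K_2^\epsilon)$ and $\hcap(K_2) \leq \hcap(K_1^\epsilon)$. I then want to check the hypothesis of Proposition \ref{Prop:hcapUniform1} for both $K_i$ with the common center $x=0$: since $K_i \subset \overline{B_R(0)}$, the neighborhood $K_i^\epsilon$ lies in $\overline{B_{R+\epsilon}(0)}$, and as the unbounded component of $(K_i^\epsilon)^c$ contains all of $\mathbb{H} \setminus \overline{B_{R+\epsilon}(0)}$, we also have $\fil(K_i^\epsilon) \subset \overline{B_{R+\epsilon}(0)} \subset B_{R+1}(0)$ whenever $\epsilon < 1$. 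Hence Proposition \ref{Prop:hcapUniform1}, with constant $C = C(R+1)$, gives
\begin{equation*}
    \hcap(K_i^\epsilon) - \hcap(K_i) \leq C\epsilon^\alpha, \qquad i=1,2.
\end{equation*}

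Combining these estimates sandwiches $\hcap(K_1) - \hcap(K_2)$ and its negative between $\pm C\epsilon^\alpha$, yielding $|\hcap(K_1) - \hcap(K_2)| \leq C\epsilon^\alpha$ for all $\epsilon < 1$. For the (trivial) case $\epsilon \geq 1$, I would just use the scaling and monotonicity of $\hcap$ along with \eqref{Eq:Diskhcap} to bound each $\hcap(K_i) \leq \hcap(\overline{B_R(0)} \cap \mathbb{H}) = R^2$, and then absorb this into a larger overall constant so that the Hölder bound holds uniformly in $\epsilon > 0$.

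I do not expect a substantial obstacle here; the only technical point worth being careful about is verifying that Proposition \ref{Prop:hcapUniform1} applies uniformly in $K_1, K_2$, which boils down to the simple observation that a single radius $R+1$ and a single center $x=0$ work for all hulls in $\overline{B_R(0)}$ (so the constant $C = C(R+1)$ is genuinely uniform). Monotonicity of $\hcap$ under inclusion, stated in the excerpt right after \eqref{Eq:UpwardsMapsInfty1}, obviates any need to worry about whether the enlargements $K_i^\epsilon$ have simply-connected complement, since $\hcap$ is defined for arbitrary compact subsets via \eqref{Def:hCapProbability}.
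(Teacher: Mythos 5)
Your proposal is correct and follows essentially the same route as the paper: both reduce Hölder continuity in $d_h$ to Proposition \ref{Prop:hcapUniform1} via the Hausdorff containments $K_1 \subset K_2^\epsilon$, $K_2 \subset K_1^\epsilon$ and monotonicity of $\hcap$, using the probabilistic definition \eqref{Def:hCapProbability} to make sense of $\hcap(K_i^\epsilon)$. The only difference is bookkeeping of constants: you split off the case $\epsilon \geq 1$ and use $C(R+1)$, while the paper simply notes $d_h(K_1,K_2) \leq 2R$ and takes $C = C(4R)$ uniformly.
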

\begin{proof}
    For compact $\mathbb{H}$-hulls $K_1, K_2 \subset \overline{B_R(0)}$, write $d:= d_h(K_1,K_2) \leq 2R$.  Using the superscript notation of  \eqref{Def:EpsilonNeighborhood}, we see by $\hcap$  monotonicity and \eqref{Ineq:hcapHolder} that
    \begin{align*}
        |\hcap(K_1) - \hcap(K_2)| &\leq |\hcap(K_1) - \hcap(K_1^d)| + |\hcap(K_2^{2d})-\hcap(K_2)|\\
        &\leq (C +C2^\alpha)d^\alpha ,
    \end{align*}
    where $C=C(4R)$ from Proposition \ref{Prop:hcapUniform1}.
\end{proof}
\noindent We immediately obtain the following, which is the form of the continuity result we will use.
\begin{corollary}\label{Cor:hcapContinuousCurves}
    Let $\xi_n \in S([0,T_n])$ and $\xi \in S([0,T])$ be drivers generating half-plane-capacity parametrized curves $\gamma_n:[0,T_n] \rightarrow \mathbb{H} \cup \{\xi_n(T_n)\}$ and $\gamma:[0,T] \rightarrow \mathbb{H} \cup \{\xi(T)\}$, respectively.  If $\gamma_n$ converges uniformly to $\gamma$ in some parametrization, which is to say
    \begin{align*}
        \| \gamma_n \circ \sigma_n - \gamma \circ \sigma \|_{\infty[0,A]} \rightarrow 0
    \end{align*}
    for some increasing, continuous functions $\sigma_n: [0,A] \rightarrow [0,T_n]$, $\sigma:[0,A] \rightarrow [0,T]$, then $\hcap(\gamma_n) \rightarrow \hcap(\gamma)$.
\end{corollary}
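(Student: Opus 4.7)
The plan is to convert the uniform closeness of the reparametrized curves into Hausdorff closeness of their image sets, and then to invoke Corollary \ref{Prop:hcapHolder}. Let $K_n := \gamma_n([0,T_n])$ and $K := \gamma([0,T])$. Since $\xi_n, \xi \in S$, each $\gamma_n$ and $\gamma$ is a simple arc meeting $\mathbb{R}$ only at its base, so $\mathbb{H} \setminus K_n$ and $\mathbb{H} \setminus K$ are simply connected, making $K_n$ and $K$ compact $\mathbb{H}$-hulls. Because the curves are parametrized by half-plane capacity, $\hcap(K_n) = 2T_n$ and $\hcap(K) = 2T$, so it suffices to control $|\hcap(K_n) - \hcap(K)|$ via a Hausdorff estimate.

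Interpreting the parametrizing functions $\sigma_n$ and $\sigma$ as continuous surjections onto $[0,T_n]$ and $[0,T]$ respectively (the natural reading of ``some parametrization''), we have $\gamma_n \circ \sigma_n([0,A]) = K_n$ and $\gamma \circ \sigma([0,A]) = K$. Setting $\epsilon_n := \|\gamma_n \circ \sigma_n - \gamma \circ \sigma\|_{\infty[0,A]}$, for any $z = \gamma_n(\sigma_n(t)) \in K_n$ the point $w := \gamma(\sigma(t)) \in K$ satisfies $|z-w| \leq \epsilon_n$, and the symmetric statement holds starting from $w \in K$. Hence
\begin{align*}
    d_h(K_n, K) \leq \epsilon_n \to 0.
\end{align*}

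To close the argument, I would fix $R > 0$ with $K \subset \overline{B_{R/2}(0)}$ (possible as $K$ is compact); the Hausdorff convergence then forces $K_n \subset \overline{B_R(0)}$ for all sufficiently large $n$. Applying Corollary \ref{Prop:hcapHolder} with this $R$ gives
\begin{align*}
    |\hcap(K_n) - \hcap(K)| \leq C(R)\, d_h(K_n, K)^\alpha \to 0,
\end{align*}
as desired. I do not foresee a substantive obstacle: the only subtlety is the (essentially cosmetic) matter of checking that the parametrizations $\sigma_n, \sigma$ sweep out the full curves, after which the result is a direct consequence of the Hausdorff--H\"{o}lder continuity of $\hcap$ recorded in Corollary \ref{Prop:hcapHolder}.
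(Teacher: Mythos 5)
Your proposal is correct and is exactly the argument the paper has in mind: the paper states this corollary as an immediate consequence of Corollary \ref{Prop:hcapHolder}, and your write-up simply fills in the routine details (uniform closeness of the reparametrized curves bounds $d_h(K_n,K)$, the hulls eventually lie in a fixed ball, then apply the H\"{o}lder estimate). The only cosmetic point is that the hulls should be taken as $\gamma_n([0,T_n])\cap\mathbb{H}$ so they are subsets of $\mathbb{H}$, which changes nothing in the estimate.
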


We proceed with the proof of Theorem \ref{Cor:Zipper}.  In \cite[Lemma 4.1]{MesikeppEnergy}, the second author proved the existence of Loewner-energy minimizers for a single pair $(x_1,y_1)$, also using Theorem \ref{Thm:DriverToWeldingConvergence}.  We extend the idea for the existence of $\gamma_{\mathcal{P}}$, repeating some details for the convenience of the reader.
\begin{proof}
    $(i)$ We first observe that the set
    \begin{align*}
        D(\mathcal{P},C) := \{\, \xi \in S_0 \; : \; I_L(\xi) \leq C, \gamma^\xi \text{ welds }\mathcal{P} \,\}
    \end{align*}
    is non-empty for sufficiently-large $C$.  This is because one can repeatedly map up with conformal maps to the complement of circular arc segments orthogonal to $\mathbb{R}$, welding two points at a time to the base of the arc, to obtain a simple curve with $\tilde{\varphi}$ which welds $\mathcal{P}$, and driven by some $\tilde{\xi}$.  Each circular arc segment has finite energy, and so $I_L(\tilde{\xi}) <\infty$ because there are finitely-many pairs $(x_j,y_j)$.\footnote{This iterative construction is an example of the welding zipper algorithm.  See \cite[Lemma 5.7]{Mesikepp} for details on the finiteness of energy of the circular arcs, such as an explicit energy formula.}  Next, take a sequence $\xi_n \in D_0 := D(\mathcal{P}, I_L(\tilde{\xi}))$, such that 
    \begin{align*}
        \lim_{n \rightarrow \infty} I_L(\xi_n) = \inf\{\, I_L(\eta) \; : \; \eta \in D_0 \,\}.
    \end{align*}
    We claim we may suppose all the $\xi_n$ are defined on a universal interval $[0,T_U]$ of capacity time.  Indeed, since the diameter of any curve $\tilde{\gamma}$ welding $-a$ to $b$ is comparable to $b+a$ \cite[top of p.74]{Lawler}, and
    \begin{multline*}
        \hcap(\tilde{\gamma}) \leq \hcap\big( \diam(\tilde{\gamma})\overline{B_1(0)}\cap \mathbb{H} \big)\\ \leq C^2(a+b)^2\hcap\big(\overline{B_1(0)} \cap \mathbb{H} \big) \leq C^2(a+b)^2
    \end{multline*}
    by scaling and monotonicity of $\hcap(\cdot)$ and \eqref{Eq:Diskhcap}, the times $T_n$ for $\xi_n$ to weld $-a$ to $b$ are all bounded.  Furthermore, we can extend any $\xi_n$ past $T_n$ by the constant function $\xi_n(T_n)$ without adding energy, and thus some such $T_U$ exists.
    
    Since $\{I_L(\xi_n)\}$ is bounded, by \eqref{Eq:LoewnerEnergy} and H\"{o}lder's inequality, $\{\xi_n\}$ is bounded and equicontinuous on $[0,T_U]$ and thus precompact.  If $\xi_{n_k} \rightarrow \xi'$ is any subsequential uniform limit, by the lower-semicontinuity of the energy in this topology \cite[\S2.2]{WangReverse},
    \begin{align*}
        I_L(\xi') \leq \liminf_{k \rightarrow \infty} I_L(\xi_{n_k}) = \inf\{\, I_L(\eta) \; : \; \eta \in D_0 \,\}, 
    \end{align*}
    and so $\xi'$ is a minimizer so long as it belongs to $D_0$.  That is, we must show $\xi' \in S_0$ and that $\gamma^{\xi'}$ welds $\mathcal{P}$.  The first property is immediate since $I_L(\xi)< \infty$; in fact, $\gamma^{\xi'}$ is a $K$-quasiarc for some $K = K(I_L(\xi'))$ \cite[Prop. 2.1]{WangReverse}. We claim $\gamma^{\xi'}$ welds $\mathcal{P}$ by Theorem \ref{Thm:DriverToWeldingConvergence}.  Indeed, by extending $\xi'$ past $T_U$ to $[0,T_U']$ with the constant value $\xi'(T_U)$ on $[T_U,T_U']$, if necessary, we may assume that $[x_N, y_N]$ is in the interior of the interval welded by $\xi'$.  Similarly extending all the $\xi_{n_k}$ on $[T_U,T_U']$ to be constantly their terminal value $\xi_{n_k}(T_U)$, we still have uniform convergence on $[0,T_U']$, and Theorem \ref{Thm:DriverToWeldingConvergence} then yields
    \begin{align*}
        y_j = \varphi_{n_k}(x_j) \rightarrow \tilde{\varphi}(x_j)
    \end{align*}
    for each $j$, where $\varphi_{n_k}$ and $\varphi'$ are the weldings associated to $\xi_{n_k}$ and $\xi'$, respectively.  Thus $\xi' \in D_0$ and minimizers $\gamma_{\mathcal{P}} = \gamma^{\xi'}$ exist among all curves welding $\mathcal{P}$.
    
    \bigskip
    \noindent $(ii)$ Now suppose we have a sequence of partitions $\mathcal{P}_n = \{(x_j,y_j) \}_{j=0}^{N_n}$ with $|\mathcal{P}_n| \rightarrow 0$, and let $\gamma_n$ a minimizer for welding $\mathcal{P}_n$ with upwards driver $\xi_n \in S_0([0,T_n])$, which welds $-a$ to $b$ at time $T_n$.  By extending the drivers by the ending values, we may again assume there is a single interval $[0,T_U]$ on which all the $\xi_n$ are defined, and furthermore that we have $\epsilon>0$ such that each $\xi_n$ welds an interval including \begin{align}\label{Int:bigger}
        [-a-\epsilon,b+\epsilon]
    \end{align}
    by time $T_U$.  
    
    We show $\xi_n \rightarrow \xi$ on $[0,T_U]$ (we have also extended $\xi$ by constantly $\xi(T)$, if needed).  First note that the weldings $\varphi_n$ for $\xi_n$ converge uniformly to the welding $\varphi$ for $\xi$, which is clear from monotonicity and the agreement on $\mathcal{P}_n$ with $|\mathcal{P}_n| \rightarrow 0$.  As above, uniformly-bounded energy implies $\{\xi_n\}$ is precompact, and taking any uniform limit $\xi_{n_k} \rightarrow \xi'$ on $[0,T_U]$ we have that
    \begin{align*}
        I_L(\xi') \leq \liminf_{k \rightarrow \infty}I_L(\xi_{n_k}) \leq I_L(\xi),
    \end{align*}
    and so $\xi'$ has finite energy and generates a simple curve $\gamma'$, as noted above.\footnote{Of course, ``prime'' here is notation and has nothing to do with derivative.}  In particular, we find 
    \begin{align*}
        \max\{\,\tau(-a-\epsilon/2;\xi'), \tau(b+\epsilon/2;\xi')\,\} < T_U
    \end{align*}
    by \eqref{Int:bigger} and Theorem \ref{Cor:TimesPointwise}, and so by Theorem \ref{Thm:DriverToWeldingConvergence}, $\xi'$ welds identically to $\xi$ on $[-a,b]$.  By injectivity of the welding-to-curve map in the category of quasiarcs, the curves $\xi'$ and $\xi$ generate by welding $[-a,0]$ to $[0,b]$ are the same, up to post-composition by affine map $z \mapsto cz+d$ for some $c,d \in \mathbb{R}$.  However, since both $\gamma'$ and $\gamma$ are normalized by the Loewner flow, we have $c=1,d=0$ and consequently that $\xi' \equiv \xi$ on $[0,T_U]$.  
    
    All subsequential limits of $\{\xi_n\}$ are therefore $\xi$, and we conclude $\xi_n \xrightarrow{u} \xi$ on $[0,T_U]$.  In particular, convergence of hitting times yields
    \begin{align}\label{Conv:ConstructedTimesToTrue}
        T_n = \tau(-a; \xi_n) \rightarrow \tau(-a; \xi) = T
    \end{align}
    and hence, by uniformity also yields
    \begin{align}\label{Conv:CurveBases}
        \xi_n(T_n) \rightarrow \xi(T).
    \end{align}
    
    It remains to show that the curves $\gamma_n$ generated by $\xi_n$ converge to $\gamma$ on any $[0,T']$. Recall that since the inverse Loewner transform $\eta \mapsto \gamma^{\eta}$ is not continuous from $S_0([0,T_U])$ to $C([0,T_U])$ \cite[Ex. 4.49]{Lawler}, this is not immediate.  We start by rescaling the minimizers to all the have the same capacity time $T$, setting $\alpha_n := \sqrt{T/T_n}$ and $\tilde{\gamma}_n := \alpha_n\gamma_n$, and show that the $\tilde{\gamma}_n$ converge uniformly in their half-plane capacity parametrizations to $\gamma$ on $[0,T]$.  Note that the $\tilde{\gamma}_n$ have driving functions $\tilde{\xi}_n(\cdot) = \alpha_n \xi(\cdot/\alpha_n^2)$ which satisfy
    \begin{align*}
        |\tilde{\xi}_n(t)-\xi(t)| \leq& \sqrt{\frac{T}{T_n}} \Big| \xi_n\Big( t \, \frac{T_n}{T} \Big) - \xi\Big( t \, \frac{T_n}{T} \Big) \Big| + \Big| \sqrt{\frac{T}{T_n}}\xi\Big( t \, \frac{T_n}{T} \Big) - \xi(t)\Big|,
    \end{align*}
    where we are using the extension of $\xi$ to $[0,T_U]$ as needed.  For large $n$, this is small by \eqref{Conv:ConstructedTimesToTrue}, the uniform continuity of $\xi$ on $[0,T_U]$, and the convergence $\xi_n \xrightarrow{u} \xi$, showing
    \begin{align}\label{Conv:ScaledDrivers}
        \|\tilde{\xi}_n - \xi\|_{\infty[0,T]} \rightarrow 0,
    \end{align}
    which we will use this to prove 
    \begin{align}\label{Conv:GoalCurveConvergence}
        \|\tilde{\gamma}_n - \gamma\|_{\infty[0,T]} \rightarrow 0.
    \end{align}
    
    To begin with, note that $\{ \tilde{\gamma}_n \}$ is a bounded sequence both in diameter and Loewner energy (the former by \cite[top of p.74]{Lawler} again).  By Proposition \ref{Prop:Holder1/2}, the latter implies it is also equicontinuous in capacity parametrization, and thus a precompact family.  Taking any uniform subsequential limit $\tilde{\gamma}_{n_k} \rightarrow \tilde{\gamma}$ on $[0,T]$, we show that $\tilde{\gamma}$ is the half-plane capacity parametrization for $\gamma$, which is to say,
    \begin{align}\label{Eq:GoalSubsequentialCurveConvergence}
        \tilde{\gamma}([0,t])=\gamma([0,t])
    \end{align}
    for each $0 \leq t \leq T$.  As we are considering an arbitrary subsequential limit $\tilde{\gamma}$, if \eqref{Eq:GoalSubsequentialCurveConvergence} holds, so does \eqref{Conv:GoalCurveConvergence}.
    
    We first show \eqref{Eq:GoalSubsequentialCurveConvergence} for $t=T$.  Since the $\tilde{\gamma}_{n}$ are uniformly $K$-quasiarcs, we can write $\tilde{\gamma}_{n} = q_{n}([0,i])$ for some $K$-quasiconformal self-map $q_{n}$ of $\mathbb{H}$ that fixes $\infty$, sends $0$ to the base of curve $\tilde{\xi}_n(T)$, and $i$ to its tip $\tilde{\gamma}_{n}(T)$ \cite[Prop. 2.1]{WangReverse}.  We claim that
    \begin{align}\label{Ineq:PointsSeparated}
        |q_{n}(i) - q_{n}(0)| = |\tilde{\gamma}_n(T) - \tilde{\gamma}_n(0)|\geq \epsilon>0
    \end{align}
    for all $n$.  If so, then $\{q_n\}$ is a normal family, as then the points $\{0,i,\infty\}$ have images under the $q_n$ which are uniformly bounded below in spherical distance (we extend the $q_n$ by reflection $q_n(\bar{z}) = \overline{q_n(z)}$ to obtain quasiconformal mappings of the sphere $\hat{\mathbb{C}}$ and apply \cite[Thm. 2.1]{LehtoQC}).  Recall that if $\tilde{\gamma}_n \subset \overline{B_{\tilde{\gamma}_n(0)}(R)}\cap \mathbb{H}$, then $2T = \hcap(\tilde{\gamma}_n) \leq R^2$, and so there exists a point $\tilde{\gamma}_n(t_n)$ on $\tilde{\gamma}_n$ such that 
    \begin{align}\label{Ineq:PtFarAway}
        \sqrt{T} \leq |\tilde{\gamma}_n(t_n) - \tilde{\gamma}_n(0)|,
    \end{align}
    say.  As the $\tilde{\gamma}_n$ are uniformly $K$-quasiarcs, by Ahlfors' three point condition we have some $C=C(K)$ such that
    \begin{align*}
       |\tilde{\gamma}_n(0) -\tilde{\gamma}_n(t_n)| \leq C|\tilde{\gamma}_n(0) - \tilde{\gamma}_n(T)|.
    \end{align*}
    Combined with \eqref{Ineq:PtFarAway}, we therefore have \eqref{Ineq:PointsSeparated} and consequently normality of $\{q_n\}$.
    
    In particular, for $\tilde{\gamma}_{n_k} = q_{n_k}([0,i])$, we may move to a further subsequence of $\{q_{n_k}\}$, which we also label the same, and obtain a locally-uniform limit $\tilde{q}$, which is either a point in $\mathbb{R} \cup \{\infty\}$ or a $K$-quasiconformal homeomorphism $\mathbb{H}$ \cite[Thm. 2.3]{LehtoQC}.  To see that the former cannot happen, recall by \cite[Prop. 3.1]{WangReverse} that $I_L(\tilde{\gamma}_n) \geq -8\log(\sin(\theta_n))$, where $\theta_n = \arg(\tilde{\gamma}_n(t_n) - \tilde{\gamma}_n(0))$, and so $\theta_n$ is uniformly bounded away from 0 and $\pi$.  Combined with \eqref{Ineq:PtFarAway} and boundedness of $\{\tilde{\gamma}_n\}$, we conclude that the $q_{n_k}$ cannot degenerate to any $x \in \mathbb{R}$ or blow up to $\infty$, and hence that the $\tilde{\gamma}_{n_{k}}$ have a limiting Jordan curve $\tilde{\Gamma}:=q([0,i])$, where the limit is uniform in the parametrization given by the quasiconformal maps.  
    
    In particular, $\hcap(\tilde{\Gamma}) = 2T$ by Corollary \ref{Cor:hcapContinuousCurves}.  For any $0 \leq t \leq T$, let $y_t \in [0,1]$ and $t_{n_k} \in [0,T]$ be such that $q([0,iy_t]) = \tilde{\Gamma}([0,t])$ and $\tilde{\gamma}_{n_k}(t_{n_k})=q_{n_k}(iy_t)$.  We then have the point-wise convergence
    \begin{align*}
        \tilde{\gamma}_{n_k}(t_{n_k}) \rightarrow \tilde{\Gamma}(t),
    \end{align*}
    and since $q_{n_k} \xrightarrow{u} q$ on $[0,iy_t]$, by Corollary \ref{Cor:hcapContinuousCurves} again we conclude $t_{n_k} \rightarrow t$, and thus that
    \begin{align*}
        \tilde{\gamma}_{n_k}(t) \rightarrow \tilde{\Gamma}(t)
    \end{align*}
    by Proposition \ref{Prop:Holder1/2}.  Hence our subsequential limit $\tilde{\gamma} = \tilde{\Gamma}$ is a simple curve.
    
    By \eqref{Conv:ScaledDrivers} we have that the downwards drivers $\tilde{\lambda}_{n_k}(t) = \tilde{\xi}_{n_k}(T-t) - \tilde{\xi}_{n_k}(T)$ for the centered curves $\tilde{\gamma}_{n_k} - \tilde{\xi}_{n_k}(0)$ converge uniformly on $[0,T]$ to $\lambda(t) = \xi(T-t) - \xi(T)$, the downwards driver for $\gamma-\xi(T)$.  Proposition \ref{Prop:LMR} then yields $\tilde{\gamma} = \gamma$, showing that all subsequential limits are the same, and completing the proof of \eqref{Conv:GoalCurveConvergence}.
    
    Note \eqref{Lim:CurvesConverge} is an immediate consequence: $\gamma_n$ is defined on $[0,T'] \subset [0,T)$ for large $n$ by \eqref{Conv:ConstructedTimesToTrue}, and for $0 \leq t \leq T'$ we observe
    \begin{align*}
        |\gamma_n(t) - \gamma(t)| &\leq \Big|\gamma_n(t) - \sqrt{\frac{T}{T_n}}\gamma_n\Big(t \frac{T_n}{T} \Big)  \Big| + \|\tilde{\gamma}_n-\gamma\|_{\infty[0,T]}\\
        &\leq B\sqrt{|T-T_n|} + \|\tilde{\gamma}_n-\gamma\|_{\infty[0,T]},
    \end{align*}
    where the second line follows from \eqref{Conv:ConstructedTimesToTrue}, uniform boundedness of $\{\gamma_n\}$, and Proposition \ref{Prop:Holder1/2}.  We conclude \eqref{Lim:CurvesConverge} holds.
    
    We lastly turn to the energy limit \eqref{Lim:EnergiesIncrease}.  By minimization,
    \begin{align*}
        I_L(\gamma_n) = \frac{1}{2}\int_0^{T_n} \dot{\xi}_n^2(t) dt \leq \frac{1}{2}\int_0^{T} \dot{\xi}^2(t) dt = I_L(\gamma)
    \end{align*}
    for all $n$, and so $\limsup_{n \rightarrow \infty} I_L(\gamma_n) \leq I_L(\gamma)$.  Write $I_{L,t}(\cdot)$ for the Loewner energy on an interval $[0,t]$.  Since our extended drivers satisfy $\xi_n \xrightarrow{u} \xi$ on $[0,T_U]$, by energy lower semicontinuity \cite[\S2.2]{WangReverse},
    \begin{align*}
        I_{L,T}(\gamma) = I_{L,T_U}(\xi) \leq \liminf_{n \rightarrow \infty} I_{L, T_U}(\xi_n) = \liminf_{n \rightarrow \infty} I_{L, T_n}(\xi_n) \leq I_{L,T}(\gamma),
    \end{align*}
    where we recall that extending the drivers did not add any energy.  Thus $\limsup I_L(\gamma_n) \leq \liminf I_L(\gamma_n)$ and the claimed limit follows.  If the partitions are nested, $\gamma_{n+1}$ also welds $\mathcal{P}_n$, showing $I_L(\gamma_n) \leq I_L(\gamma_{n+1})$.
\end{proof}

\section{Problems}\label{Sec:Problems}
We close with two problems that appear natural from our study of drivers, weldings, and hitting times.

\begin{problem}
      Suppose weldings $\varphi_n, \varphi$ correspond to drivers $\xi_n, \xi \in S_0$.  If all the weldings share a fixed modulus of continuity $\omega$, does $\varphi_n \xrightarrow{u} \varphi$ imply $\xi_n \xrightarrow{u} \xi$?
\end{problem}

\begin{problem}
      Determine if $\xi \mapsto \tau$ is injective on a larger subcollection $S' \subset S$ (with equality, perhaps), beyond just constant drivers.  Is the map a homeomorphism onto its image of $S'$?
\end{problem}


\section{Appendix: Integral formulas relating $\xi, \tau$ and $\varphi$}\label{Appendix:Formulas}

\begin{lemma}
Let $\xi$ be an upwards driver generating a simple curve, and let $x_0$ be a point with $\tau(x_0;\xi)<\infty$.  The hitting time $\tau := \tau(x_0;\xi)$ satisfies
\begin{align}\label{Eq:HittingTimeIntegral1}
    \tau = \frac{1}{4}(x_0^2-x(\tau)^2)-\int_0^{\tau}\frac{\xi(t)}{x(t)-\xi(t)}dt.
\end{align}
Furthermore, if $y_0 = \varphi(x_0)$ is the image of $x_0$ under the conformal welding $\varphi$ generated by $\xi$, then 
\begin{align}\label{Eq:HittingTimeIntegral2}
    \varphi(x_0)^2 - x_0^2 = 4\int_0^\tau \frac{\xi(t)(y(t)-x(t))}{(\xi(t)-x(t))(y(t)-\xi(t))}dt.
\end{align}
\end{lemma}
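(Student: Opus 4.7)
The plan is to derive \eqref{Eq:HittingTimeIntegral1} directly from the Loewner equation by computing $\frac{d}{dt}x(t)^2$, and then obtain \eqref{Eq:HittingTimeIntegral2} as a corollary by applying the first formula to both $x_0$ and $y_0=\varphi(x_0)$ and subtracting.

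For the first formula, I would start from \eqref{Eq:UpwardsLoewnerx} and write
\begin{align*}
    \frac{d}{dt}\bigl(x(t)^2\bigr) = 2x(t)\dot x(t) = \frac{-4x(t)}{x(t)-\xi(t)} = -4 - \frac{4\xi(t)}{x(t)-\xi(t)},
\end{align*}
where in the last step I split $x(t) = (x(t)-\xi(t)) + \xi(t)$ in the numerator. Integrating over $[0,s]$ for any $s<\tau$ (where the flow is smooth) gives
\begin{align*}
    x(s)^2 - x_0^2 = -4s - 4\int_0^s \frac{\xi(t)}{x(t)-\xi(t)}\,dt.
\end{align*}
Now I let $s \nearrow \tau$. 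By the extension of $t \mapsto x(t)$ to $[0,\tau]$ discussed in \S\ref{Sec:Prelim} (with $x(\tau) = \xi(\tau)$), the left-hand side converges to $x(\tau)^2 - x_0^2$. Consequently the integral on the right has a finite limit, which we take as the definition of $\int_0^\tau \frac{\xi(t)}{x(t)-\xi(t)}\,dt$, and rearranging yields \eqref{Eq:HittingTimeIntegral1}.

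For \eqref{Eq:HittingTimeIntegral2}, I apply \eqref{Eq:HittingTimeIntegral1} separately to $x_0$ and to $y_0=\varphi(x_0)$. Since both share the hitting time $\tau$ and $x(\tau) = y(\tau) = \xi(\tau)$, subtracting the two identities cancels the $\tau$ and the terminal terms, leaving
\begin{align*}
    0 = \frac{x_0^2 - y_0^2}{4} - \int_0^\tau \xi(t)\left[\frac{1}{x(t)-\xi(t)} - \frac{1}{y(t)-\xi(t)}\right] dt.
\end{align*}
Combining the fractions and using $x(t) - \xi(t) < 0 < y(t)-\xi(t)$ to flip the sign in the first factor gives
\begin{align*}
    \frac{1}{x(t)-\xi(t)} - \frac{1}{y(t)-\xi(t)} = \frac{-(y(t)-x(t))}{(\xi(t)-x(t))(y(t)-\xi(t))},
\end{align*}
and a short rearrangement produces \eqref{Eq:HittingTimeIntegral2}.

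The only real subtlety is the convergence of $\int_0^s \frac{\xi(t)}{x(t)-\xi(t)}\,dt$ as $s\nearrow\tau$, since the integrand blows up at the hitting time. This is handled \emph{a posteriori}: the identity at pre-limit time $s$ expresses the integral as a linear combination of $s$ and $x(s)^2-x_0^2$, both of which have finite limits, so the integral must as well. No direct estimate on the rate at which $x(t)-\xi(t)$ vanishes is needed.
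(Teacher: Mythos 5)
Your proposal is correct and follows essentially the same route as the paper: differentiate $x(t)^2$ via the Loewner equation, integrate up to $\tau-\epsilon$ (equivalently to $s<\tau$), use continuity of $t\mapsto x(t)$ up to $\tau$ to pass to the limit (which also yields convergence of the improper integral), and then obtain \eqref{Eq:HittingTimeIntegral2} by applying \eqref{Eq:HittingTimeIntegral1} to both $x_0$ and $y_0$ and using $x(\tau)=y(\tau)=\xi(\tau)$. The only cosmetic difference is that you split $x(t)=(x(t)-\xi(t))+\xi(t)$ before integrating while the paper splits afterwards, and you make the \emph{a posteriori} convergence of the integral explicit where the paper leaves it implicit.
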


\begin{proof}
    If $x_0 = \xi(0)$ the first formula is obvious.  If not, we observe that $\partial_t (x(t)^2) = -4x(t)/(x(t)-\xi(t))$, yielding
    \begin{align*}
        x(\tau-\epsilon)^2 - x_0^2 = -4 \int_0^{\tau-\epsilon} \frac{x(t)}{x(t)-\xi(t)}dt.
    \end{align*}
    Since $t \mapsto x(t)$ is continuous up to $t=\tau$, sending $\epsilon \rightarrow 0$ gives
    \begin{equation*}
        x(\tau)^2 - x_0^2 = -4 \int_0^{\tau} \frac{x(t)}{x(t)-\xi(t)}dt = -4 \tau - 4 \int_0^\tau \frac{\xi(t)}{x(t)-\xi(t)}dt,
    \end{equation*}
    which yields \eqref{Eq:HittingTimeIntegral1}.
    
    Secondly, since $x(\tau) = y(\tau) = \xi(\tau)$, using \eqref{Eq:HittingTimeIntegral1} for both $x_0$ and $y_0$ and equating the right-hand sides yields
    \begin{align*}
        x_0^2 - 4\int_0^\tau \frac{\xi(t)}{x(t)-\xi(t)}dt =  y_0^2 - 4\int_0^\tau \frac{\xi(t)}{y(t)-\xi(t)}dt,
    \end{align*}
    which is equivalent to \eqref{Eq:HittingTimeIntegral2}.
\end{proof}

\bibliographystyle{plain}
\bibliography{bib}

\end{document}